\newtheorem{theorem}{Theorem}[section]
\newtheorem{lemma}{Lemma}[section]
\newtheorem{algorithm}{Algorithm}[section]
\newtheorem{remark}{Remark}[section]
\renewcommand{\section}{
         \setcounter{equation}{0}
         \@startsection {section}{1}{\z@}{-3.5ex plus -1ex minus
         -.2ex}{2.3ex plus .2ex}{\normalsize\bf}
}
\renewcommand{\subsection}{
         \@startsection {subsection}{1}{\z@}{-3.5ex plus -1ex minus
         -.2ex}{2.3ex plus .2ex}{\normalsize\bf}
} \catcode`\@=12
\def\reals{{\rm\vrule depth0ex width.4pt\kern-.08em R}}
\def\bbbz{{\mathchoice {\hbox{$\sf\textstyle Z\kern-0.4em Z$}}
{\hbox{$\sf\textstyle Z\kern-0.4em Z$}} {\hbox{$\sf\scriptstyle
Z\kern-0.3em Z$}} {\hbox{$\sf\scriptscriptstyle Z\kern-0.2em Z$}}}}
\newcommand{\nc}{\newcommand}
\nc{\W}{{\bf W}} \nc{\A}{{\bf A}} \nc{\bL}{{\bf L}} \nc{\bH}{{\bf
H}} \nc{\C}{{\cal C}}
\def\eq#1{(\ref{e:#1})}
\def\elabel#1{\label{e:#1}}
\begin{document}
\begin{center}
\Large\bf Numerical Methods and Analysis via Random Field Based
Malliavin Calculus for Backward Stochastic PDEs~\footnote{This paper
was presented in 2013 IMA Theory and Applications of Stochastic PDEs
from January 13-18 of 2013 in Minneapolis, Minnesota, U.S.A. and
will be presented as an invited talk in IMS-China 2013 from June 30
to July 4 of 2013 in Chengdu, China.}
\end{center}
\begin{center}
\large\bf Wanyang Dai~\footnote{Supported by National Natural
Science Foundation of China under grant No. 10971249.}
\end{center}
\begin{center}
\small Department of Mathematics and State Key Laboratory of Novel
Software Technology\\
Nanjing University, Nanjing 210093, China\\
Email: nan5lu8@netra.nju.edu.cn\\
Date: 28 June 2013
\end{center}

\vskip 0.1 in
\begin{abstract}
We study the adapted solution, numerical methods, and related
convergence analysis for a unified backward stochastic partial
differential equation (B-SPDE). The equation is vector-valued, whose
drift and diffusion coefficients may involve nonlinear and
high-order partial differential operators. Under certain generalized
Lipschitz and linear growth conditions, the existence and uniqueness
of adapted solution to the B-SPDE are justified. The methods are
based on completely discrete schemes in terms of both time and
space. The analysis concerning error estimation or rate of
convergence of the methods is conducted. The key of the analysis is
to develop new theory for random field based Malliavin calculus to
prove the existence and uniqueness of adapted solutions to the
first-order and second-order Malliavin derivative based B-SPDEs
under random environments.\\

\noindent{\bf Key words and phrases:} Numerical Method, Error
Estimation, Convergence, Random Field Based Malliavin Calculus,
Backward SPDE, High-Order
Partial Differential Operator, Nonlinear, Random Environment\\
\noindent{\bf Mathematics Subject Classification 2000:} 60H35,
65C30, 60H15, 60K37, 60H30
\end{abstract}

\section{Introduction}

In this paper, we study the adapted solution, numerical schemes, and
related convergence analysis for a unified backward stochastic
partial differential equation (B-SPDE) given by
\begin{eqnarray}
\;\;\;\;\;V(t,x)&=&H(x)+\int_{t}^{T}{\cal L}(s,x,V)ds
+\int_{t}^{T}\left({\cal J}(s,x,V) -\bar{V}(s,x)\right)dW(s).
\elabel{bspdef}
\end{eqnarray}
The equation in \eq{bspdef} is vector valued. The nonlinear partial
differential operators ${\cal L}$ and ${\cal J}$ depend not only on
$V$ and/or $\bar{V}$ but also on their associated high-order partial
derivatives, e.g., up to the $k$th, $m$th, and $n$th orders for
$k,m,n\in\{0,1,2,...\}$,
\begin{eqnarray}
{\cal L}(s,x,V)&\equiv&{\cal
L}(s,x,V(s,x),...,V^{(k)}(s,x),\bar{V}(s,x),...,
\bar{V}^{(m)}(s,x)),\nonumber\\
{\cal J}(s,x,V)&\equiv&{\cal J}(s,x,V(s,x),...,V^{(n)}(s,x)).
\nonumber
\end{eqnarray}
The unified B-SPDE in \eq{bspdef} covers many existing systems as
special cases. For examples, when ${\cal J}=0$ and ${\cal L}$
depends only on $x,V,\bar{V}$, but not on their associated partial
derivatives, it reduces to a conventional backward (ordinary)
stochastic differential equation (BSDE) (see, e.g., Pardoux and
Peng~\cite{parpen:adasol}); Furthermore, when ${\cal J}=0$ and
${\cal L}$ depends on both the derivatives of $V$ and $\bar{V}$, it
reduces to a well-known example of strongly nonlinear B-SPDE derived
in Musiela and Zariphopoulou~\cite{muszar:stopar} for the purpose of
optimal-utility based portfolio choice. We here note that the
strongly nonlinearity concerning the operator ${\cal L}$ is in the
sense addressed in Lions and Souganidis~\cite{liosou:notaux},
Pardoux~\cite{par:stopar}. Besides these existing examples, our
motivations to study the B-SPDE in \eq{bspdef} are also from optimal
portfolio management in finance (see, e.g.,
Becherer~\cite{bec:bousol}, Dai~\cite{dai:meavar,dai:opthed},
Musiela and Zariphopoulou~\cite{muszar:stopar}), and multi-channel
(or multi-valued) image regularization such as color images in
computer vision and network application (see, e.g., Caselles {\em et
al.}~\cite{cassap:vecmed}, Tschumperl\'e and
Deriche~\cite{tscder:regort,tscder:conunc,tscder:anidif}).

Under certain generalized Lipschitz and linear growth conditions, we
adopt a method to prove the unified B-SPDE in \eq{bspdef} to be
well-posed in a suitable functional space. Although the approach is
partially embedded in the discussion of unique existence of solution
to a more general system with jumps in the preprint of
Dai~\cite{dai:newcla}. We refine it here and make it consistent with
the system in \eq{bspdef} to develop theoretical foundation of
random field based Malliavin calculus to conduct convergence
analysis and error estimation for our newly designed numerical
schemes.

Currently, there are numerous discussions concerning the numerical
schemes for resolving SDEs (see, e.g., Kloeden and
Platen~\cite{klopla:numsol}), SPDEs (see, e.g, Barth and
Lang~\cite{barlan:simsto}, Juan {\em et al.}~\cite{juaker:stomot}),
and BSDEs (see, e.g., Bender and Denk~\cite{benden:forsch}, Bouchard
and Touzi~\cite{boutou:disapp}, Bouchard and
ELIE~\cite{boueli:distim}, Gobet {\em et al.}~\cite{goblem:regmon},
Hu {\em et al.}~\cite{hunua:malcal}, Zhang~\cite{zha:numsch}).
Furthermore, there are also numerical techniques available in
computing the stationary distributions of reflecting SDEs (see,
e.g., Dai~\cite{dai:broapp}, Shen {\em et
al.}~\cite{shechedaidai:finele}). However, to the best of our
knowledge, there is no numerical technique available in the
literature for B-SPDEs. Thus, in this paper, we make such an attempt
to develop some numerical methods for the unified B-SPDE in
\eq{bspdef}.

More precisely, we design two algorithms to compute the adapted
solution of the B-SPDE in \eq{bspdef}. Comparing with most of the
existing schemes for BSDEs and considering computer implementation,
both of the algorithms are handled with completely discrete schemes
in terms of time and space. The first one (named
Algorithm~\ref{algorithmI}) is an iterative one while the second one
(named Algorithm~\ref{algorithmII}) is not a purely iterative one
since it needs to solve linear or nonlinear equations at each time
point. Hence, Algorithm~\ref{algorithmII} is expensive when $x$ is
in a higher-dimensional domain. Nevertheless, for the purpose of
comparison and for the case that $x$ is in a lower-dimensional
domain, Algorithm~\ref{algorithmII} is useful. Owing to the
similarity of discussions, the convergence analysis for the
algorithms is focused on Algorithm~\ref{algorithmI}.

The analysis concerning error estimation or rate of convergence of
Algorithm~\ref{algorithmI} is conducted with respect to a completely
discrete criterion. Comparing with existing discussions for BSDEs,
we need to develop new theory for random field based Malliavin
calculus to prove the existence and uniqueness of adapted solutions
to related first-order and second-order Malliavin derivative based
B-SPDEs under random environments.

The remainder of the paper is organized as follows. In
Section~\ref{uniexist}, we state conditions to guarantee the
existence and uniqueness of adapted solution to the B-SPDE in
\eq{bspdef}. In Section~\ref{finited}, we design our numerical
schemes and state our main convergence theorem. Related notations of
random field based calculus are also introduced. In
Section~\ref{uniextproof}, we prove our B-SPDE in \eq{bspdef} to be
well-posed. In Section~\ref{errorIthp}, we develop new theory for
random field based Malliavin calculus to provide theoretical
foundation in proving our main convergence theorem.

\section{The Adapted Solution to the B-SPDE: Existence and
Uniqueness}\label{uniexist}

\subsection{Preliminary Notations}

First, we use $(\Omega,{\cal F},\{{\cal F}_{t}\},P)$ to denote a
complete filtered probability space on which are defined a standard
$d$-dimensional Brownian motion $W\equiv\{W(t),t\in[0,T]\}$ with
$W(t)=(W_{1}(t),...,W_{d}(t))'$ and a filtration $\{{\cal
F}_{t},t\in[0,T]\}$ with ${\cal F}_{t}=\sigma(W(s),s\leq t)$, where
$T\in[0,\infty)$ and the prime denotes the corresponding transpose
of a matrix or a vector.

Second, we consider the $p$-dimensional rectangle
$D=[0,b_{1}]\times\cdot\cdot\cdot\times[0,b_{p}]$ with a given
$p\in{\cal N}=\{1,2,...\}$. Let $C^{k}(D,R^{q})$ for each
$k,q\in{\cal N}$ denote the Banach space of all functions $f$ having
continuous derivatives up to the order $k$ with the uniform norm,
\begin{eqnarray}
&&\|f\|_{C^{k}(D,q)}=\max_{c\in\{0,1,...,k\}}\max_{j\in\{1,...,r(c)\}}
\sup_{x\in D}\left|f^{(c)}_{j}(x)\right| \elabel{fckupd}
\end{eqnarray}
for each $f\in C^{k}(D,R^{q})$. The $r(c)$ in \eq{fckupd} for each
$c\in\{0,1,...,k\}$ is the total number of the partial derivatives
of the order $c$
\begin{eqnarray}
&&f^{(c)}_{r,(i_{1}...i_{p})}(x)=\frac{\partial^{c}f_{r}(x)}{\partial
x_{1}^{i_{1}}...\partial x_{p}^{i_{p}}} \elabel{difoperatorI}
\end{eqnarray}
with $i_{l}\in\{0,1,...,c\}$, $l\in\{1,...,p\}$, $r\in\{1,...,q\}$,
and $i_{1}+...+i_{p}=c$. Furthermore, let
\begin{eqnarray}
f_{(i_{1},...,i_{p})}^{(c)}&\equiv&(f_{1,(i_{1},...,i_{p})}^{(c)},...,
f_{q,(i_{1},...,i_{p})}^{(c)}),
\elabel{difoperatoro}\\
f^{(c)}(x)&\equiv&(f_{1}^{(c)}(x),...,f_{r(c)}^{(c)}(x)),
\elabel{difoperator}
\end{eqnarray}
where each $j\in\{1,...,r(c)\}$ corresponds to a $p$-tuple
$(i_{1},...,i_{p})$ and a $r\in\{1,...,q\}$.

Third, we use $C^{\infty}(D,R^{q})$ to denote the Banach space
\begin{eqnarray}
&&C^{\infty}(D,R^{q})\equiv
\left\{f\in\bigcap_{c=0}^{\infty}C^{c}(D,R^{q}),
\|f\|_{C^{\infty}(D,q)}<\infty\right\}, \elabel{cinfity}
\end{eqnarray}
where
\begin{eqnarray}
&&\|f\|^{2}_{C^{\infty}(D,q)}=\sum_{c=0}^{\infty}\xi(c)
\|f\|^{2}_{C^{c}(D,q)} \elabel{inftynorm}
\end{eqnarray}
for some discrete function $\xi(c)$ in terms of $c\in\{0,1,2,...\}$,
which is fast decaying in $c$. For convenience, we take
$\xi(c)=\frac{1}{((c^{10})!)(\eta(c)!)e^{c}}$ with
\begin{eqnarray}
&&\eta(c)=[\max\{|x_{1}|+...+|x_{p}|,x\in D\}]^{c}, \nonumber
\end{eqnarray}
where the notation $[]$ denotes the summation of the unity and the
integer part of a real number.

Fourth, we define some measurable spaces to support random fields
considered in this paper. Let $L^{2}_{{\cal
F}}([0,T],C^{\infty}(D;R^{q}))$ denote the set of all $R^{q}$-valued
(or called $C^{\infty}(D;R^{q})$-valued) measurable stochastic
processes $Z(t,x)$ adapted to $\{{\cal F}_{t},t\in[0,T]\}$ for each
$x\in D$, which are in $C^{\infty}(D,R^{q})$ for each fixed
$t\in[0,T]$), such that
\begin{eqnarray}
&&E\left[\int_{0}^{T}\|Z(t)\|^{2}_{C^{\infty}(D,q)}dt\right]<\infty.
\elabel{adaptednormI}
\end{eqnarray}
Let $L^{2}_{{\cal F},p}([0,T],C^{\infty}(D,R^{q}))$ denote the
corresponding set of predictable processes (see, e.g., Definition
5.2 and Definition 1.1 respectively in pages 21 and 45 of Ikeda and
Watanabe \cite{ikewat:stodif}). Furthermore, let $L^{2}_{{\cal
F}_{T}}(\Omega,C^{\infty}(D;R^{q}))$ denote the set of all
$R^{q}$-valued, ${\cal F}_{T}$-measurable random variables
$\zeta(x)$ for each $x\in D$, where $\zeta(x)\in
C^{\infty}(D,R^{q})$ satisfies
\begin{eqnarray}
&&\|\zeta\|^{2}_{L^{2}_{{\cal
F}_{T}}(\Omega,C^{\infty}(D,R^{q}))}\equiv
E\left[\|\zeta\|^{2}_{C^{\infty}(D,q)}\right]<\infty.
\elabel{initialx}
\end{eqnarray}
In addition, we define
\begin{eqnarray}
&&{\cal Q}^{2}_{{\cal F}}([0,T]\times D)\equiv L^{2}_{{\cal
F}}([0,T],C^{\infty}(D,R^{q}))\times L^{2}_{{\cal
F},p}([0,T],C^{\infty}(D,R^{q\times d})). \elabel{bunisoI}
\end{eqnarray}

\subsection{The Conditions}

In this subsection, we impose some conditions to guarantee the
unique existence of adapted solution to \eq{bspdef} and to be used
in the convergence analysis of our designed algorithms. First, let
``a.s." denote ``almost surely". Then, suppose that, for each
$s\in[0,T]$,
\begin{eqnarray}
\bar{V}(s,\cdot)&=&\left(\bar{V}_{1}(s,\cdot),...,\bar{V}_{d}(s,\cdot)
\right)\in C^{\infty}(D,R^{q\times d})\;\;\;a.s., \elabel{barvv}
\end{eqnarray}
and in \eq{bspdef}, ${\cal L}$ is a $q$-dimensional partial
differential operator satisfying the generalized Lipschitz condition
a.s.
\begin{eqnarray}
&&\left\|\Delta{\cal L}^{(c+l+o)}(s,x,u,v)\right\|\leq
K_{D,c}\left(\|u-v\|_{C^{k+c}(D,q)}+\|\bar{u}-\bar{v}\|_{C^{m+c}(D,qd)}
\right) \elabel{blipschitz}
\end{eqnarray}
for any $(u,\bar{u})$, $(v,\bar{v})$ $\in C^{\infty}(D,R^{q})\times
C^{\infty}(D,R^{q\times d})$, where $K_{D,c}$ with each
$c\in\{0,1,2,...\}$ is a nonnegative constant. Note that $K_{D,c}$
depends on the domain $D$ and the differential order $c$ with
respect to each $x\in D$ and may be unbounded as
$c\rightarrow\infty$ and $D\rightarrow R^{p}$. $l\in\{0,1,2\}$
denotes the $l$th order of partial derivative of $\Delta{\cal
L}^{(c)}(s,x,u,v)$ in time variable $t$. $o\in\{0,1,2\}$ denotes the
$o$th order of partial derivative of $\Delta{\cal
L}^{(c+l)}(s,x,u,v)$ in terms of a component of $u$ or $v$. $\|A\|$
is the largest absolute value of entries (or components) of the
given matrix (or vector) $A$, and
\begin{eqnarray}
&&\Delta{\cal L}^{(c+l+o)}(s,x,u,v) \equiv{\cal
L}^{(c+l+o)}(s,x,u,\bar{u})-{\cal L}^{(c+l+o)}(s,x,v,\bar{v}).
\nonumber
\end{eqnarray}
Similarly, ${\cal J}=({\cal J}_{1},...,{\cal J}_{d})$ is a $q\times
d$-dimensional partial differential operator satisfying, a.s.,
\begin{eqnarray}
&&\|\Delta{\cal J}^{(c+l+o)}(s,x,u,v)\|\leq
K_{D,c}\left(\|u-v\|_{C^{m+c}(D,q)}\right). \elabel{blipschitzo}
\end{eqnarray}
In addition, we assume that the generalized linear growth conditions
hold,
\begin{eqnarray}
\left\|{\cal L}^{(c+l+o)}(s,x,u,\bar{u})\right\|&\leq&
K_{D,c}\left(\delta_{0c}+\|u\|_{C^{k+c}(D,q)}+\|\bar{v}\|_{C^{m+c}(D,qd)}\right),
\elabel{blipschitzoI}\\
\left\|{\cal J}^{(c+l+o)}(s,x,u)\right\|&\leq&
K_{D,c}\left(\delta_{0c}+\|u\|_{C^{m+c}(D,q)}\right),
\elabel{blipschitzI}
\end{eqnarray}
where $\delta_{0c}=1$ if $c=0$ and $\delta_{0c}=0$ if $c>0$.

\subsection{The Adapted Solution}

\begin{theorem}\label{bsdeyI}
Assume that $H(x)\in L^{2}_{{\cal F}_{T}}(\Omega,
C^{\infty}(D;R^{q}))$ for each $x\in D$. Then, under conditions of
\eq{blipschitz}-\eq{blipschitzI}, if ${\cal L}(t,x,v,\cdot)$ and
${\cal J}(t,x,v,\cdot)$ are $\{{\cal F}_{t}\}$-adapted for each
fixed $x\in D$ and any given $(v,\bar{v})\in
C^{\infty}(D,R^{q})\times C^{\infty}(D,R^{q\times d})$ with
\begin{eqnarray}
&&{\cal L}(\cdot,x,0,\cdot)\in L^{2}_{{\cal
F}}\left([0,T],C^{\infty}(D,R^{q})\right),\;{\cal
J}(\cdot,x,0,\cdot)\in L^{2}_{{\cal
F}}\left([0,T],C^{\infty}(D,R^{q\times d})\right), \elabel{blipic}
\end{eqnarray}
the B-SPDE \eq{bspdef} has a unique adapted solution,
\begin{eqnarray}
&&(V(\cdot,\cdot),\bar{V}(\cdot,\cdot))\in {\cal Q}^{2}_{{\cal
F}}([0,T]\times D). \elabel{buniso}
\end{eqnarray}
\end{theorem}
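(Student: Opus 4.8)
The plan is to prove \Theorem{bsdeyI} by a Picard/contraction fixed-point argument in the complete space ${\cal Q}^2_{{\cal F}}([0,T]\times D)$, adapting the Pardoux--Peng scheme for BSDEs to the present random-field setting with the weighted $C^{\infty}(D,R^{q})$ norm. First I would construct the solution map $\Phi$. Given a frozen pair $(U,\bar U)\in{\cal Q}^2_{{\cal F}}([0,T]\times D)$, consider the ${\cal F}_T$-measurable terminal datum $H(x)+\int_0^T{\cal L}(s,x,U)\,ds$ and its conditional-expectation martingale $M(t,x)=E[H(x)+\int_0^T{\cal L}(s,x,U)\,ds\mid{\cal F}_t]$. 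Because the relevant martingales take values in the separable, Hilbert-type space $C^{\infty}(D,R^{q})$, I would invoke the Hilbert-space-valued martingale representation theorem to obtain $M(t,x)=M(0,x)+\int_0^t\zeta(s,x)\,dW(s)$ with $\zeta$ a genuine $C^{\infty}(D,R^{q\times d})$-valued predictable integrand, rather than representing each $x$-section separately. I would then set $V(t,x)=M(t,x)-\int_0^t{\cal L}(s,x,U)\,ds$ and $\bar V(s,x)={\cal J}(s,x,U)+\zeta(s,x)$. A direct computation gives $V(T,x)=H(x)$ and shows that $(V,\bar V)$ satisfies \eq{bspdef} with $(U,\bar U)$ frozen inside ${\cal L}$ and ${\cal J}$; hence a fixed point of $\Phi(U,\bar U)=(V,\bar V)$ is exactly an adapted solution, and uniqueness of the fixed point yields the uniqueness claim.

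Second I would verify that $\Phi$ maps ${\cal Q}^2_{{\cal F}}([0,T]\times D)$ into itself. Predictability of $\bar V$ follows from that of $\zeta$ together with the adaptedness of ${\cal J}(\cdot,x,U)$, while the required square-integrability of $V$ and $\bar V$ in the weighted $C^{\infty}$ norm is obtained from the generalized linear-growth bounds \eq{blipschitzoI}--\eq{blipschitzI}, the hypothesis $H\in L^2_{{\cal F}_T}(\Omega,C^{\infty}(D;R^{q}))$, and \eq{blipic}, combined with Doob's $L^2$ inequality and the It\^o isometry applied order-by-order in the derivative index $c$. The role of the fast-decaying weight $\xi(c)$ is to force the infinite sum $\sum_c\xi(c)\|\cdot\|^2_{C^{c}(D,q)}$ to converge even though the constants $K_{D,c}$ may blow up as $c\to\infty$, so I would isolate as a preliminary lemma the estimate bounding the shifted norms $\|\cdot\|_{C^{k+c}}$ and $\|\cdot\|_{C^{m+c}}$ that appear on the right of \eq{blipschitz}--\eq{blipschitzI} by the full $C^{\infty}$ norm; this is the step where the explicit choice of $\xi$ enters.

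Third, and this is the heart of the matter, I would establish that $\Phi$ is a contraction. For two inputs I would apply It\^o's formula to the weighted squared difference of the two $V$-images, summed against $\xi(c)$ over derivative orders, use the generalized Lipschitz conditions \eq{blipschitz} and \eq{blipschitzo}, and control the cross terms by Young's inequality. Working with the equivalent norm $E\int_0^T e^{\beta t}(\|\cdot\|^2+\|\cdot\|^2)\,dt$ and taking $\beta$ large absorbs the Lipschitz contributions and delivers a strict contraction factor on all of $[0,T]$. The main obstacle I anticipate is precisely the interchange of the infinite derivative-order summation with the stochastic-calculus estimates: since \eq{blipschitz} bounds the order-$(c+l+o)$ derivative of $\Delta{\cal L}$ in terms of $C^{k+c}$ and $C^{m+c}$ norms of its arguments, the weighted sum defining $\|\Delta{\cal L}\|^2_{C^{\infty}}$ mixes indices, and one must verify an inequality of the type $\sum_c\xi(c)K_{D,c}^2\|\,\cdot\,\|^2_{C^{k+c}}\le C\,\|\cdot\|^2_{C^{\infty}}$ uniformly in the arguments. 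Reconciling this index shift against the possible growth of $K_{D,c}$ is exactly what the super-factorial decay of $\xi(c)$ is engineered to accomplish, so I would prove that summability bound carefully before feeding it into the contraction estimate.

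Finally, with $\Phi$ a contraction on the complete space ${\cal Q}^2_{{\cal F}}([0,T]\times D)$ under the equivalent norm, the Banach fixed-point theorem furnishes a unique pair $(V,\bar V)$, which is the unique adapted solution \eq{buniso}. Should the global contraction prove delicate because of the weight estimate, I would instead contract on a short subinterval and patch the resulting local solutions backward from $T$ over finitely many steps to cover $[0,T]$, using the value computed on each step as the terminal datum for the preceding one.
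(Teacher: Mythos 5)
Your overall skeleton is the paper's: freeze $(U,\bar U)$, produce $(V,\bar V)$ by a conditional-expectation martingale plus martingale representation (the paper's Lemma~\ref{martindecom}), then run a Picard/contraction argument in the $\xi(c)$-weighted, exponentially weighted norm, absorbing the index shift $c\mapsto k+c$ in the Lipschitz bounds \eq{blipschitz}--\eq{blipschitzI} by the super-factorial decay of $\xi$ and an increasing sequence $\gamma_{0}<\gamma_{1}<\cdots$ (the paper's Lemma~\ref{lemmathree}; note the paper's recursion is actually two-step, in $i-1$ and $i-2$, but that is a detail). However, there is one genuine gap, and it sits exactly at the step where you deliberately depart from the paper: the martingale representation. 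The space $C^{\infty}(D,R^{q})$ with the norm \eq{inftynorm} is \emph{not} a Hilbert space, nor ``Hilbert-type'': each $\|\cdot\|_{C^{c}(D,q)}$ is a uniform (sup) norm, and the weighted sum of their squares fails the parallelogram law (take two bump functions with disjoint supports and equal $C^{c}$ norms: $\|f\pm g\|_{C^{c}}=\max(\|f\|_{C^{c}},\|g\|_{C^{c}})$, so $\|f+g\|^{2}+\|f-g\|^{2}=2\sum_{c}\xi(c)\neq 4\sum_{c}\xi(c)$). It is a Banach space built from sup norms, for which no inner-product structure, and no UMD-type geometric property needed for Banach-space-valued martingale representation, is available or established. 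So the sentence ``invoke the Hilbert-space-valued martingale representation theorem to obtain a genuine $C^{\infty}(D,R^{q\times d})$-valued predictable integrand'' is not a legitimate step, and your whole verification that $\Phi$ maps ${\cal Q}^{2}_{{\cal F}}([0,T]\times D)$ into itself leans on it.

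The paper closes precisely this hole by a different, and unavoidable, piece of work: it applies the ordinary finite-dimensional martingale representation theorem \emph{for each fixed} $x\in D$ (Lemma~\ref{martindecom}, around \eq{sigmanI}), and then proves in Lemma~\ref{differentiableV} that the resulting random field $(V,\bar V)$ is a.s.\ $C^{\infty}$ in $x$, with each spatial derivative $(V^{(c)},\bar{V}^{(c)})$ again solving the correspondingly differentiated frozen equation \eq{pdsigmanV}. That regularity lemma is substantial: it uses difference quotients, It\^o's formula applied to $e^{2\gamma t}$-weighted squared differences, an essential-supremum argument (Peskir--Shiryaev), Fatou and dominated convergence, and induction on the derivative order. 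Your proposal implicitly assumes this spatial smoothness comes for free from the vector-valued representation; once that representation is unavailable, you must supply an analogue of Lemma~\ref{differentiableV} before the contraction step (which you otherwise describe correctly, including the key summability bound $\sum_{c}\xi(c)K_{D,c}^{2}\|\cdot\|^{2}_{C^{k+c}}\leq C\|\cdot\|^{2}_{C^{\infty}}$) can even be formulated in ${\cal Q}^{2}_{{\cal F}}([0,T]\times D)$.
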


The proof of Theorem~\ref{bsdeyI} is provided in
Section~\ref{uniextproof}, which is partially embedded in the
discussion of unique existence of solution to a more general system
with jumps in the preprint of Dai~\cite{dai:newcla}. Since the
techniques adopted in the proof are frequently used in the rest of
this paper, we refine them here and make them consistent with the
system in \eq{bspdef} for convenience.

\section{Numerical Schemes and Their Convergence}
\label{finited}

\subsection{The Schemes}

Consider a partition $\pi$ for the product of the time interval
$[0,T]$ and the $p$-dimensional rectangle
$D=[0,b_{1}]\times\cdot\cdot\cdot\times[0,b_{p}]$ with a given
$p\in{\cal N}=\{1,2,...\}$ as follows,
\begin{eqnarray}
\pi:
&&0=t_{0}<t_{1}<\cdot\cdot\cdot<t_{n_{0}}=T\;\;\;\;\mbox{with}\;\;
n_{0}\in\{0,1,...\},\elabel{partitionI}\\
&&0=x_{l}^{0}<x_{l}^{1}<\cdot\cdot\cdot<x_{l}^{n_{l}}=b_{l}\;\;
\mbox{with}\;\;l\in\{1,...,p\},\;n_{l}\in\{0,1,...\}.\nonumber
\end{eqnarray}
In the sequel, for all $l\in\{0,1,...,p\}$ and
$j_{l}\in\{1,...,n_{l}\}$, we take
\begin{eqnarray}
\Delta^{\pi}_{j_{0}}&=&t_{j_{0}}-t_{j_{0}-1},
\elabel{discI}\\
\Delta^{\pi}_{l}&=&x_{l}^{j_{l}}-x_{l}^{j_{l}-1}=\frac{b_{l}}{n_{l}},
\elabel{discII}\\
\Delta^{\pi}W_{j_{0}}&=&W(t_{j_{0}})-W(t_{j_{0}-1}),
\end{eqnarray}
and let
\begin{eqnarray}
|\pi|&\equiv&\max_{j_{0}\in\{1,...,n_{0}\},\;
l\in\{1,...,p\}}\left\{\Delta^{\pi}_{j_{0}},\;\Delta^{\pi}_{l}\right\},
\elabel{maxpartition}\\
D^{j_{1}...j_{p}}&\equiv&[x_{1}^{j_{1}-1},\;x_{1}^{j_{1}})\times\cdot
\cdot\cdot\times[x_{p}^{j_{p}-1},\;x_{p}^{j_{p}}),
\elabel{partitionD}\\
{\cal X}&\equiv&\left\{x:x=(x_{1}^{j_{1}},...,x_{p}^{j_{p}}),\;j_{l}
\in\{0,1,...,n_{l}\},\;l\in\{1,...,p\}\right\}. \elabel{xsetrepre}
\end{eqnarray}

To suitably describe the approximations of partial derivatives
appeared in \eq{bspdef} and \eq{difoperatorI}, we assume that the
orders $k$ and $m$ are less than $2\max\{n_{1},...,n_{p}\}$. Then we
can use the forward and the backward difference techniques to
approximate the partial derivatives in \eq{bspdef} and
\eq{difoperatorI} as follows. For each $f\in\{V,\bar{V}\}$,
$x\in{\cal X}$, $l\in\{1,...,p\}$, and each integer $c$ satisfying
$1\leq c\leq k$ or $m$ or $n$, define
\begin{eqnarray}
f^{(c)}_{i_{1}...(i_{l}+1)...i_{p},\pi}(t,x)
       &\equiv&\left\{\begin{array}{ll}
       \frac{f^{(c-1)}_{i_{1}...i_{p},\pi}(t,x+\Delta^{\pi}_{l}e_{l})
       -f^{(c-1)}_{i_{1}...i_{p},\pi}(t,x)}{\Delta^{\pi}_{l}}
       &\mbox{if}\;\;j_{l}<n_{l},\\
       \frac{f^{(c-1)}_{i_{1}...i_{p},\pi}(t,x-\Delta^{\pi}_{l}e_{l})
       -f^{(c-1)}_{i_{1}...i_{p},\pi}(t,x)}{\Delta^{\pi}_{l}}
       &\mbox{if}\;\;j_{l}=n_{l},
       \end{array}
\right. \elabel{fdifII}
\end{eqnarray}
where $e_{l}$ is the unit vector whose $l$th component is the unity
and others are zero, $f^{(0)}=f$, and $(i_{1},...,i_{p})\in{\cal
I}^{c-1}$ with
\begin{eqnarray}
&&{\cal I}^{c}\equiv\{(i_{1},...,i_{p}):
i_{1},...,i_{p}\;\;\mbox{are nonnegative integers
satisfying}\;\;i_{1}+...+i_{p}=c\}. \elabel{iindex}
\end{eqnarray}
Furthermore, we define the following vector for all given
$(i_{1},...,i_{p})\in{\cal I}^{c}$
\begin{eqnarray}
f^{(c)}_{\pi}(t,x) =(f^{(c)}_{i_{1}...i_{p},\pi}(t,x))
\elabel{vectorf}
\end{eqnarray}
according to an increasing order indexed by
$i_{p}c^{p}+i_{p-1}c^{p-1}+...+i_{2}c+i_{1}$. Next, to be simple for
notations, we define
\begin{eqnarray}
\;\;\;\;{\cal L}(t,x,V_{\pi}(t,x)) &\equiv&{\cal
L}(t,x,V_{\pi}(t,x),...,V^{(k)}_{\pi}(t,x),
\bar{V}_{\pi}(t,x),...,\bar{V}^{(m)}_{\pi}(t,x)),
\elabel{fdifIII}\\
{\cal J}(t,x,V_{\pi}(t,x)) &\equiv&{\cal
J}(t,x,V_{\pi}(t,x),...,V^{(n)}_{\pi}(t,x)) \elabel{fdifIV}
\end{eqnarray}
for each $x\in{\cal X}$. Thus, based on spacial discretization, we
can design the following direct discrete approximations of a
solution to the B-SPDE displayed in \eq{bspdef}.
\begin{algorithm}\label{algorithmI}
This algorithm is an iterative one in terms of
$\{V^{(c)}(t_{j_{0}},x),\bar{V}^{(c)}(t_{j_{0}},x)$ for all
$x\in{\cal X}\}$ with $j_{0}$ decreasing from $n_{0}$ to 1 in a
backward manner and $c=0,1,...,M$ with $M=\max\{m,n,k\}$,
\begin{eqnarray}
V^{(c)}_{i_{1}...i_{p},\pi}(t_{n_{0}},x)&=&H^{(c)}_{i_{1}...i_{p},\pi}(x),
\;\;
\bar{V}^{(c)}_{i_{1}...i_{p},\pi}(t_{n_{0}},x)=0,\elabel{IalgorithmI}\\
\;\;\;\;\;V^{(c)}_{i_{1}...i_{p},\pi}(t_{j_{0}-1},x)
&=&E\left[\left.V^{(c)}_{i_{1}...i_{p},\pi}(t_{j_{0}},x) +{\cal
L}^{(c)}_{i_{1}...i_{p},\pi}(t_{j_{0}},x,V_{\pi}(t_{j_{0}},x))
\Delta^{\pi}_{j_{0}}\right|{\cal F}_{t_{j_{0}-1}}\right],
\elabel{IIalgorithmI}\\
\;\;\;\;\;\bar{V}^{(c)}_{i_{1}...i_{p},\pi}(t_{j_{0}-1},x)
&=&\frac{1}{\Delta^{\pi}_{j_{0}}}
E\left[\left.V^{(c)}_{i_{1}...i_{p},\pi}(t_{j_{0}},x)\Delta^{\pi}W_{j_{0}}\right|{\cal
F}_{t_{j_{0}-1}}\right] \elabel{IIIalgorithmI}\\
&&+E\left[\left.{\cal
L}^{(c)}_{i_{1}...i_{p},\pi}(t_{j_{0}},x^{\pi},V_{\pi}(t_{j_{0}},x))
\Delta^{\pi}W_{j_{0}}\right|{\cal
F}_{t_{j_{0}-1}}\right]\nonumber\\
&&+{\cal
J}^{(c)}_{i_{1}...i_{p},\pi}(t_{j_{0}-1},x,V_{\pi}(t_{j_{0}-1},x)).
\nonumber
\end{eqnarray}
\end{algorithm}
\begin{algorithm}\label{algorithmII}
For all $x\in{\cal X}$ and $c=0,1,...,M$, we have the following
algorithm with respect to $j_{0}$ decreasing from $n_{0}$ to 1,
\begin{eqnarray}
V^{(c)}_{i_{1}...i_{p},\pi}(t_{n_{0}},x)
&=&H^{(c)}_{i_{1}...i_{p},\pi}(x)\elabel{IalgorithmII}\\
\;\;\;\;\;V^{(c)}_{i_{1}...i_{p},\pi}(t_{j_{0}-1},x)
&=&E\left[\left.V^{(c)}_{i_{1}...i_{p},\pi}(t_{j_{0}},x)\right|{\cal
F}_{t_{j_{0}-1}}\right]\elabel{IIalgorithmII}\\
&&+{\cal
L}^{(c)}_{i_{1}...i_{p},\pi}(t_{j_{0}-1},x,V_{\pi}(t_{j_{0}-1},x))
\Delta^{\pi}_{j_{0}}, \nonumber\\
\bar{V}^{(c)}_{i_{1}...i_{p},\pi}(t_{j_{0}-1},x)
&=&\frac{1}{\Delta^{\pi}_{j_{0}}}
E\left[\left.V^{(c)}_{i_{1}...i_{p},\pi}(t_{j_{0}},x)\Delta^{\pi}W_{j_{0}}\right|{\cal
F}_{t_{j_{0}-1}}\right]\elabel{IIIalgorithmII}\\
&&+{\cal
J}^{(c)}_{i_{1}...i_{p},\pi}(t_{j_{0}-1},x,V_{\pi}(t_{j_{0}-1},x)).
\nonumber
\end{eqnarray}
\end{algorithm}
In nature, Algorithm~\ref{algorithmII} is a sort of generalization
of the scheme considered in Bouchard and Touzi~\cite{boutou:disapp}
from BSDEs to the B-SPDEs. Comparing with
Algorithm~\ref{algorithmI}, it is not a purely iterative one since
it needs to solve linear or nonlinear equations at each time
$t_{j_{0}-1}$ to obtain $V^{(c)}_{i_{1}...i_{p},\pi}(t_{j_{0}-1},x)$
and $\bar{V}^{(c)}_{i_{1}...i_{p},\pi}(t_{j_{0}-1},x)$, which is
expensive when $x$ is in a higher-dimensional domain (e.g., $p\geq
3$). Furthermore, since the convergence analysis is similar, we will
focus our discussion on Algorithm~\ref{algorithmI} in the rest of
this paper.


\subsection{Additional Notations for Random Field Based Malliavin Calculus}

Let $H=L^{2}([0,T],R^{d})$ denote the separable Hilbert space of all
square integrable real-valued $d$-dimensional functions over the
time interval $[0,T]$ with inner product
\begin{eqnarray}
<\cdot,\cdot>_{H}=\int_{0}^{T}\langle h^{1}(t),h^{2}(t)\rangle dt
\;\;\mbox{for any}\;\;h^{1},h^{2}\in H, \nonumber
\end{eqnarray}
and
\begin{eqnarray}
\langle
h^{1}(t),h^{2}(t)\rangle=\sum_{i=1}^{d}h^{1}_{i}(t)h^{2}_{i}(t).
\elabel{hnorm}
\end{eqnarray}
For each $h\in H$, we define $W(h)=\int_{0}^{T}\langle
h(t),dW(t)\rangle$. Furthermore, let ${\cal S}$ denote the set of
all the random variables $F(x,\omega)$ of the following form with
$x\in D$ and $\omega\in\Omega$,
\begin{eqnarray}
F(x)=\phi(W(h^{1}),...,W(h^{g}),x)\;\;\mbox{with}\;\;\phi\in
C_{b}^{\infty}(R^{g+p},R^{q}),\;h^{1},...,h^{g}\in H,
\elabel{compoundf}
\end{eqnarray}
for some nonnegative integer $g$, where the lower index $b$ appeared
in $C_{b}^{\infty}$ means bounded. For each $F\in{\cal S}$, we
define
\begin{eqnarray}
&&\|F\|_{\alpha,2}^{\infty,2}
=\sum_{v=0}^{\infty}\xi(v)\left\|F\right\|^{v,2}_{\alpha,2},
\elabel{finfty}
\end{eqnarray}
where, the norm $\|\cdot\|^{v}_{\alpha,2}$ with $\alpha\in\{1,2\}$
and $v\in\{0,1,2,...\}$ is defined in the following way.

First, we define the first-order Malliavin derivative of the $c$th
order partial derivative $F^{(c)}_{r,i_{1}...i_{p}}(x)$ in terms of
$x\in D$ for each $r\in\{1,...,q\}$, $c\in\{0,1,...\}$, and
$(i_{1},...,i_{p})\in{\cal I}^{c}$ to be the $H$-valued random
variable,
\begin{eqnarray}
&&{\cal D}_{\theta_{1}}F^{(c)}_{r,i_{1}...i_{p}}(x)=\sum_{l=1}^{g}
\frac{\partial\phi^{(c)}_{r,i_{1}...i_{p}}}{\partial
y_{l}}(W(h^{1}),...,W(h^{g}),x)h^{l}(\theta_{1}),\;0\leq\theta_{1}\leq
T.\elabel{mmderi}
\end{eqnarray}
Second, for each $j\in\{1,...,d\}$, we define the associated
second-order Malliavin derivative
\begin{eqnarray}
&&{\cal D}_{\theta_{2}}{\cal
D}^{j}_{\theta_{1}}F^{(c)}_{r,i_{1}...i_{p}}(x)=\sum_{l=1}^{g}
\frac{\partial{\cal
D}^{j}_{\theta_{1}}F^{(c)}_{r,i_{1}...i_{p}}(x)}{\partial
y_{l}}h^{l}(\theta_{2}),\;0\leq\theta_{2}\leq T.\elabel{mmderiI}
\end{eqnarray}
Third, we define
\begin{eqnarray}
&&\left\|F\right\|^{v,2}_{1,2}
=E\left[\Lambda_{v}\left\|F^{(c)}_{r,i_{1}...i_{p}}\right\|^{2}_{C^{v}(D,1)}
+\int_{0}^{T}\Lambda_{v}\left\|{\cal
D}_{\theta_{1}}F^{(c)}_{r,i_{1}...i_{p}}\right\|^{2}_{C^{v}(D, d)}
d\theta_{1}\right],
\elabel{mnorm}\\
&&\left\|F\right\|^{v,2}_{2,2} =\left\|F\right\|^{v,2}_{1,2}
+E\left[\int_{0}^{T}\int_{0}^{T}\Lambda_{v}\left\|{\cal
D}_{\theta_{2}}{\cal
D}_{\theta_{1}}F^{(c)}_{r,i_{1}...i_{p}}\right\|^{2}_{C^{v}(D,d\times
d)}d\theta_{1}d\theta_{2}\right], \elabel{mnormI}
\end{eqnarray}
where the notation $\Lambda_{v}$ is defined by
\begin{eqnarray}
&&\Lambda_{v}=\max_{c\in\{0,1,...,v\}}
\max_{r\in\{1,...,q\},(i_{1},...,i_{p})\in{\cal
I}^{c}}.\elabel{notationI}
\end{eqnarray}
Note that if $F^{(c)}_{r,i_{1}...i_{p}}(x)$ for each $x\in D$ is
${\cal F}_{t}$-measurable, then ${\cal
D}_{\theta}F^{(c)}_{r,i_{1}...i_{p}}(x)=0$ for $\theta\in(t,T]$ and
we use ${\cal D}^{j}_{\theta}F^{(c)}_{r,i_{1}...i_{p}}(x)$ for each
$j\in\{1,..,d\}$ to denote the $j$th component of ${\cal
D}_{\theta}F^{(c)}_{r,i_{1}...i_{p}}(x)$.

Next, let $L^{2}(\Omega,C^{\infty}(D,R^{q}))$ be the space
corresponding to \eq{initialx} with no measurable property imposed,
and let $L^{2}_{\alpha,2}(\Omega,(C^{\infty}(D,H))^{q})$ be the
space of $H^{q}$($q$ product space $H\times...\times H$)-valued
processes, which is endowed with the norm \eq{finfty}. Then, we can
use ${\cal D}^{\alpha,2}_{\infty}$ to denote the domain of the
following unbounded operator,
\begin{eqnarray}
{\cal D}:\;L^{2}(\Omega,C^{\infty}(D,R^{q}))\rightarrow
L^{2}_{\alpha,2}(\Omega,(C^{\infty}(D,H))^{q}). \nonumber
\end{eqnarray}
Owing to Lemma~\ref{closablelemma} proved in
Section~\ref{errorIthp}, this domain is the closure of the class of
smooth random variables ${\cal S}$ with the norm \eq{finfty}. In the
sequel, we use $D_{\theta}F(x,\omega)$ with each $F\in{\cal
D}^{1,2}_{\infty}$ and $\theta\in[0,T]$ to denote the following
infinite-dimensional vector
\begin{eqnarray}
&&\left\{(D^{j}_{\theta}F^{(c)}_{r,i_{1}...i_{p}}(t,x):r\in\{1,...,q\},j\in\{1,...,d\},
c\in\{0,1,...\},(i_{1},...,i_{p})\in{\cal I}^{c}\right\}.
\elabel{dcomponent}
\end{eqnarray}
Similarly, we use $D_{\theta_{2}}D_{\theta_{1}}F(x,\omega)$ with
each $F\in{\cal D}^{2,2}_{\infty}$ and
$\theta_{1},\theta_{2}\in[0,T]$ to denote the following
infinite-dimensional vector
\begin{eqnarray}
&&\;\;\;\left\{(D^{j_{2}}_{\theta_{2}}D^{j_{1}}_{\theta_{1}}F^{(c)}_{r,i_{1}...i_{p}}(t,x):
r\in\{1,...,q\},j_{1},j_{2}\in\{1,...,d\},
c\in\{0,1,...\},(i_{1},...,i_{p})\in{\cal I}^{c}\right\}.
\elabel{dcomponent}
\end{eqnarray}

Finally, based on the above notations, we can impose the following
terminal conditions for the $q$-dimensional B-SPDEs in \eq{bspdef},
\begin{eqnarray}
H(x,\omega)&\in&{\cal D}^{2,2}_{\infty}\bigcap L^{2}_{{\cal
F}_{T}}(\Omega,C^{\infty}(D,R^{q})),
\elabel{termI}\\
D_{\theta_{1}}H(x,\omega)&\in&L^{2}_{{\cal
F}_{T}}(\Omega;C^{\infty}(D,R^{q\times
d})),\;\;\;\;\;\;\;\;\theta_{1}\in[0,T],
\elabel{termII}\\
{\cal D}_{\theta_{2}}{\cal
D}_{\theta_{1}}H(x,\omega)&\in&L^{2}_{{\cal
F}_{T}}(\Omega,C^{\infty}(D,R^{q\times d\times
d})),\;\;\;\;\theta_{2}\in[0,T]. \elabel{termIII}
\end{eqnarray}

\subsection{Convergence Theorem for Algorithm~\ref{algorithmI}}

\begin{theorem}\label{errorIth}
Consider Algorithm~\ref{algorithmI} under conditions required by
Theorem~\ref{bsdeyI} and with additional terminal conditions
\eq{termI}-\eq{termIII}. Then, there exists some nonnegative
constant $C$ depending only on the terminal time $T$, the region
$D$, and the constant $\kappa$ such that
\begin{eqnarray}
&&\sum_{c=0}^{M}\max_{x\in{\cal X}}\left(\sup_{t\in[0,T]}
E\left[\left\|\Delta V^{(c)}(t,x)\right\|^{2}\right]+
\sup_{t\in[0,T]}E\left[\left\|\Delta\bar{V}^{(c)}(t,x)\right\|^{2}\right]\right)\leq
C|\pi|, \elabel{errorest}
\end{eqnarray}
for all sufficiently small $|\pi|$, where
\begin{eqnarray}
\Delta V^{(c)}(t,x)&=&V^{(c)}(t,x)-V^{(c)}_{\pi}(t,x),\nonumber\\
\Delta\bar{V}^{(c)}(t,x)&=&\bar{V}^{(c)}(t,x)-\bar{V}^{(c)}_{\pi}(t,x),\nonumber\\
V^{(c)}_{\pi}(t,x)&=&V^{(c)}_{\pi}(t_{j_{0}-1},x),\;t\in[t_{j_{0}-1},t_{j_{0}}),
\;j_{0}\in\{n_{0},n_{0}-1,...,1\},
\nonumber\\
\bar{V}^{(c)}_{\pi}(t,x)&=&\bar{V}^{(c)}_{\pi}(t_{j_{0}-1},x),\;
t\in[t_{j_{0}-1},t_{j_{0}}).\nonumber
\end{eqnarray}
for each $c\in\{0,1,...,M\}$.
\end{theorem}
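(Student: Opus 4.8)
The plan is to reduce \eq{errorest} to a backward discrete Gronwall inequality for each fixed differentiation order and then to control the local truncation terms by combining the solution regularity from Theorem~\ref{bsdeyI} with the path regularity of $\bar V$ supplied by the random field based Malliavin calculus of Section~\ref{errorIthp}. First I would fix an order $c\in\{0,1,\dots,M\}$ together with a multi-index $(i_{1},\dots,i_{p})\in{\cal I}^{c}$, differentiate the B-SPDE \eq{bspdef} to obtain the B-SPDE satisfied by the components $V^{(c)}_{i_{1}\dots i_{p}}$ and $\bar V^{(c)}_{i_{1}\dots i_{p}}$, integrate it over a single subinterval $[t_{j_{0}-1},t_{j_{0}}]$, and take conditional expectation given ${\cal F}_{t_{j_{0}-1}}$. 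Comparing the resulting exact identity with the recursion \eq{IIalgorithmI}--\eq{IIIalgorithmI} of Algorithm~\ref{algorithmI}, and using the ${\cal F}_{t_{j_{0}-1}}$-orthogonality of the increment $\Delta^{\pi}W_{j_{0}}$ together with the discrete It\^o isometry, produces exact expressions for the one-step errors $\Delta V^{(c)}(t_{j_{0}-1},x)$ and $\Delta\bar V^{(c)}(t_{j_{0}-1},x)$ in terms of $\Delta V^{(c)}(t_{j_{0}},x)$ plus three local error sources.

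Second I would bound the three sources. The left-endpoint rectangle approximation of the drift integral $\int_{t_{j_{0}-1}}^{t_{j_{0}}}{\cal L}^{(c)}\,ds$ contributes an $O(|\pi|^{2})$ per-step term once the time-regularity of ${\cal L}$ (encoded by the index $l$ in \eq{blipschitz}) is used. The replacement of the exact partial derivatives \eq{difoperatorI} by the forward and backward differences \eq{fdifII} contributes a spatial consistency error; since every function lies in $C^{\infty}(D,R^{q})$ and the orders are capped below $2\max\{n_{1},\dots,n_{p}\}$, a Taylor expansion yields an $O(|\pi|)$ bound here. The delicate source is the time-regularity of the martingale integrand: the representation in which ${\cal J}(t,x,V)-\bar V(t,x)$ coincides with the Malliavin derivative ${\cal D}_{t}V(t,x)$ lets me reduce the regularity of $\bar V^{(c)}$ to that of ${\cal D}_{t}V^{(c)}$ and of ${\cal J}^{(c)}$, and then invoke the existence and regularity of the first- and second-order Malliavin derivative based B-SPDEs established in Section~\ref{errorIthp} to obtain the key estimate $\sum_{j_{0}}E\int_{t_{j_{0}-1}}^{t_{j_{0}}}\|\bar V^{(c)}(t,x)-\bar V^{(c)}(t_{j_{0}-1},x)\|^{2}\,dt\le C|\pi|$, which upgrades the naive $O(1)$ control of the diffusion term to the rate claimed in \eq{errorest}.

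Third I would assemble these bounds into a recursion of the form $E\|\Delta V^{(c)}(t_{j_{0}-1},x)\|^{2}+\Delta^{\pi}_{j_{0}}E\|\Delta\bar V^{(c)}(t_{j_{0}-1},x)\|^{2}\le(1+C\Delta^{\pi}_{j_{0}})E\|\Delta V^{(c)}(t_{j_{0}},x)\|^{2}+\varepsilon_{j_{0}}$, where $\sum_{j_{0}}\varepsilon_{j_{0}}=O(|\pi|)$, using the generalized Lipschitz conditions \eq{blipschitz} and \eq{blipschitzo} to absorb the increments of ${\cal L}$ and ${\cal J}$ and Young's inequality to split the cross terms. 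Since the error vanishes at $j_{0}=n_{0}$ by the terminal prescriptions \eq{IalgorithmI} and \eq{termI}, applying the discrete Gronwall lemma backward from $j_{0}=n_{0}$ down to $j_{0}=1$ propagates the accumulated local errors into the uniform-in-time bound; summing over the finitely many orders $c\in\{0,\dots,M\}$ and maximizing over $x\in{\cal X}$ then yields \eq{errorest} with a constant depending only on $T$, $D$, and $\kappa$.

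The hard part will be the second step, namely making the Malliavin path-regularity estimate rigorous in the random field setting. In contrast to the finite-dimensional BSDE case, here ${\cal D}_{t}V^{(c)}$ must be controlled simultaneously across all partial-derivative orders inside the weighted $C^{\infty}$ norm \eq{inftynorm}, so one must verify that the second-order Malliavin derivative based B-SPDE is well-posed in the space carrying the decaying weights $\xi(c)$ and that the constants stay finite after the $\Lambda_{v}$-maximization \eq{notationI}. This well-posedness and the closability underlying the operator ${\cal D}$ are exactly what Lemma~\ref{closablelemma} and the companion existence results of Section~\ref{errorIthp} are designed to provide.
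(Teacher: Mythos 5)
Your strategy---exact one-step error identities plus a backward discrete Gronwall inequality, with Malliavin calculus supplying the regularity of $\bar V$---is the classical BSDE route of Zhang~\cite{zha:numsch} and Bouchard and Touzi~\cite{boutou:disapp}, and it is organized quite differently from the paper's proof, which avoids Gronwall altogether: the paper inserts two auxiliary processes $V_{\pi,0}$ and $V_{\pi,1}$ (an idealized one-step scheme and a local B-SPDE), shows they coincide via Clark--Ocone and Malliavin integration by parts (Lemma~\ref{clarklemma}), establishes \emph{pointwise-in-time} a priori regularity (Lemma~\ref{prioriest}), and then bounds each of the five difference terms in \eq{vestimate} and \eq{secondest} directly by $C|\pi|$. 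The difference in organization would be acceptable, but as written your argument does not deliver the full claim of Theorem~\ref{errorIth}: it has a genuine gap in the $\bar V$ half of \eq{errorest}.

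Concretely, the key estimate you extract from the Malliavin theory is the time-integrated regularity $\sum_{j_{0}}E\int_{t_{j_{0}-1}}^{t_{j_{0}}}\|\bar V^{(c)}(t,x)-\bar V^{(c)}(t_{j_{0}-1},x)\|^{2}dt\le C|\pi|$, and your recursion carries $\Delta^{\pi}_{j_{0}}E\|\Delta\bar V^{(c)}(t_{j_{0}-1},x)\|^{2}$ on its left-hand side. Summing and applying Gronwall therefore yields only $\sum_{j_{0}}\Delta^{\pi}_{j_{0}}E\|\Delta\bar V^{(c)}(t_{j_{0}-1},x)\|^{2}\le C|\pi|$, which for a (quasi-uniform) mesh gives merely an $O(1)$ bound on each individual $E\|\Delta\bar V^{(c)}(t_{j_{0}-1},x)\|^{2}$, whereas the theorem asserts $\sup_{t\in[0,T]}E\|\Delta\bar V^{(c)}(t,x)\|^{2}\le C|\pi|$; note also that for $t$ strictly inside a subinterval the term $\bar V^{(c)}(t,x)-\bar V^{(c)}(t_{j_{0}-1},x)$ must itself be controlled pointwise, not in integrated form. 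Closing this gap requires exactly the two stronger ingredients the paper develops: (i) the pointwise regularity $E\|\bar V(t)-\bar V(s)\|^{2}_{C^{c}(D,qd)}\le C(t-s)$, i.e.\ \eq{barvbarvineI} of Lemma~\ref{prioriest}, which rests on the second-order Malliavin identity \eq{equaldisII} of Lemma~\ref{lemmasm} (the first-order identity $\bar V=^{d}D_{t}V+{\cal J}$ only transfers the problem to the time-regularity of $D_{t}V$, which in turn needs $D_{s}D_{t}V$); and (ii) a pointwise representation of the algorithm's update \eq{IIIalgorithmI}, converting $\frac{1}{\Delta^{\pi}_{j_{0}}}E\left[\,\cdot\,\Delta^{\pi}W_{j_{0}}\left|{\cal F}_{t_{j_{0}-1}}\right.\right]$ into the time-average $\frac{1}{\Delta^{\pi}_{j_{0}}}E\left[\int_{t_{j_{0}-1}}^{t_{j_{0}}}(\bar V_{\pi,1}-{\cal J})ds\left|{\cal F}_{t_{j_{0}-1}}\right.\right]$, which is \eq{equalII} of Lemma~\ref{clarklemma} and is proved by Clark--Ocone plus Malliavin integration by parts; combined with (i) and Jensen's inequality this gives the per-point $O(|\pi|)$ bound. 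You invoke the second-order Malliavin theory only for well-posedness and never extract (i), and your first step stops at the It\^o-isometry level rather than at the representation (ii), so the $\sup_{t}$ bound on $\Delta\bar V^{(c)}$ does not follow from your recursion.
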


The proof of Theorem~\ref{errorIth} will be provided in
Section~\ref{errorIthp}.




\section{Proof of Theorem~\ref{bsdeyI}}\label{uniextproof}

We first prove three lemmas. Then, by combining these lemmas, we can
provide a proof for the theorem.

\subsection{Three Lemmas and Their Proofs}
\begin{lemma}\label{martindecom}
Under the conditions of Theorem~\ref{bsdeyI}, consider a tuplet for
each fixed $x\in D$,
\begin{eqnarray}
&&(U(\cdot,x),\bar{U}(\cdot,x))\in {\cal Q}^{2}_{{\cal
F}}([0,T]\times D).\elabel{threeu}
\end{eqnarray}
Then, there exists another tuplet $(V(\cdot,x),\bar{V}(\cdot,x))$
such that
\begin{eqnarray}
\;\;\;\;V(t,x)&=&H(x)+\int_{t}^{T}{\cal
L}(s,x,U,\cdot)ds+\int_{t}^{T}\left({\cal J}(s,x,U,\cdot)
-\bar{V}(s,x)\right)dW(s), \elabel{sigmanV}
\end{eqnarray}
where $V$ is a $\{{\cal F}_{t}\}$-adapted c\`adl\`ag process,
$\bar{V}$ is the corresponding predictable process. Furthermore, for
each $x\in D$,
\begin{eqnarray}
&&E\left[\int_{0}^{T}\|V(t,x)\|^{2}dt\right]<\infty,
\elabel{maxnormI}\\
&&E\left[\int_{0}^{T}\|\bar{V}(t,x)\|^{2}dt\right]<\infty.
\elabel{maxnormII}
\end{eqnarray}
\end{lemma}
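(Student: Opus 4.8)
The plan is to recognize \eq{sigmanV} as a \emph{linear} backward equation for each fixed $x\in D$: since the tuplet $(U,\bar U)$ is given, the coefficients ${\cal L}(s,x,U,\cdot)$ and ${\cal J}(s,x,U,\cdot)$ are known $\{{\cal F}_{s}\}$-adapted processes that do \emph{not} depend on the unknowns $V,\bar V$. Hence the construction reduces to an application of the martingale representation theorem rather than a genuine fixed-point argument. First I would fix $x\in D$ and introduce the $\mathcal F_T$-measurable $R^{q}$-valued random variable
$$G(x)\equiv H(x)+\int_{0}^{T}{\cal L}(s,x,U,\cdot)\,ds+\int_{0}^{T}{\cal J}(s,x,U,\cdot)\,dW(s).$$

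Next I would verify $G(x)\in L^{2}(\Omega,R^{q})$. For this I use the linear growth bounds \eq{blipschitzoI}--\eq{blipschitzI} at $c=l=o=0$, which dominate $\|{\cal L}(s,x,U,\cdot)\|$ and $\|{\cal J}(s,x,U,\cdot)\|$ pointwise in $x$ by $K_{D,0}(1+\|U(s)\|_{C^{k}(D,q)}+\|\bar U(s)\|_{C^{m}(D,qd)})$ and $K_{D,0}(1+\|U(s)\|_{C^{m}(D,q)})$ respectively. Since $\|f\|^{2}_{C^{c}(D,q)}\le\xi(c)^{-1}\|f\|^{2}_{C^{\infty}(D,q)}$ by \eq{inftynorm}, the membership $(U,\bar U)\in{\cal Q}^{2}_{{\cal F}}([0,T]\times D)$ from \eq{bunisoI} together with \eq{blipic} yields $E\int_{0}^{T}\|{\cal L}\|^{2}ds<\infty$ and $E\int_{0}^{T}\|{\cal J}\|^{2}ds<\infty$. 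Combining the Cauchy--Schwarz bound on the $ds$-integral with the It\^o isometry on the $dW$-integral, and using $H(x)\in L^{2}$ (which follows from $H\in L^{2}_{{\cal F}_{T}}(\Omega,C^{\infty}(D,R^{q}))$), I obtain $G(x)\in L^{2}$.

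I would then set $N(t)\equiv E[G(x)\mid{\cal F}_{t}]$, a square-integrable martingale on the Brownian filtration, and apply the martingale representation theorem to produce a predictable $R^{q\times d}$-valued integrand $\Psi(\cdot,x)$ with $N(t)=N(0)+\int_{0}^{t}\Psi(s,x)\,dW(s)$ and $E\int_{0}^{T}\|\Psi(s,x)\|^{2}ds<\infty$. Setting $\bar V(s,x)\equiv\Psi(s,x)$ and
$$V(t,x)\equiv N(t)-\int_{0}^{t}{\cal L}(s,x,U,\cdot)\,ds-\int_{0}^{t}{\cal J}(s,x,U,\cdot)\,dW(s),$$
I would check directly that $V(T,x)=H(x)$, and that subtracting the identity defining $V(t,x)$ from the one defining $V(T,x)$ reproduces \eq{sigmanV} exactly with $\bar V=\Psi$. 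Each of the three terms defining $V$ admits a continuous version in $t$ (continuity of $N$ on the Brownian filtration, and of the two Lebesgue/It\^o integrals), so $V(\cdot,x)$ is $\{{\cal F}_{t}\}$-adapted and c\`adl\`ag, in fact continuous, while $\bar V$ is predictable as the representation integrand.

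Finally, the integrability claims \eq{maxnormI}--\eq{maxnormII} follow from the same estimates: \eq{maxnormII} is immediate from the representation bound on $\Psi=\bar V$, while \eq{maxnormI} follows by applying Doob's $L^{2}$ inequality to $N$ together with the Cauchy--Schwarz and It\^o bounds already established for the two integral terms, giving $\sup_{t\in[0,T]}E\|V(t,x)\|^{2}<\infty$ and hence $E\int_{0}^{T}\|V(t,x)\|^{2}dt<\infty$. The only delicate step I anticipate is the $L^{2}$-integrability of $G(x)$, namely transferring the $C^{\infty}(D,\cdot)$-norm control supplied by ${\cal Q}^{2}_{{\cal F}}$ into a pointwise (fixed-$x$) bound through the growth conditions; once this is secured, the remainder is the classical linear-BSDE construction via martingale representation.
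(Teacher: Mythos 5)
Your proposal is correct and follows essentially the same route as the paper's own proof: both arguments treat \eq{sigmanV} as a linear equation with known coefficients, form the terminal random variable $H(x)+\int_{0}^{T}{\cal L}\,ds+\int_{0}^{T}{\cal J}\,dW$ (the paper's $\hat V(T,x)$, your $G(x)$), apply the martingale representation theorem on the Brownian filtration to extract the predictable integrand $\bar V$, and recover $V$ as the conditional-expectation process, checking the c\`adl\`ag and square-integrability claims from the growth conditions. The only difference is cosmetic: you define $V$ directly by the formula $N(t)-\int_{0}^{t}{\cal L}\,ds-\int_{0}^{t}{\cal J}\,dW$ and you spell out the $C^{\infty}$-to-$C^{c}$ norm transfer via $\xi(c)$, steps the paper states more tersely.
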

\begin{proof}
For each fixed $x\in D$ and a tuplet $(U(\cdot,x),$
$\bar{U}(\cdot,x))$ as stated in \eq{threeu}, it follows from
conditions \eq{blipschitz}-\eq{blipic} that
\begin{eqnarray}
&&{\cal L}(\cdot,x,U,\cdot)\in L^{2}_{{\cal
F}}([0,T],C^{\infty}(D,R^{q})),\;{\cal J}(\cdot,x,U,\cdot)\in
L^{2}_{{\cal F}}([0,T],C^{\infty}(D,R^{q\times d})).
\elabel{ladaptedI}
\end{eqnarray}
Now, consider ${\cal L}$ and ${\cal J}$ in \eq{ladaptedI} as two new
starting ${\cal L}(\cdot,x,0,\cdot)$ and ${\cal
J}(\cdot,x,0,\cdot)$. Then, by the Martingale representation theorem
(see, e.g., Theorem 43 in page 186 of Protter~\cite{pro:stoint}), we
know that there is a unique predictable process $\bar{V}(\cdot,x)$
which is square-integrable for each $x\in D$ in the sense of
\eq{maxnormII} such that
\begin{eqnarray}
\;\;\;\;\hat{V}(t,x)&\equiv& \left.E\left[H(x) +\int_{0}^{T}{\cal
L}(s,x,U,\cdot)ds+\int_{0}^{T}{\cal J}(s,x,U,\cdot)dW(s)\right|{\cal
F}_{t}\right]
\elabel{sigmanI}\\
&=&\hat{V}(0,x)+\int_{0}^{t}\bar{V}(s,x)dW(s). \nonumber
\end{eqnarray}
Hence, we have,
\begin{eqnarray}
\;\;\;\hat{V}(0,x)&=&\hat{V}(T,x)-\int_{0}^{T}\bar{V}(s,x)dW(s)
\elabel{sigmanII}\\
&=&H(x)+\int_{0}^{T}{\cal L}(s,x,U,\cdot)ds +\int_{0}^{T}\left({\cal
J}(s,x,U,\cdot)-\bar{V}(s,x)\right)dW(s). \nonumber
\end{eqnarray}
Furthermore, owing to the Corollary in page 8 of
Protter~\cite{pro:stoint}, $\hat{V}(\cdot,x)$ can be taken as a
c\`adl\`ag process. Next, define a process $V$ given by
\begin{eqnarray}
V(t,x)&=& E\left[H(x) +\int_{t}^{T}{\cal
L}(s,x,U,\cdot)ds+\left.\int_{t}^{T}{\cal J}(s,x,U,\cdot)dW(s)
\right|{\cal F}_{t}\right]. \elabel{sigmanIII}
\end{eqnarray}
Then, it follows from \eq{blipschitzo}-\eq{blipschitzI} and simple
calculation that $V(\cdot,x)$ is square-integrable in the sense of
\eq{maxnormI}. Furthermore, by \eq{sigmanI}-\eq{sigmanIII}, we know
that
\begin{eqnarray}
V(t,x)&=&\hat{V}(t,x)-\int_{0}^{t}{\cal L}(s,x,U,\cdot)ds
-\int_{0}^{t}{\cal J}(s,x,U,\cdot)dW(s) \elabel{sigmanIV}
\end{eqnarray}
which indicates that $V(\cdot,x)$ is a c\`adl\`ag process. Now, for
a given tuplet $(U(\cdot,x),$ $\bar{U}(\cdot,x))$, it follows from
\eq{sigmanI}-\eq{sigmanII} and \eq{sigmanIV} that the corresponding
tuplet $(V(\cdot,x),$ $\bar{V}(\cdot,x))$ satisfies the equation
\eq{sigmanV} as stated in the lemma. Thus, we know that
\begin{eqnarray}
\;\;\;\;\;\;V(t,x)&\equiv&V(0,x)-\int_{0}^{t}{\cal L}(s,x,U,\cdot)ds
-\int_{0}^{t}\left({\cal J}(s,x,U,\cdot) -\bar{V}(s,x)\right)dW(s)
\elabel{sigmanVI}
\end{eqnarray}
Hence, we complete the proof of Lemma~\ref{martindecom}. $\Box$
\end{proof}
\begin{lemma}\label{differentiableV}
Under the conditions of Theorem~\ref{bsdeyI}, consider a tuplet as
in \eq{threeu} for each fixed $x\in D$ and define $V(t,x)$ and
$\bar{V}(t,x)$ by \eq{sigmanV}. Then, $(V^{(c)}(\cdot,x)$,
$\bar{V}^{(c)}(\cdot,x)$) for each $c\in\{0,1,...,\}$ exists a.s.
and satisfies
\begin{eqnarray}
V^{(c)}_{(i_{1}...i_{p})}(t,x)&=&H^{(c)}_{(i_{1}...i_{p})}(x)
+\int_{t}^{T}{\cal L}^{(c)}_{(i_{1}...i_{p})}(s,x,U,\cdot)ds
\elabel{pdsigmanV}\\
&&+\int_{t}^{T}\left({\cal J}^{(c)}_{(i_{1}...i_{p})}(s,x,U,\cdot)
-\bar{V}^{(c)}_{(i_{1}...i_{p})}(s,x)\right)dW(s), \nonumber
\end{eqnarray}
where $i_{1}+...+i_{p}=c$ and $i_{l}\in\{0,1,...,c\}$ with
$l\in\{1,...,p\}$. Furthermore,  $V^{(c)}_{(i_{1}...i_{p})}$ for
each $c\in\{0,1,...\}$ is a $\{{\cal F}_{t}\}$-adapted c\`adl\`ag
process, and $\bar{V}^{(c)}_{(i_{1}...i_{p})}$ is the corresponding
predictable processes. Both of them are square-integrable in the
senses of \eq{maxnormI}-\eq{maxnormII}.
\end{lemma}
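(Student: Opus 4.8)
The plan is to obtain \eq{pdsigmanV} by differentiating the defining representation \eq{sigmanIII} of $V(t,x)$ in the spatial variable $x$, proceeding by induction on the differentiation order $c$. The base case $c=0$ is exactly \Lemma{martindecom}. For the inductive step, suppose the claim holds up to order $c$, so that $V^{(c)}_{(i_1\ldots i_p)}$ and $\bar V^{(c)}_{(i_1\ldots i_p)}$ exist, solve \eq{pdsigmanV}, and enjoy the stated adaptedness and square-integrability. Fixing a coordinate direction $e_l$, I would form the spatial difference quotient $\Delta^{-1}\big(V^{(c)}_{(i_1\ldots i_p)}(t,x+\Delta e_l)-V^{(c)}_{(i_1\ldots i_p)}(t,x)\big)$ of both sides of \eq{pdsigmanV} and let $\Delta\to 0$. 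Because $H$, $\mathcal L(\cdot,x,U,\cdot)$ and $\mathcal J(\cdot,x,U,\cdot)$ all take values in the $C^{\infty}$ spaces of \eq{cinfity}, every one of their $x$-derivatives exists pointwise, and the mean value theorem identifies each difference quotient with a first derivative at an intermediate point; the generalized Lipschitz and linear-growth bounds \eq{blipschitz}--\eq{blipschitzI} then supply a dominating function uniform in $\Delta$.

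The core of the argument is to pass to the limit $\Delta\to 0$ inside the three operations that build $V$ from its data: the conditional expectation $E[\,\cdot\mid\mathcal F_t]$, the Lebesgue integral $\int_t^T\cdots ds$, and the It\^o integral $\int_t^T\cdots dW(s)$. For the first two, conditional dominated convergence with the dominating function just described yields the limit, so the difference quotient of $H^{(c)}$ converges to the appropriate $H^{(c+1)}$ and that of $\int_t^T\mathcal L^{(c)}\,ds$ to $\int_t^T\mathcal L^{(c+1)}\,ds$. For the It\^o term I would invoke the It\^o isometry, which reduces the $L^2(\Omega)$ convergence of the stochastic integral to the $L^2([t,T]\times\Omega)$ convergence of its integrand; the latter again follows from the $C^{\infty}$-regularity together with \eq{blipschitzo} and \eq{blipschitzI}. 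Simultaneously, applying the martingale representation theorem to the differentiated martingale $\hat V^{(c+1)}$ (exactly as in \eq{sigmanI}--\eq{sigmanII}) produces the predictable process $\bar V^{(c+1)}$ and identifies it as the $L^2$-limit of the difference quotients of $\bar V^{(c)}$. Collecting the three limits gives \eq{pdsigmanV} at order $c+1$.

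The remaining regularity assertions follow by the reasoning already used for \Lemma{martindecom}. Since $V^{(c+1)}_{(i_1\ldots i_p)}(t,x)$ is a conditional expectation of $\mathcal F_T$-data plus integrals over $[t,T]$, it is $\{\mathcal F_t\}$-adapted and admits a c\`adl\`ag modification by the corollary of Protter invoked in the proof of \Lemma{martindecom}; $\bar V^{(c+1)}_{(i_1\ldots i_p)}$ is predictable as the integrand in a martingale representation; and the square-integrability \eq{maxnormI}--\eq{maxnormII} at the new order is inherited from the finiteness of the $C^{\infty}$-norm \eq{inftynorm} together with the growth bounds \eq{blipschitzoI} and \eq{blipschitzI}, just as the order-zero estimates were obtained. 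Induction over $c\in\{0,1,\ldots\}$ then completes the proof.

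The step I expect to be the main obstacle is the interchange of the spatial difference quotient with the It\^o integral. Conditional expectation and the Lebesgue integral tolerate ordinary dominated convergence, but the stochastic integral demands $L^2([t,T]\times\Omega)$ convergence of the integrands and, crucially, the simultaneous verification that the difference quotients of the a priori only predictable, square-integrable process $\bar V^{(c)}$ actually converge to a process $\bar V^{(c+1)}$ of the same class. Closing this loop requires bootstrapping through the martingale representation of $\hat V^{(c+1)}$ rather than differentiating $\bar V$ directly, and it is here that the dependence on $c$ of the constants $K_{D,c}$ in \eq{blipschitz}--\eq{blipschitzI} must be tracked carefully to keep all norms finite.
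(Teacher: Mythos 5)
Your overall strategy coincides with the paper's own proof: both define the candidate derivative pair at the next order by applying the martingale-representation construction of \Lemma{martindecom} to the differentiated data $H^{(c+1)}$, ${\cal L}^{(c+1)}$, ${\cal J}^{(c+1)}$ (i.e., \eq{sigmanV} with the coefficients replaced by their partial derivatives), then compare this candidate with the spatial difference quotients of $(V^{(c)},\bar{V}^{(c)})$, using the mean value theorem together with the generalized Lipschitz and growth conditions \eq{blipschitz}--\eq{blipschitzI} for domination, and induct on $c$. Your device for the $\bar{V}$-component --- pushing the problem onto the martingale integrand via the It\^o isometry and subtracting the convergent ${\cal J}$ difference quotients --- is also the same mechanism the paper exploits in \eq{difsucII}. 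The regularity and square-integrability assertions are handled identically in both arguments.

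The genuine gap is the mode of convergence. Conditional dominated convergence and the It\^o isometry give convergence of the difference quotients in $L^{2}(\Omega)$ (resp. $L^{2}([t,T]\times\Omega)$), and hence a.s. convergence only along subsequences of any prescribed sequence $\delta_{n}\rightarrow 0$. But the lemma asserts that the partial derivatives exist a.s., i.e., that for almost every $\omega$ the difference quotient converges as the \emph{continuous} parameter $\delta\rightarrow 0$; subsequential a.s. convergence does not deliver this. The paper's proof is built precisely to close this hole: it applies It\^o's formula to the weighted squared error $\zeta(\Delta V^{(1)}_{(l),\delta}(t,x))=\mbox{Tr}\left(\Delta V^{(1)}_{(l),\delta}(t,x)\right)e^{2\gamma t}$, tunes $\hat{\gamma}$ as in \eq{thgamma} so that the Lipschitz terms are absorbed (yielding the energy estimate \eq{pvitod}), and then invokes Lemma 1.3 of Peskir and Shiryaev~\cite{pesshi:optsto} to bound $E\left[ess\sup_{0\leq\delta\leq\sigma}\zeta(\Delta V^{(1)}_{(l),\delta}(t,x))\right]$ as in \eq{difsuco}; since this essential supremum is monotone in $\sigma$, the dominated-convergence and Fatou steps \eq{difsucI}--\eq{zetazero} force the error to vanish a.s. uniformly over all sufficiently small $\delta$, which is exactly what licenses the a.s. existence of $V^{(1)}_{(l)}$, and, via \eq{difsucII}, of $\bar{V}^{(1)}_{(l)}$. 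Your proposal would need to be supplemented by this (or an equivalent) uniform-in-$\delta$ argument; as written, it identifies the candidates only as $L^{2}$-limits, which is strictly weaker than the claim of the lemma.
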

\begin{proof}
First, we prove the claim in the lemma to be true for $c=1$. To do
so, for each given $t\in[0,T],x\in D$, and $(U(t,x),\bar{U}(t,x))$
as in the lemma, let
\begin{eqnarray}
&&(V^{(1)}_{(l)}(t,x),\bar{V}^{(1)}_{(l)}(t,x)) \elabel{tripletdv}
\end{eqnarray}
be defined by using \eq{sigmanV}, where ${\cal L}$ and ${\cal J}$
are replaced by their first-order partial derivatives ${\cal
L}^{(1)}_{(l)}$ and ${\cal J}^{(1)}_{(l)}$ in terms of $x_{l}$ with
$l\in\{1,...,p\}$. Then, we can prove that the tuplet defined in
\eq{tripletdv} for each $l$ is indeed the required first-order
partial derivative of $(V,\bar{V})$ that is defined by using
\eq{sigmanV} for the given $(U,\bar{U})$.

In fact, for each $f\in\{U,\bar{U},V,\bar{V}\}$, sufficiently small
positive constant $\delta$, and $l\in\{1,...,p\}$, define
\begin{eqnarray}
&&f_{(l),\delta}(t,x)\equiv f(t,x+\delta e_{l}),\elabel{fdeltan}
\end{eqnarray}
where $e_{l}$ is the unit vector whose $l$th component is one and
others are zero. Furthermore, let
\begin{eqnarray}
&&\Delta f^{(1)}_{(l),\delta}(t,x)
=\frac{f_{(l),\delta}(t,x)-f(t,x)}{\delta}-f^{(1)}_{(l)}(t,x)
\elabel{partialII}
\end{eqnarray}
for each $f\in\{U,\bar{U},V,\bar{V}\}$. In addition, let
\begin{eqnarray}
\Delta{\cal I}^{(1)}_{(l),\delta}(s,x,U)
&=&\frac{1}{\delta}\left({\cal I}(s,x+\delta e_{l},U(s,x+\delta
e_{l}),\cdot)-{\cal I}(s,x,U(s,x),\cdot)\right)
\elabel{deltasqr}\\
&&-{\cal I}^{(1)}_{(l)}(s,x,U(s,x),\cdot) \nonumber
\end{eqnarray}
for each ${\cal I}\in\{{\cal L},{\cal J}\}$, and let Tr$(A)$ denote
the trace of the matrix $A'A$ for a given matrix $A$. Then, by
applying \eq{sigmanVI} and the Ito's formula (see, e.g., Theorem 33
in page 81 of Protter~\cite{pro:stoint}) to the function
\begin{eqnarray}
&&\zeta(\Delta V^{(1)}_{(l),\delta}(t,x))\equiv\mbox{Tr}\left(\Delta
V_{(l),\delta}^{(1)}(t,x)\right)e^{2\gamma t}\nonumber
\end{eqnarray}
for some $\gamma>0$,  we see that
\begin{eqnarray}
&&\zeta(\Delta V^{(1)}_{(l),\delta}(t,x))+\int_{t}^{T}
\mbox{Tr}\left(\Delta{\cal J}^{(1)}_{(l),\delta}(s,x,U)
-\Delta\bar{V}^{(1)}_{(l),\delta}(s,x) \right)e^{2\gamma s}ds
\elabel{pvitod}\\
&=&2\int_{t}^{T}\left(-\gamma\mbox{Tr}\left(\Delta
V_{(l),\delta}^{(1)}(s,x)\right)+\left(\Delta
V_{(l),\delta}^{(1)}(s,x)\right)' \left(\Delta{\cal
L}^{(1)}_{(l),\delta}(s,x,U)\right)\right)e^{2\gamma
s}ds-M_{\delta}(t)
\nonumber\\
&\leq&\left(-2\gamma+\frac{3K^{2}_{D,1}}{\hat{\gamma}}\right)
\int_{t}^{T}\mbox{Tr}\left(\Delta
V_{(l),\delta}^{(1)}(s,x)\right)e^{2\gamma s}ds
+\hat{\gamma}\int_{t}^{T}\left\|\Delta{\cal
L}^{(1)}_{(l),\delta}(s,x,U)\right\|^{2}e^{2\gamma
s}ds-M_{\delta}(t)
\nonumber\\
&=&\hat{\gamma}\int_{t}^{T}\left\|\Delta{\cal
L}^{(1)}_{(l),\delta}(s,x,U)\right\|^{2}e^{2\gamma
s}ds-M_{\delta}(t) \nonumber
\end{eqnarray}
if, in the last equality, we take
\begin{eqnarray}
\hat{\gamma}=\frac{3K_{D,1}^{2}}{2\gamma}>0, \elabel{thgamma}
\end{eqnarray}
where $K_{D,1}$ is defined in \eq{blipschitz}-\eq{blipschitzI} and
$M_{\delta}(t)$ is a martingale given by
\begin{eqnarray}
2\sum_{j=1}^{d}\int_{t}^{T}\left(\Delta
V^{(1)}_{(l),\delta}(s,x)\right)'\left(\Delta({\cal
J}_{j})_{(l),\delta}^{(1)}(s,x,U)
-\Delta(\bar{V}_{j})_{(l),\delta}^{(1)}(s,x)\right)e^{2\gamma
s}dW_{j}(s). \nonumber
\end{eqnarray}

Next, by Lemma 1.3 in Peskir and Shiryaev~\cite{pesshi:optsto},
there is a sequence of $\{\delta_{n},n=1,2,...\}\subset[0,\sigma]$
for each $t\in[0,T]$ and $\sigma>0$ such that
\begin{eqnarray}
&&E\left[ess\sup_{0\leq\delta\leq\sigma}\zeta(\Delta
V^{(1)}_{(l),\delta}(t,x))\right] \elabel{difsuco}\\
&=&E\left[ess\sup_{\{\delta_{n}:0\leq\delta_{n}\leq\sigma,n=1,2,...\}}
\zeta(\Delta V^{(1)}_{(l),\delta_{n}}(t,x))\right]\nonumber\\
&=&\lim_{n\rightarrow\infty}E\left[\zeta(\Delta V^{(1)}_{(l),
\delta_{n}}(t,x))\right]\nonumber\\
&\leq& \hat{\gamma}\lim_{n\rightarrow\infty}
E\left[\int_{t}^{T}\left\|\Delta{\cal
L}^{(1)}_{(l),\delta_{n}}(s,x,U)\right\|^{2}e^{2\gamma
s}ds\right]-\lim_{n\rightarrow\infty}E\left[M_{\delta_{n}}(t)\right]
\nonumber\\
&\leq&
\hat{\gamma}E\left[\int_{t}^{T}ess\sup_{0\leq\delta\leq\sigma}
\left\|\Delta{\cal
L}^{(1)}_{(l),\delta}(s,x,U)\right\|^{2}e^{2\gamma s}ds\right],
\nonumber
\end{eqnarray}
where ``esssup'' denotes the essential supremum. Furthermore, the
first inequality in \eq{difsuco} is owing to \eq{pvitod}. Thus, by
the Lebesgue's dominated convergence theorem, we have
\begin{eqnarray}
&&\lim_{\sigma\rightarrow
0}E\left[ess\sup_{0\leq\delta\leq\sigma}\zeta(\Delta
V^{(1)}_{(l),\delta}(t,x))\right] \elabel{difsucI}\\
&\leq& \hat{\gamma}E\left[\int_{t}^{T}\lim_{\sigma\rightarrow
0}ess\sup_{0\leq\delta\leq\sigma} \left\|\Delta{\cal
L}^{(1)}_{(l),\delta}(s,x,U)\right\|^{2}e^{2\gamma s}ds\right],
\nonumber
\end{eqnarray}
where we have used the following fact owing to the mean-value
theorem and the conditions stated in \eq{blipschitzo},
\begin{eqnarray}
&&\left\|\Delta{\cal L}^{(1)}_{(l),\delta}(t,x,U)\right\| \leq
2K_{D,1}\left(\|U\|_{C^{k+1}(D,q)}+\|\bar{U}\|_{C^{m+1}(D,qd)}\right).
\nonumber
\end{eqnarray}
Thus, it follows from \eq{difsucI} and the Fatou's lemma that, for
any sequence $\sigma_{n}$ satisfying $\sigma_{n}\rightarrow 0$ along
$n\in{\cal N}$, there is a subsequence ${\cal N}'\subset{\cal N}$
such that
\begin{eqnarray}
&&ess\sup_{0\leq\delta\leq\sigma_{n}}\zeta(\Delta
V^{(1)}_{(l),\delta}(t,x))\rightarrow 0\;\;\mbox{along}\;\;n\in{\cal
N}'\;\;\mbox{a.s.}. \elabel{zetazero}
\end{eqnarray}
Therefore, we know that the first-order derivative of $V$ with
respect to $x_{l}$ for each $l\in\{1,...,p\}$ exists and equals
$V_{(l)}^{(1)}(t,x)$ a.s. for each $t\in[0,T]$ and $x\in D$.
Furthermore, it is $\{{\cal F}_{t}\}$-adapted. Now, by applying the
similar proof as used in \eq{difsuco}, we have
\begin{eqnarray}
&&\lim_{\sigma\rightarrow
0}E\left[\int_{t}^{T}ess\sup_{0\leq\delta\leq\sigma}
\mbox{Tr}\left(\Delta{\cal J}^{(1)}_{(l),\delta}(s,x,U)
-\Delta\bar{V}^{(1)}_{(l),\delta}(s,x) \right)e^{2\gamma s}ds\right]
\elabel{difsucII}\\
&\leq& \hat{\gamma}E\left[\int_{t}^{T}\lim_{\sigma\rightarrow
0}ess\sup_{0\leq\delta\leq\sigma} \left\|\Delta{\cal
L}^{(1)}_{(l),\delta}(s,x,U)\right\|^{2}e^{2\gamma s}ds\right].
\nonumber
\end{eqnarray}
Hence, it follows from \eq{zetazero} and \eq{difsucII} that
\begin{eqnarray}
&&\lim_{\delta\rightarrow
0}\Delta\bar{V}^{(1)}_{(l),\delta}(t,x)=\lim_{\delta\rightarrow
0}\Delta{\cal J}^{(1)}_{(l),\delta}(t,x,U)=0.\;\;\;\mbox{a.s.}
\nonumber
\end{eqnarray}
Thus, we know that the first-order derivative of $\bar{V}$ in terms
of $x_{l}$ for each $l\in\{1,...,p\}$ exists and equals
$\bar{V}^{(1)}_{(l)}(t,x)$ a.s. for every $t\in[0,T]$ and $x\in D$.
Furthermore, it is a $\{{\cal F}_{t}\}$-predictable process.

Second, supposing that $(V^{(c-1)}(t,x),$ $\bar{V}^{(c-1)}(t,x))$
associated with a given $(U(t,x),$ $\bar{U}(t,x))$ $\in{\cal
Q}^{2}_{{\cal F}}([0,T])$ exists for any given $c\in\{1,2,...\}$.
Then, we can prove that
\begin{eqnarray}
&&\left(V^{(c)}(t,x), \bar{V}^{(c)}(t,x)\right)
\elabel{cthderivative}
\end{eqnarray}
exists for the given $c$. In doing so, for any fixed nonnegative
integer numbers $i_{1},...,i_{p}$ satisfying $i_{1}+...+i_{p}=c-1$
for the given $c\in\{1,2,...\}$, any $f\in\{V,\bar{V}\}$, any
$l\in\{1,...,p\}$, and any small enough $\delta>0$,  we define
\begin{eqnarray}
&&f_{(i_{1}...(i_{l}+1)...i_{p}),\delta}^{(c-1)}(t,x)\equiv
f_{(i_{1}...i_{p})}^{(c-1)}(t,x+\delta e_{l}),\elabel{fdeltan}
\end{eqnarray}
which corresponds to ${\cal I}^{(c-1)}_{(i_{1}...i_{p})}(s,x+\delta
e_{l},U(s,x+\delta e_{l}),\cdot)$ with ${\cal I}\in\{{\cal L},{\cal
J}\}$ via \eq{sigmanV}, where the differential operators ${\cal L}$
and ${\cal J}$ are replaced by their $(c-1)$th-order partial
derivatives ${\cal L}^{(c-1)}_{(i_{1}...i_{p})}$ and ${\cal
J}^{(c-1)}_{(i_{1}...i_{p})}$. Similarly, let
\begin{eqnarray}
(V^{(c)}_{(i_{1}...(i_{l}+1)...i_{p})}(t,x),\;
\bar{V}^{(c)}_{(i_{1}...(i_{l}+1)...i_{p})}(t,x)) \nonumber
\end{eqnarray}
be defined by using \eq{sigmanV}, where ${\cal L}$ and ${\cal J}$
are replaced by their $c$th-order partial derivatives ${\cal
L}^{(c)}_{i_{1}...(i_{l}+1)...i_{p}}$ and ${\cal
J}^{(c)}_{i_{1}...(i_{l}+1)...i_{p}}$ corresponding to a given $t,x,
U(t,x),$ $\bar{U}(t,x)$. Furthermore, set
\begin{eqnarray}
&&\Delta f^{(c)}_{(i_{1}...(i_{l}+1)...i_{p}),\delta}(t,x)
=\frac{f^{(c-1)}_{(i_{1}...(i_{l}+1)...i_{p}),\delta}(t,x)
-f^{(c-1)}_{(i_{1}...i_{p})}(t,x)}{\delta}
-f^{(c)}_{(i_{1}...(i_{l}+1)...i_{p})}(t,x) \elabel{ipartialII}
\end{eqnarray}
for each $f\in\{U,\bar{U},V,\bar{V}\}$, and let
\begin{eqnarray}
&&\Delta{\cal I}^{(c)}_{(i_{1}...(i_{l}+1)...i_{p}),\delta}(t,x,U)
\elabel{ideltasqr}\\
&=&\frac{1}{\delta}\left({\cal
I}^{(c-1)}_{(i_{1}...i_{p})}(t,x+\delta e_{l},U(t,x+\delta
e_{l}),\cdot)-{\cal
I}^{(c-1)}_{(i_{1}...i_{p})}(s,x,U(s,x),\cdot)\right)\nonumber\\
&&-{\cal I}^{(c)}_{(i_{1}...(i_{l}+1)...i_{p})}(s,x,U(s,x),\cdot)
\nonumber
\end{eqnarray}
for ${\cal I}\in\{{\cal L},{\cal J}\}$. Then, by the It$\hat{o}$'s
formula and repeating the procedure as used in the second step, we
know that
\begin{eqnarray}
(V^{(c)}_{(i_{1}...(i_{l}+1)...i_{p})}(t,x),\;
\bar{V}^{(c)}_{(i_{1}...(i_{l}+1)...i_{p})}(t,x)) \nonumber
\end{eqnarray}
exist for the given $c\in\{1,2,...\}$ and all $l\in\{1,...,p\}$.
Thus, we know that the claim in \eq{cthderivative} is correct.

Third, by the induction method in terms of $c\in\{1,2,...\}$, we
know that the claims stated in the lemma are right. Therefore, we
complete the proof of Lemma~\ref{differentiableV}. $\Box$
\end{proof}

To state our next lemma, we let $D_{{\cal
F}}^{2}([0,T],C^{\infty}(D,R^{q}))$ be the set of $R^{q}$-valued
$\{{\cal F}_{t}\}$-adapted and square integrable c\`adl\`ag
processes as in \eq{adaptednormI}. Furthermore, for any given number
sequence $\gamma=\{\gamma_{c},c=0,1,2,...\}$ with $\gamma_{c}\in R$,
define ${\cal M}^{D}_{\gamma}[0,T]$ to be the following Banach space
(see, e.g., the similar explanation as used in Yong and
Zhou~\cite{yonzho:stocon}, and Situ~\cite{sit:solbac})
\begin{eqnarray}
{\cal M}^{D}_{\gamma}[0,T]&=&D^{2}_{{\cal
F}}([0,T],C^{\infty}(D,R^{q}))\times L^{2}_{{\cal
F},p}([0,T],C^{\infty}(D,R^{q\times d}))
\elabel{combanach}
\end{eqnarray}
endowed with the norm: for any given $(U,\bar{U})\in{\cal
M}^{D}_{\gamma}[0,T]$,
\begin{eqnarray}
\;\;\;\left\|(U,\bar{U})\right\|^{2}_{{\cal M}^{D}_{\gamma}}
&\equiv& \sum_{c=0}^{\infty}\xi(c)\left\|(U,\bar{U})
\right\|^{2}_{{\cal M}^{D}_{\gamma_{c},c}}, \elabel{comnorm}
\end{eqnarray}
where, without loss of generality, we assume that $m=k$ in
\eq{bspdef} and
\begin{eqnarray}
\;\;\;\;\left\|(U,\bar{U})\right\|^{2}_{{\cal
M}^{D}_{\gamma_{c},c}}&=&E\left[\sup_{0\leq t\leq
T}\left\|U(t)\right\|^{2}_{C^{c}(D,q)}e^{2\gamma_{c}t}\right]
+E\left[\int_{0}^{T}
\left\|\bar{U}(t)\right\|^{2}_{C^{c}(D,qd)}e^{2\gamma_{c}t}dt\right].
\elabel{kcomnorm}
\end{eqnarray}
Then, we have the following lemma.
\begin{lemma}\label{lemmathree}
Under the conditions of Theorem~\ref{bsdeyI}, all the claims in the
theorem are true.
\end{lemma}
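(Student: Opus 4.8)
The plan is to realize the claimed unique existence as a fixed point of the solution map implicitly constructed in the two preceding lemmas. Specifically, Lemma~\ref{martindecom} and Lemma~\ref{differentiableV} together define, for any input tuplet $(U,\bar{U})\in{\cal M}^{D}_{\gamma}[0,T]$, an output tuplet $(V,\bar{V})$ through the backward equation \eq{sigmanV}, with all partial derivatives $(V^{(c)},\bar{V}^{(c)})$ existing, adapted (resp.\ predictable), and square-integrable. Denote this map by $\Phi:(U,\bar{U})\mapsto(V,\bar{V})$. The entire theorem will follow once I show that $\Phi$ maps ${\cal M}^{D}_{\gamma}[0,T]$ into itself and is a contraction in the norm $\|\cdot\|_{{\cal M}^{D}_{\gamma}}$ defined in \eq{comnorm}, for a suitable choice of the exponential-weight sequence $\gamma=\{\gamma_c\}$. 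Then the Banach fixed point theorem yields a unique fixed point $(V,\bar{V})$, which by construction satisfies \eq{bspdef} and lies in ${\cal Q}^2_{{\cal F}}([0,T]\times D)$, giving exactly \eq{buniso}.

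First I would fix a differential order $c$ and estimate the difference of two outputs at level $c$. Given two inputs $(U^1,\bar{U}^1)$ and $(U^2,\bar{U}^2)$ with outputs $(V^1,\bar{V}^1)$ and $(V^2,\bar{V}^2)$, set $\Delta V^{(c)}=V^{1,(c)}-V^{2,(c)}$ and similarly for the barred quantities. Applying Ito's formula to $\mathrm{Tr}(\Delta V^{(c)}(t,x))e^{2\gamma_c t}$ exactly as in the proof of Lemma~\ref{differentiableV} (equation \eq{pvitod}), and then invoking the generalized Lipschitz bounds \eq{blipschitz}--\eq{blipschitzo} for ${\cal L}^{(c)}$ and ${\cal J}^{(c)}$, I obtain an inequality controlling $\sup_t E[\|\Delta V^{(c)}(t)\|^2 e^{2\gamma_c t}]$ and $E[\int_0^T\|\Delta\bar{V}^{(c)}(t)\|^2 e^{2\gamma_c t}dt]$ by a constant times $K_{D,c}^2/\gamma_c$ applied to $\|(U^1,\bar{U}^1)-(U^2,\bar{U}^2)\|^2_{{\cal M}^D_{\gamma_c,c}}$. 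The key point, as in \eq{thgamma}, is that choosing $\gamma_c$ large enough relative to $K_{D,c}$ makes the per-level contraction factor strictly less than $1$; the same choice simultaneously guarantees the self-mapping property via the linear-growth bounds \eq{blipschitzoI}--\eq{blipschitzI}.

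The main obstacle, and the reason the weighted $C^\infty$-norm \eq{inftynorm} with the fast-decaying $\xi(c)$ was introduced, is that the Lipschitz constant $K_{D,c}$ is \emph{not} uniform in $c$ and may blow up as $c\to\infty$. Consequently a single $\gamma$ cannot contract all levels at once, so I must let $\gamma_c$ grow with $c$ and then verify that the assembled norm $\sum_{c=0}^\infty\xi(c)\|\cdot\|^2_{{\cal M}^D_{\gamma_c,c}}$ remains finite and that the weighted sum of per-level contraction factors still produces a global contraction. Here the structure of the difference operator is crucial: the level-$c$ estimate involves the level-$(k+c)$ norms of the inputs (because ${\cal L}$ depends on derivatives up to order $k$), so I would use the decay of $\xi$ to absorb the index shift $c\mapsto k+c$ and the growth of $K_{D,c}$ simultaneously, choosing $\gamma_c$ so that $\xi(c)K_{D,c}^2/\gamma_c \le \tfrac12\,\xi(c+k)$ (or an analogous summable balance). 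Controlling this interaction between the growing Lipschitz constants, the index shift, and the decaying weights is the delicate quantitative step.

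Once the contraction in $\|\cdot\|_{{\cal M}^D_{\gamma}}$ is established, the remainder is routine: the Banach fixed point theorem gives a unique $(V,\bar{V})\in{\cal M}^D_{\gamma}[0,T]$ fixed by $\Phi$, hence solving \eq{bspdef}; uniqueness in the larger space ${\cal Q}^2_{{\cal F}}([0,T]\times D)$ follows because any two solutions are both fixed points and the contraction forces them to coincide; and the regularity and integrability of every derivative $(V^{(c)},\bar{V}^{(c)})$ is inherited directly from Lemma~\ref{differentiableV}. This completes the proof of Lemma~\ref{lemmathree}, and since its statement asserts precisely that all claims of Theorem~\ref{bsdeyI} hold, it finishes the proof of the theorem.
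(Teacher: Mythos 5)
Your overall strategy coincides with the paper's: both build the solution map $\Xi:(U,\bar U)\mapsto(V,\bar V)$ from \eq{sigmanV} via Lemmas~\ref{martindecom}--\ref{differentiableV}, work in the weighted space ${\cal M}^{D}_{\gamma}[0,T]$, and handle the interplay between the growing constants $K_{D,c}$ and the index shift $c\mapsto k+c$ by letting $\gamma_{c}$ grow and letting the fast-decaying weights $\xi(c)$ absorb the shift (this is exactly \eq{clastine}--\eq{infinityine} in the paper). The genuine gap is in your central quantitative claim: it is \emph{not} true that $E[\int_{0}^{T}\|\Delta\bar V^{(c)}(t)\|^{2}e^{2\gamma_{c}t}dt]$ is bounded by a constant times $K_{D,c}^{2}/\gamma_{c}$ times the input difference, so $\Xi$ is not a one-step contraction in the equal-weight norm \eq{comnorm}--\eq{kcomnorm}, and the Banach fixed point theorem cannot be invoked as you describe. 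The It\^{o}-formula argument with weight $e^{2\gamma_{c}t}$ produces the small factor $\hat{\gamma}_{c}\sim K_{D,c}^{2}/\gamma_{c}$ only on terms entering through the drift ${\cal L}$; on the martingale side it controls the combination $E\int\|\Delta{\cal J}-\Delta\bar V\|^{2}e^{2\gamma_{c}s}ds$ (see \eq{pvitod}, \eq{eupvitod}), and to recover $\Delta\bar V$ itself you must add back $\Delta{\cal J}={\cal J}(\cdot,U^{1})-{\cal J}(\cdot,U^{2})$, which by \eq{blipschitzo} is only of size $K_{D,c}\|U^{1}-U^{2}\|_{C^{m+c}(D,q)}$, with no $1/\gamma_{c}$ gain. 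Concretely: take ${\cal L}=0$, $H=0$, ${\cal J}(u)=u$, so that $\Xi(U,\bar U)=(0,U)$; with input difference $\Delta U(s)=e^{-\gamma s}$, $\Delta\bar U=0$, the output norm equals $T$ while the input norm equals $1$, for every $\gamma$. Hence whenever $K_{D,c}^{2}T\geq 1$, no choice of $\gamma_{c}$ (nor of $\xi$) makes the one-step map contractive.

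This is precisely why the paper does not argue via a one-step contraction. It runs the Picard iteration $(U^{i+1},\bar U^{i+1})=\Xi(U^{i},\bar U^{i})$ and exploits the fact that the problematic term $E\int\|{\cal J}(\cdot,U^{i})-{\cal J}(\cdot,U^{i-1})\|^{2}$ is bounded by $K_{D,c}^{2}$ times $E\int\|\Delta U^{i-1}\|^{2}$, where $\Delta U^{i-1}$ is itself a $V$-component output difference of $\Xi$ and therefore \emph{does} carry the small factor $\hat{\gamma}_{c}$ by \eq{vitodI}, \eq{firstineu} --- at the price of reaching back to the iterate $i-2$ (see \eq{secondineu}, stated for $i\geq 3$). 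The result is the two-step recursion \eq{lastine}, \eq{clastine}, \eq{infinityine}, namely $a_{i}\leq e^{k}\delta(a_{i-1}+a_{i-2})$, which is still summable (\eq{seriescon}), yields a Cauchy sequence, and the limit is then identified as the unique solution following the procedure of $\emptyset$ksendal. If you want to keep a genuine one-step Banach fixed point argument, you must first repair the norm, e.g.\ weight the $\bar V$-component of \eq{kcomnorm} by a small constant $\epsilon$ with $\epsilon K_{D,c}^{2}T<1$ before choosing $\gamma_{c}$ large; with the equal-weight norm you use, the contraction step fails, and everything you derive after it (self-mapping, fixed point, uniqueness) is left unsupported.
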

\begin{proof}
First, by using \eq{sigmanV}, we can define the following map,
\begin{eqnarray}
&&\Xi:(U(\cdot,x),\bar{U}(\cdot,x))
\rightarrow(V(\cdot,x),\bar{V}(\cdot,x)).\nonumber
\end{eqnarray}
Then, based on the norm defined in \eq{comnorm}, we can show that
$\Xi$ forms a contraction mapping in ${\cal M}^{D}_{\gamma}[0,T]$.
In fact, for $i\in\{1,2,...\}$, consider the following sequence of
processes,
\begin{eqnarray}
&&(U^{i}(\cdot,x),\bar{U}^{i}(\cdot,x))\in{\cal
M}^{D}_{\gamma}[0,T],
\nonumber\\
&&(U^{i+1}(\cdot,x),\bar{U}^{i+1}(\cdot,x))=\Xi(U^{i}(\cdot,x),
\bar{U}^{i}(\cdot,x)). \nonumber
\end{eqnarray}
Furthermore, define
\begin{eqnarray}
&&\Delta f^{i}=f^{i+1}-f^{i}\;\;\;\mbox{with}\;\;\;
f\in\left\{U,\bar{U}\right\},\nonumber
\end{eqnarray}
and take
\begin{eqnarray}
&&\zeta(\Delta U^{i}(t,x))=\mbox{Tr}\left(\Delta
U^{i}(t,x)\right)e^{2\gamma_{0}t}. \elabel{firstzeta}
\end{eqnarray}
Then, by using \eq{blipschitz} and the similar argument as used in
proving \eq{pvitod}, we know that, for a $\gamma_{0}>0$ and each
$i\in\{2,3,...\}$,
\begin{eqnarray}
&&\zeta(\Delta U^{i}(t,x))+\int_{t}^{T}\mbox{Tr}\left(\Delta{\cal
J}(s,x,U^{i},U^{i-1}) -\Delta\bar{U}^{i}(s,x)
\right)e^{2\gamma_{0}s}ds
\elabel{eupvitod}\\
&&\leq\hat{\gamma}_{0}\int_{t}^{T}\left\|\Delta{\cal
L}(s,x,U^{i},U^{i-1})\right\|^{2}e^{2\gamma_{0}s}ds-M^{i}(t)
\nonumber\\
&&\leq\hat{\gamma}_{0}K_{a,0}N^{i-1}(t)-M^{i}(t),
\nonumber
\end{eqnarray}
where $K_{a,0}$ is some nonnegative constant depending only on
$K_{D,0}$. For the last inequality in \eq{eupvitod}, we have taken
\begin{eqnarray}
&&\hat{\gamma}_{0}=\frac{3K_{D,0}^{2}}{2\gamma_{0}}>0.
\elabel{euthgamma}
\end{eqnarray}
Furthermore, $N^{i-1}(t)$ appeared in \eq{eupvitod} is given by
\begin{eqnarray}
&&\;N^{i-1}(t)=\int_{t}^{T}\left(\left\|\Delta
U^{i-1}(s)\right\|^{2}_{C^{k}(D,q)}+\left\|\Delta\bar{U}^{i-1}(s)
\right\|^{2}_{C^{k}(D,qd)}\right)e^{2\gamma_{0}s}ds
\elabel{euntt}
\end{eqnarray}
and $M^{i}(t)$ is a martingale of the following form,
\begin{eqnarray}
&&M^{i}(t)=-2\sum_{j=1}^{d}\int_{t}^{T}\left((\Delta
U^{i})(s,x)\right)'\left(\Delta{\cal J}_{j}(s,x,U^{i},U^{i-1})
-(\Delta\bar{U}^{i})_{j}(s,x)\right) e^{2\gamma_{0}s}dW_{j}(s).
\elabel{eumsigman}
\end{eqnarray}
Then, by applying \eq{eupvitod}-\eq{eumsigman} and the martingale
properties related to stochastic integral, we have
\begin{eqnarray}
&&E\left[\left\|\Delta U^{i}(t,x) \right\|^{2}e^{2\gamma_{0}
t}+\int_{t}^{T}\mbox{Tr}\left(\Delta{\cal J}(s,x,U^{i},U^{i-1})
-\Delta\bar{U}^{i}(s,x) \right)e^{2\gamma_{0} s}ds\right.
\elabel{vitodI}\\
&&\leq\hat{\gamma}_{0}(T+1)K_{a,0}\left\|(\Delta
U^{i-1},\Delta\bar{U}^{i-1}) \right\|^{2}_{{\cal
M}^{D}_{\gamma_{0},k}}. \nonumber
\end{eqnarray}
Thus, by using \eq{eumsigman} and the Burkholder-Davis-Gundy's
inequality (see, e.g., Theorem 48 in page 193 of
Protter~\cite{pro:stoint}), we have,
\begin{eqnarray}
&&\;\;\;E\left[\sup_{0\leq t\leq T}\left|M^{i}(t)\right|\right]
\elabel{vitodII}\\
&\leq&4\sum_{j=1}^{d}E\left[\sup_{0\leq t\leq
T}\left|\int_{0}^{t}\left(\Delta U^{i}(s,x)\right)'\left(\Delta{\cal
J}_{j}(s,x,U^{i},U^{i-1})-(\Delta\bar{U}^{i})_{j}
(s,x)\right)e^{2\gamma_{0} s}dW_{j}(s)\right|\right]
\nonumber\\
&\leq&K_{b,0}\sum_{j=1}^{d}E\left[\left(\int_{0}^{T}\left\|\Delta
U^{i}(s,x)\right\|^{2}\left\|(\Delta{\cal
J}^{i})_{j}(s,x,U^{i},U^{i-1})-(\Delta\bar{U}^{i})_{j}
(s,x)\right\|^{2}e^{4\gamma_{0} s}ds\right)^{\frac{1}{2}}
\right]\nonumber\\
&\leq&K_{b,0}E\left[\left(\sup_{0\leq t\leq T}\|\Delta
U^{i}(t,x)\|^{2}e^{2\gamma_{0} t}\right)^{\frac{1}{2}}\right.
\nonumber\\
&&\;\;\;\;\;\;\;\;\;\;\left(\sum_{j=1}^{d}\left(\int_{0}^{T}
\left\|\Delta{\cal
J}_{j}(s,x,U^{i},U^{i-1})-(\Delta\bar{U}^{i})_{j}(s,x)
\right\|^{2}e^{2\gamma_{0} s}ds\right)^{\frac{1}{2}}\right]
\nonumber\\
&\leq&\frac{1}{2}E\left[\sup_{0\leq t\leq T}\|\Delta
U^{i}(t,x)\|^{2}e^{2\gamma_{0} t}\right]\nonumber\\
&&+dK_{b,0}^{2}E\left[\left(\int_{0}^{T}\mbox{Tr}\left(\Delta{\cal
J}(s,x,U^{i},U^{i-1})-(\Delta\bar{U}^{i})(s,x)
\right)e^{2\gamma_{0} s}ds\right)\right]\nonumber\\
&\leq&\frac{1}{2}E\left[\sup_{0\leq t\leq T}\|\Delta
U^{i}(t,x)\|^{2}_{C^{0}(q)}e^{2\gamma_{0}
t}\right]+\hat{\gamma}_{0}(T+1)dK_{a,0}K_{b,0}^{2}\left\|(\Delta
U^{i-1},\Delta\bar{U}^{i-1}) \right\|^{2}_{{\cal
M}^{D}_{\gamma_{0},k}}, \nonumber
\end{eqnarray}
where $K_{b,0}$ is some nonnegative constant depending only on
$K_{D,0}$ and $T$. The last inequality of \eq{vitodII} is owing to
\eq{vitodI}. Therefore, by using \eq{eupvitod}-\eq{vitodII}, we know
that
\begin{eqnarray}
&&E\left[\sup_{0\leq t\leq T}\left\|\Delta
U^{i}(t)\right\|^{2}_{C^{0}(q)}e^{2\gamma_{0}
t}\right]\elabel{firstineu}\\
&&\leq
2\left(1+dK_{b,0}^{2}\right)K_{a,0}\hat{\gamma}_{0}(T+1)\left\|(\Delta
U^{i-1},\Delta\bar{U}^{i-1}) \right\|^{2}_{{\cal
M}^{D}_{\gamma_{0},k}}. \nonumber
\end{eqnarray}
Furthermore, by using \eq{eupvitod} and \eq{blipschitzo}, we know
that, for each $i\in\{3,4,...\}$,
\begin{eqnarray}
&&E\left[\int_{t}^{T}
\mbox{Tr}\left(\Delta\bar{U}^{i}(s,x)\right)e^{2\gamma_{0}
s}ds\right]
\elabel{secondineu}\\
&\leq& 2E\left[\int_{t}^{T}\mbox{Tr}\left(\Delta{\cal
J}(s,x,U^{i},U^{i-1}) -\Delta\bar{U}^{i}(s,x)
\right)e^{2\gamma_{0} s}ds\right]\nonumber\\
&&+2E\left[\int_{t}^{T}\mbox{Tr}\left(\Delta{\cal
J}(s,x,U^{i},U^{i-1})\right)e^{2\gamma_{0} s}ds\right]
\nonumber\\
&\leq&2\hat{\gamma}_{0}K_{C,0}\left(\left\|(\Delta
U^{i-1},\Delta\bar{U}^{i-1}) \right\|^{2}_{{\cal
M}^{D}_{\gamma_{0},k}}+\left\|(\Delta U^{i-2},\Delta\bar{U}^{i-2})
\right\|^{2}_{{\cal M}^{D}_{\gamma_{0},k}}\right), \nonumber
\end{eqnarray}
where $K_{C,0}$ is some nonnegative constant depending only on
$K_{D,0}$ and $T$. Hence, by \eq{eupvitod},
\eq{firstineu}-\eq{secondineu}, and the fact that all functions and
norms used in this paper are continuous in terms of $x$, we have
that
\begin{eqnarray}
&&\left\|(\Delta U^{i},\Delta\bar{U}^{i}) \right\|^{2}_{{\cal
M}^{D}_{\gamma_{0},0}}
\elabel{lastine}\\
&&\leq\hat{\gamma}_{0}K_{d,0}\left(\left\|(\Delta
U^{i-1},\Delta\bar{U}^{i-1}) \right\|^{2}_{{\cal
M}^{D}_{\gamma_{0},k}}+\left\|(\Delta U^{i-2},\Delta\bar{U}^{i-2})
\right\|^{2}_{{\cal M}^{D}_{\gamma_{0},k}}\right), \nonumber
\end{eqnarray}
where $K_{d,0}$ is some nonnegative constant depending only on
$K_{D,0}$ and $T$.

Next, by using Lemma~\ref{differentiableV} and the similar
construction as used in \eq{firstzeta}, we can define
\begin{eqnarray}
&&\zeta(\Delta U^{c,i}(t,x))\equiv\mbox{Tr}\left(\Delta
U^{c,i}(t,x)\right)e^{2\gamma_{c} t} \elabel{secondzeta}
\end{eqnarray}
for each $c\in\{1,2,...\}$, where
\begin{eqnarray}
&&\Delta U^{c,i}(t,x))=(\Delta U^{(0),i}(t,x)),\Delta
U^{(1),i}(t,x)),...,\Delta U^{(c),i}(t,x))'. \nonumber
\end{eqnarray}
Thus, by using the It$\hat{o}$'s formula and the similar discussion
for \eq{lastine}, we have that
\begin{eqnarray}
&&\left\|(\Delta U^{i},\Delta\bar{U}^{i}) \right\|^{2}_{{\cal
M}^{D}_{\gamma_{c},c}}
\elabel{clastine}\\
&\leq&\hat{\gamma}_{c}K_{d,c}\left(\left\|(\Delta
U^{i-1},\Delta\bar{U}^{i-1}) \right\|^{2}_{{\cal
M}^{D}_{\gamma_{c},k+c}}+\left\|(\Delta U^{i-2},\Delta\bar{U}^{i-2})
\right\|^{2}_{{\cal M}^{D}_{\gamma_{c},k+c}}\right)
\nonumber\\
&\leq&\frac{\delta}{((c+1)^{10}(c+2)^{10}...(c+k)^{10})(\eta(c+1)
\eta(c+2)...\eta(c+k))}
\nonumber\\
&&\left(\left\|(\Delta U^{i-1},\Delta\bar{U}^{i-1})
\right\|^{2}_{{\cal M}^{D}_{\gamma_{k+c},k+c}}+\left\|(\Delta
U^{i-2},\Delta\bar{U}^{i-2}) \right\|^{2}_{{\cal
M}^{D}_{\gamma_{k+c},k+c}}\right),\nonumber
\end{eqnarray}
where, for the last inequality of \eq{clastine}, we have taken the
number sequence $\gamma$ such that $\gamma_{0}<\gamma_{1}<...$ and
\begin{eqnarray}
\hat{\gamma}_{c}K_{d,c}((c+1)^{10}(c+2)^{10}...(c+k)^{10})(\eta(c+1)
\eta(c+2)...\eta(c+k))\leq\delta\nonumber
\end{eqnarray}
for some $\delta>0$ such that $2\sqrt{e^{k}\delta}$ is sufficiently
small. Hence, we know that
\begin{eqnarray}
&&\left\|(\Delta U^{i},\Delta\bar{U}^{i}) \right\|^{2}_{{\cal
M}^{D}_{\gamma}}\leq e^{k}\delta\left(\left\|(\Delta
U^{i-1},\Delta\bar{U}^{i-1}) \right\|^{2}_{{\cal
M}^{D}_{\gamma}}+\left\|(\Delta U^{i-2},\Delta\bar{U}^{i-2})
\right\|^{2}_{{\cal M}^{D}_{\gamma}}\right).
\elabel{infinityine}
\end{eqnarray}
Owing to $(a^{2}+b^{2})^{1/2}\leq a+b$ for $a,b\geq 0$, we can
conclude that
\begin{eqnarray}
&&\left\|(\Delta U^{i},\Delta\bar{U}^{i}) \right\|_{{\cal
M}^{D}_{\gamma}}\leq\sqrt{e^{k}\delta}\left(\left\|(\Delta
U^{i-1},\Delta\bar{U}^{i-1})\right\|_{{\cal
M}^{D}_{\gamma}}+\left\|(\Delta
U^{i-2},\Delta\bar{U}^{i-2})\right\|_{{\cal M}^{D}_{\gamma}}\right).
\elabel{noninfinityine}
\end{eqnarray}
Thus,
it follows from \eq{noninfinityine} that
\begin{eqnarray}
\;\;\;\;\;\;\;\sum_{i=3}^{\infty}\left\|(\Delta
U^{i},\Delta\bar{U}^{i})\right\|_{{\cal M}^{D}_{\gamma}}
&\leq&\frac{\sqrt{e^{k}\delta}}{1-2\sqrt{e^{k}\delta}}\left(2\left\|(\Delta
U^{2},\Delta\bar{U}^{2})\right\|_{{\cal
M}^{D}_{\gamma}}+\left\|(\Delta
U^{1},\Delta\bar{U}^{1})\right\|_{{\cal
M}^{D}_{\gamma}}\right) \elabel{seriescon}\\
&<&\infty.\nonumber
\end{eqnarray}
Therefore, by using \eq{seriescon}, we see that
$(U^{i},\bar{U}^{i})$ with $i\in\{1,2,...\}$ forms a Cauchy sequence
in ${\cal M}^{D}_{\gamma}[0,T]$. Hence, there is some $(U,\bar{U})$
such that
\begin{eqnarray}
&&(U^{i},\bar{U}^{i})\rightarrow
(U,\bar{U})\;\;\mbox{as}\;\;i\rightarrow\infty\;\;
\mbox{in}\;\;{\cal M}^{D}_{\gamma}[0,T]. \elabel{finalcon}
\end{eqnarray}
Finally, by using \eq{finalcon} and the similar procedure as used
for Theorem 5.2.1 in pages 68-71 of
$\emptyset$ksendal~\cite{oks:stodif}, the proof of
Lemma~\ref{lemmathree} is completed. $\Box$
\end{proof}

\subsection{Proof of Theorem~\ref{bsdeyI}}

By combining Lemma~\ref{martindecom}-Lemma~\ref{lemmathree}, we can
reach a proof for Theorem~\ref{bsdeyI}. $\Box$

\section{Proof of Theorem~\ref{errorIth}}\label{errorIthp}

To prove the theorem, we first develop new fundamental theory for
random field based Malliavin Calculus in
subsections~\ref{basicp}-\ref{prioriI}. Then, based on this newly
developed theory, we provide a proof for Theorem~\ref{errorIth} in
subsections~\ref{repres}-\ref{proofcon}.

\subsection{Basic Properties of Random Field Based Malliavin
Calculus}\label{basicp}

\begin{lemma}\label{closablelemma}
The unbounded operator defined in \eq{mmderi} is closable from
$L^{2}(\Omega,C^{\infty}(D,R^{q}))$ to
$L^{2}_{\alpha,2}(\Omega,(C^{\infty}(D,H))^{q})$ with
$\alpha\in\{1,2\}$.
\end{lemma}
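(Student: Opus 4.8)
The plan is to follow the classical closability argument for the Malliavin derivative, adapted to the random-field weighted norms \eq{finfty}--\eq{mnormI}. The engine of the proof is an integration-by-parts (duality) formula on the class ${\cal S}$. First I would fix indices $x\in D$, $c\in\{0,1,...\}$, $r\in\{1,...,q\}$ and $(i_{1},...,i_{p})\in{\cal I}^{c}$, and regard $F^{(c)}_{r,i_{1}...i_{p}}(x)$ as an ordinary smooth Wiener functional of the Gaussian vector $(W(h^{1}),...,W(h^{g}))$ whose coefficients depend smoothly on $x$ through $\phi\in C_{b}^{\infty}(R^{g+p},R^{q})$. Combining the explicit form \eq{mmderi} with the Gaussian integration-by-parts identity $E[\langle{\cal D}F,h\rangle_{H}]=E[F\,W(h)]$ (valid for smooth $F$ and deterministic $h\in H$) and the Leibniz rule ${\cal D}(FG)=F{\cal D}G+G{\cal D}F$, I would derive, for every $G\in{\cal S}$ and every $h\in H$,
\[
E\left[G\,\langle{\cal D}F^{(c)}_{r,i_{1}...i_{p}}(x),h\rangle_{H}\right]
=E\left[F^{(c)}_{r,i_{1}...i_{p}}(x)\left(G\,W(h)-\langle{\cal D}G,h\rangle_{H}\right)\right].
\]

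Second, with this duality in hand I would prove closability at a single index. Take a sequence $\{F_{n}\}\subset{\cal S}$ with $F_{n}\to 0$ in $L^{2}(\Omega,C^{\infty}(D,R^{q}))$ while ${\cal D}F_{n}\to\eta$ in $L^{2}_{1,2}(\Omega,(C^{\infty}(D,H))^{q})$; the goal is $\eta=0$. Convergence in the weighted $C^{\infty}$-norm \eq{inftynorm} dominates each fixed $C^{v}$ sup-norm, so for each fixed $x$ the scalar $F^{(c)}_{r,i_{1}...i_{p}}(x)$ of $F_{n}$ tends to $0$ in $L^{2}(\Omega,R)$, and likewise ${\cal D}F^{(c)}_{r,i_{1}...i_{p}}(x)\to\eta^{(c)}_{r,i_{1}...i_{p}}(x)$ in $L^{2}(\Omega;H)$ by \eq{mnorm}. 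Passing to the limit in the duality formula, the right-hand side vanishes because $F_{n}\to 0$ in $L^{2}$ while $G\,W(h)-\langle{\cal D}G,h\rangle_{H}\in L^{2}(\Omega)$ for $G\in{\cal S}$, which forces
\[
E\left[G\,\langle\eta^{(c)}_{r,i_{1}...i_{p}}(x),h\rangle_{H}\right]=0
\]
for all $G\in{\cal S}$ and all $h\in H$; by the density of ${\cal S}$ in $L^{2}(\Omega)$ and the arbitrariness of $h$ this yields $\eta^{(c)}_{r,i_{1}...i_{p}}(x)=0$ a.s.

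Third, I would lift this to the full norm. Since $\eta^{(c)}_{r,i_{1}...i_{p}}(x)=0$ holds for every $x\in D$, every $c$, $r$ and $(i_{1},...,i_{p})\in{\cal I}^{c}$, and each component process is $C^{\infty}(D)$-valued (hence continuous in $x$), the maxima in $\Lambda_{v}$ of \eq{notationI} and the convergent weighted sum in \eq{finfty} all vanish, giving $\|\eta\|_{1,2}^{\infty,2}=0$, i.e.\ $\eta=0$. This settles $\alpha=1$. For $\alpha=2$ I would repeat the scheme with the second-order operator \eq{mmderiI}, using an iterated Gaussian integration-by-parts formula to obtain the corresponding duality relation and then the same limiting/summation argument against the norm \eq{mnormI}.

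The hard part will be the commutation of the spatial partial derivatives (encoded in the superscript $(c)$ and the multi-index $(i_{1},...,i_{p})$) with the Malliavin derivative, together with making the limiting procedure uniform in $x\in D$ in the sup-norms $C^{v}(D,\cdot)$ and compatible with the infinite weighted sum over $c$. Specifically, one must verify that differentiating $\phi(W(h^{1}),...,W(h^{g}),x)$ in $x$ commutes with applying ${\cal D}$ for $\phi\in C_{b}^{\infty}$, so that the duality formula survives at every spatial-derivative order, and that convergence in the strong weighted norm \eq{inftynorm} genuinely controls each individual index before the scalar closability argument is invoked.
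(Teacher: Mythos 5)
Your proposal is correct and follows essentially the same route as the paper: both reduce the problem to a fixed spatial point $x$ and a fixed derivative index $(r,c,i_{1},\ldots,i_{p})$ (using that the weighted norm \eq{inftynorm} dominates each componentwise $L^{2}$-convergence), establish $\eta^{(c)}_{r,i_{1}\ldots i_{p}}(x)=0$ by the classical scalar closability argument, and then reassemble via the weighted sum \eq{finfty}, treating $\alpha=2$ by iterating the same scheme. The only difference is presentational: where you write out the Gaussian integration-by-parts duality explicitly, the paper simply cites the proof of Proposition 1.2.1 (and Exercise 1.2.3 for $\alpha=2$) in Nualart, which contains exactly that argument.
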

\begin{proof}
First, we consider the case that $\alpha=1$. Let
$\{F^{i}:i\in\{1,2,...\}\}$ be a sequence of smooth random
variables, which converges to zero along $i\in\{1,2,...\}$ in
$L^{2}(\Omega,C^{\infty}(D,R^{q}))$. Thus, we can conclude that
$F^{(c),i}_{r,i_{1}...i_{p}}(x)\rightarrow 0$ along
$i\in\{1,2,...\}$ in the usual mean-square sense for each $x\in D$,
$r\in\{1,...,q\}$, $c\in\{0,1,...\}$, and $(i_{1},...,i_{p})\in{\cal
I}^{c}$. In the meanwhile, we suppose that the corresponding
sequence related to Malliavin derivatives converges to some $\eta$
in $L^{2}_{1,2}(\Omega,(C^{\infty}(D,H))^{q})$, which implies that
$DF^{(c),i}_{r,i_{1}...i_{p}}(x)\rightarrow\eta^{(c)}_{r,i_{1}...i_{p}}(x)$
along $i\in\{1,2,...\}$ in the usual mean-square sense for each
$x\in D$, $r\in\{1,...,q\}$, $c\in\{0,1,...\}$, and
$(i_{1},...,i_{p})\in{\cal I}^{c}$. Then, it follows from the proof
of Proposition 1.2.1 in page 26 of Nualart~\cite{nua:malcal} that
$\eta^{(c)}_{r,i_{1}...i_{p}}(x)=0$ for each $x\in D$,
$r\in\{1,...,q\}$, $c\in\{0,1,...\}$, and $(i_{1},...,i_{p})\in{\cal
I}^{c}$. Thus, we know that $\eta=0$. Hence, by the definition of
the closable operator (see, e.g., page 77 of
Yosida~\cite{yos:funana}), we conclude that the claim in the lemma
is true if $\alpha=1$.

Second, we consider the case that $\alpha=2$. By combining the above
discussion and the proof used for Exercise 1.2.3 in page 34 of
Nualart~\cite{nua:malcal}, we know that the claim for $\alpha=2$ is
also true. $\Box$
\end{proof}
\begin{lemma}\label{chocone}
Consider each $F\in{\cal D}_{\infty}^{1,2}$. Then, for each
$c\in\{0,1,...\}$, $(i_{1},...,i_{p})\in{\cal I}^{c}$, and $x\in D$,
we have
\begin{eqnarray}
&&E\left[\left.D_{t}F_{i_{1}...i_{p}}^{(c)}(x)\right|{\cal
F}_{t}\right]\in\left(L^{2}([0,T]\times\Omega,C^{\infty}(D,R^{q}))\right)^{d},
\elabel{extendedsqrt}
\end{eqnarray}
and furthermore,
\begin{eqnarray}
&&F_{i_{1}...i_{p}}^{(c)}(x)=E\left[F_{i_{1}...i_{p}}^{(c)}(x)\right]
+\int_{0}^{T}E\left[\left.D_{t}F_{i_{1}...i_{p}}^{(c)}(x)\right|{\cal
F}_{t}\right]dW(t).\elabel{extendedCHO}
\end{eqnarray}
\end{lemma}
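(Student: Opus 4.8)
The plan is to establish this random-field Clark--Ocone representation by first verifying it on the dense class of smooth functionals ${\cal S}$ and then extending it by the closability and density already secured in Lemma~\ref{closablelemma}. Fix $c\in\{0,1,\dots\}$, $(i_{1},\dots,i_{p})\in{\cal I}^{c}$, a component index $r\in\{1,\dots,q\}$, and a point $x\in D$. For $F=\phi(W(h^{1}),\dots,W(h^{g}),x)\in{\cal S}$ as in \eq{compoundf}, the scalar random variable $F^{(c)}_{r,i_{1}\dots i_{p}}(x)$ is an ordinary smooth Wiener functional, and by \eq{mmderi} its Malliavin derivative ${\cal D}_{\theta}F^{(c)}_{r,i_{1}\dots i_{p}}(x)$ coincides with the classical one. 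Hence the classical Clark--Ocone--Haussmann formula applies pointwise in $x$ and componentwise in $r$, so that \eq{extendedCHO} holds for every $F\in{\cal S}$, with the stochastic integral understood as the contraction of the $R^{q\times d}$-valued integrand against $dW\in R^{d}$.

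Next I would pass to a general $F\in{\cal D}^{1,2}_{\infty}$. By Lemma~\ref{closablelemma} the domain ${\cal D}^{1,2}_{\infty}$ is the closure of ${\cal S}$ under the norm \eq{finfty}, so I choose $F^{i}\in{\cal S}$ with $\|F^{i}-F\|^{\infty,2}_{1,2}\to 0$. For each fixed $(c,r,i_{1}\dots i_{p},x)$, the defining norm \eq{mnorm} controls both $E[|F^{(c),i}_{r,i_{1}\dots i_{p}}(x)-F^{(c)}_{r,i_{1}\dots i_{p}}(x)|^{2}]$ and $E[\int_{0}^{T}\|{\cal D}_{\theta}(F^{i}-F)^{(c)}_{r,i_{1}\dots i_{p}}(x)\|^{2}d\theta]$, so $F^{(c),i}_{r,i_{1}\dots i_{p}}(x)\to F^{(c)}_{r,i_{1}\dots i_{p}}(x)$ in $L^{2}(\Omega)$ and ${\cal D}_{\theta}F^{(c),i}_{r,i_{1}\dots i_{p}}(x)\to{\cal D}_{\theta}F^{(c)}_{r,i_{1}\dots i_{p}}(x)$ in $L^{2}([0,T]\times\Omega)$. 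Since conditional expectation is an $L^{2}$-contraction, $E[{\cal D}_{t}F^{(c),i}_{r,i_{1}\dots i_{p}}(x)\mid{\cal F}_{t}]\to E[{\cal D}_{t}F^{(c)}_{r,i_{1}\dots i_{p}}(x)\mid{\cal F}_{t}]$ in $L^{2}([0,T]\times\Omega)$, whence the It\^o isometry forces the stochastic integrals to converge in $L^{2}(\Omega)$, while the mean terms converge trivially. Passing to the limit in the formula from the first step yields \eq{extendedCHO} for $F$.

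It remains to verify the membership \eq{extendedsqrt}, namely that $E[{\cal D}_{t}F^{(c)}_{i_{1}\dots i_{p}}(x)\mid{\cal F}_{t}]$, regarded as a process in $x$, lies in $(L^{2}([0,T]\times\Omega,C^{\infty}(D,R^{q})))^{d}$. The starting point is that ${\cal D}_{\theta}F\in L^{2}_{1,2}(\Omega,(C^{\infty}(D,H))^{q})$, so by \eq{finfty}--\eq{mnorm} the weighted sum $\sum_{v}\xi(v)E[\int_{0}^{T}\Lambda_{v}\|{\cal D}_{\theta}F^{(c)}_{r,i_{1}\dots i_{p}}\|^{2}_{C^{v}(D,d)}d\theta]$ is finite. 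To push the $C^{\infty}(D,R^{q})$ norm through the conditional expectation I would invoke Jensen's inequality in the form $\|E[\,\cdot\mid{\cal F}_{t}]\|^{2}\le E[\|\cdot\|^{2}\mid{\cal F}_{t}]$, applied to the sup-type norm over $x\in D$ together with the decaying $\xi(v)$ summation, and then take expectations, so that the resulting bound is dominated by the finite quantity above.

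The step I expect to be the main obstacle is precisely this last membership claim \eq{extendedsqrt}: the norm $\|\cdot\|_{C^{\infty}(D,q)}$ is built from suprema over $x\in D$ and over derivatives, and conditional expectation does not commute with a supremum, so one must argue through $E[\sup_{x}|\cdot|^{2}]$-type bounds and verify that the Jensen estimate survives the simultaneous summation over all derivative orders $c$ with the fast-decaying weights $\xi(c)$. Once the dominated-convergence and contraction arguments are organized around the weighted norm \eq{finfty}, the passage to the limit and the representation itself follow the classical scalar template; the genuinely new content is the uniform control over $x\in D$ and over the infinite family of partial-derivative orders.
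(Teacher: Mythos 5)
Your proof is correct, and for the membership claim \eq{extendedsqrt} it is the same argument as the paper's: both rest on the conditional-expectation contraction (Jensen's inequality for the Banach-space norm), giving
$E[\int_{0}^{T}\|E[D_{t}F^{(c)}_{r,i_{1}...i_{p}}\,|\,{\cal F}_{t}]\|^{2}_{C^{\infty}(D,d)}dt]
\le E[\int_{0}^{T}\|D_{t}F^{(c)}_{r,i_{1}...i_{p}}\|^{2}_{C^{\infty}(D,d)}dt]<\infty$,
which is finite precisely because $F\in{\cal D}^{1,2}_{\infty}$; your additional care about the sup-over-$x$ not commuting with conditional expectation (handled via continuity in $x$ on the compact $D$, so the sup can be taken over a countable dense set) is exactly what makes the paper's one-line estimate rigorous. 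Where you genuinely diverge is on the representation \eq{extendedCHO}: the paper disposes of it by citing the Clark--Haussmann--Ocone formula of Aase et al.\ applied pointwise in $x$ and componentwise in $r$, whereas you re-derive it --- classical Clark--Ocone on the smooth class ${\cal S}$, then extension to all of ${\cal D}^{1,2}_{\infty}$ using the closability and density secured in Lemma~\ref{closablelemma}, the $L^{2}$-contraction of conditional expectation, and the It\^o isometry. Your density route is self-contained and, more importantly, it explicitly checks that the weighted random-field norm \eq{finfty} controls the fixed-$(c,r,i_{1}...i_{p},x)$ quantities $E[|F^{(c),i}_{r,i_{1}...i_{p}}(x)-F^{(c)}_{r,i_{1}...i_{p}}(x)|^{2}]$ and $E[\int_{0}^{T}\|{\cal D}_{\theta}(F^{i}-F)^{(c)}_{r,i_{1}...i_{p}}(x)\|^{2}d\theta]$ needed to pass to the limit --- the only point where the random-field structure could conceivably have caused trouble, and one the paper's citation leaves implicit. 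The paper's route buys brevity; yours buys a verification that the classical theorem really transfers to this setting. Both are valid proofs of the lemma.
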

\begin{proof}
For each $F\in {\cal D}_{\infty}^{1,2}$ and $r\in\{1,...,q\}$, we
have the following calculation,
\begin{eqnarray}
&&E\left[\int_{0}^{T}\left\|E\left[\left.D_{t}F_{r,i_{1}...i_{p}}^{(c)}(x)\right|{\cal
F}_{t}\right]\right\|^{2}_{C^{\infty}(D,d)}dt\right]
\elabel{abnormine}\\
&\leq&\int_{0}^{T}E\left[\left\|D_{t}F_{r,i_{1}...i_{p}}^{(c)}(x)\right\|^{2}_{C^{\infty}(D,d)}\right]dt
\nonumber\\
&=&E\left[\int_{0}^{T}\left\|D_{t}F_{r,i_{1}...i_{p}}^{(c)}(x)\right\|^{2}_{C^{\infty}(D,d)}dt\right].
\nonumber
\end{eqnarray}
Thus, it follows from \eq{abnormine} that the claim in
\eq{extendedsqrt} is true. Furthermore, owing to the
Clark-Haussmann-Ocone formula (see, e.g., Aase {\em et
al.}~\cite{aasoks:whinoi}), we know that \eq{extendedCHO} holds.
$\Box$
\end{proof}
\begin{lemma}\label{equivm}
Let $Z\in L^{2}_{{\cal F},p}([t,T],C^{\infty}(D,R^{d}))$ with a
fixed $t\in[0,T]$ replacing $t=0$ in the previous discussion be such
that $F\in {\cal D}^{1,2}_{\infty}$ with $F$ defined by
\begin{eqnarray}
F^{(c)}_{i_{1}...i_{p}}(t,x)=\int_{t}^{T}Z^{(c)}_{i_{1}...i_{p}}(s,x)dW(s)
\elabel{xicI}
\end{eqnarray}
for each $x\in D$, $c\in\{0,1,...\}$, and $(i_{1},...,i_{p})\in{\cal
I}^{c}$. Then, $Z\in {\cal D}_{\infty}^{1,2}\cap
L^{2}_{1,2}(\Omega,(C^{\infty}(D,H))^{d})$. Furthermore, for each
$j\in\{1,...,d\}$,
\begin{eqnarray}
&&{\cal
D}_{\theta}^{j}F^{(c)}_{i_{1}...i_{p}}(t,x)=\left\{\begin{array}{ll}
       \int_{t}^{T}{\cal D}^{j}_{\theta}Z^{(c)}_{i_{1}...i_{p}}(s,x)dW(s)
       &\mbox{if}\;\;\theta\leq t,\\
       Z^{(c),j}_{i_{1}...i_{p}}(\theta,x)
       +\int_{\theta}^{T}{\cal D}^{j}_{\theta}Z^{(c)}_{i_{1}...i_{p}}(s,x)dW(s)
       &\mbox{if}\;\;\theta>t,
       \end{array}
\right.\elabel{oyequal}
\end{eqnarray}
where $Z^{(c),j}_{i_{1}...i_{p}}$ is the $j$th component of
$Z^{(c)}_{i_{1}...i_{p}}$.
\end{lemma}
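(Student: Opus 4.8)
The plan is to prove the commutation formula \eq{oyequal} first for elementary integrands, to transfer it to general $Z$ through the It\^o isometry together with the closability of ${\cal D}$ established in Lemma~\ref{closablelemma}, and finally to reassemble everything in the weighted random-field norm \eq{finfty}. I would begin by fixing $r\in\{1,\dots,q\}$, an order $c\in\{0,1,\dots\}$, a multi-index $(i_1,\dots,i_p)\in{\cal I}^{c}$, and a point $x\in D$, so that $F^{(c)}_{i_1\dots i_p}(x)=\int_t^T Z^{(c)}_{i_1\dots i_p}(s,x)\,dW(s)$ from \eq{xicI} is a scalar It\^o integral of an adapted, square-integrable process. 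For a simple predictable integrand $Z^{(c)}_{i_1\dots i_p}(s,x)=\sum_{l}G_l\,{\bf 1}_{(s_l,s_{l+1}]}(s)$, with each $G_l$ an ${\cal F}_{s_l}$-measurable smooth random variable of the form \eq{compoundf}, the integral collapses to the finite sum $\sum_l G_l(W(s_{l+1})-W(s_l))$, and the derivative rule \eq{mmderi} together with the Leibniz product rule yields
\[
{\cal D}^{j}_{\theta}F^{(c)}_{i_1\dots i_p}(x)
=\sum_l\big({\cal D}^{j}_{\theta}G_l\big)(W(s_{l+1})-W(s_l))
+\sum_l G_l\,{\bf 1}_{(s_l,s_{l+1}]}(\theta).
\]
Since $G_l$ is ${\cal F}_{s_l}$-measurable, ${\cal D}^{j}_{\theta}G_l=0$ for $\theta>s_l$, so the first sum reassembles into the It\^o integral $\int_{\max\{t,\theta\}}^{T}{\cal D}^{j}_{\theta}Z^{(c)}_{i_1\dots i_p}(s,x)\,dW(s)$, while the second sum equals the diagonal term $Z^{(c),j}_{i_1\dots i_p}(\theta,x)$, active only for $\theta>t$. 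This reproduces the two cases of \eq{oyequal} for simple integrands.

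Next I would remove the simplicity assumption by approximating a general $Z$ by simple predictable processes $Z^n\to Z$ in $L^{2}_{{\cal F},p}([t,T],C^\infty(D,R^d))$. Taking $E\int_0^T|\cdot|^2\,d\theta$ of the simple-process identity gives the energy identity: the It\^o isometry applied to the integral term, together with the vanishing of the cross term --- which holds because, for fixed $\theta$, $Z^{(c)}_{i_1\dots i_p}(\theta,x)$ is ${\cal F}_\theta$-measurable while $\int_\theta^T{\cal D}_\theta Z^{(c)}_{i_1\dots i_p}(s,x)\,dW(s)$ has zero ${\cal F}_\theta$-conditional mean --- yields, schematically,
\[
E\!\int_0^T\big|{\cal D}_\theta F^{(c)}_{i_1\dots i_p}(x)\big|^2 d\theta
=E\!\int_t^T\big|Z^{(c)}_{i_1\dots i_p}(\theta,x)\big|^2 d\theta
+E\!\int_0^T\!\!\int_{\max\{t,\theta\}}^T\big|{\cal D}_\theta Z^{(c)}_{i_1\dots i_p}(s,x)\big|^2 ds\,d\theta .
\]
This identity shows that the Malliavin-derivative norm of $F$ and the combined norm of $\big(Z,{\cal D}Z\big)$ coincide, so the two differentiability requirements are equivalent; in particular the standing hypothesis $F\in{\cal D}^{1,2}_\infty$ forces the right-hand side to be finite, the candidate derivatives ${\cal D}Z^n$ form a Cauchy sequence, and the closability of ${\cal D}$ (Lemma~\ref{closablelemma}) identifies their limit as ${\cal D}Z$. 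This simultaneously gives $Z\in{\cal D}^{1,2}_\infty\cap L^2_{1,2}(\Omega,(C^\infty(D,H))^d)$ and the validity of \eq{oyequal} for general $Z$; as a consistency check, conditioning \eq{oyequal} on ${\cal F}_s$ at $\theta=s>t$ recovers the Clark--Ocone identification $Z^{(c)}_{i_1\dots i_p}(s,x)=E[{\cal D}_sF^{(c)}_{i_1\dots i_p}(x)\,|\,{\cal F}_s]$ supplied by Lemma~\ref{chocone}.

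Finally I would carry out the random-field bookkeeping. Since stochastic integration commutes with spatial differentiation --- already used in Lemma~\ref{differentiableV}, so that $F^{(c)}_{i_1\dots i_p}(x)=\int_t^T Z^{(c)}_{i_1\dots i_p}(s,x)\,dW(s)$ holds for \emph{every} order $c$ --- I would sum the componentwise identities against the fast-decaying weights $\xi(c)$ in \eq{finfty}, so that all the convergences above take place in the infinite-dimensional norm $\|\cdot\|^{\infty,2}_{1,2}$ rather than merely for each fixed $c$ and $x$. I expect the main obstacle to be exactly this passage to the weighted norm in the converse direction: one must verify that deducing the differentiability of the integrand $Z$ from that of the integral $F$ remains valid uniformly over all derivative orders, i.e.\ that the weighted series $\sum_c\xi(c)(\cdots)$ stays finite along the approximation $Z^n\to Z$. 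The rapid decay of $\xi(c)$ built into \eq{finfty}, combined with $Z\in L^{2}_{{\cal F},p}([t,T],C^\infty(D,R^d))$ and $F\in{\cal D}^{1,2}_\infty$, is what legitimizes interchanging the limit, the stochastic integral, the spatial derivatives, and the infinite weighted sum.
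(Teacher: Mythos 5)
Your first step (the explicit computation of ${\cal D}^{j}_{\theta}F$ for simple integrands with smooth coefficients) is fine, and your energy identity is the same It\^o-isometry computation that underlies the paper's proof. The genuine gap is in the passage to general $Z$. You approximate the \emph{integrand} $Z$ by simple processes $Z^{n}$ in $L^{2}_{{\cal F},p}$, which only gives $F^{n}=\int_{t}^{T}Z^{n}\,dW\rightarrow F$ in $L^{2}(\Omega)$, and you then assert that ``the candidate derivatives ${\cal D}Z^{n}$ form a Cauchy sequence.'' Nothing in the hypotheses supports this: the energy identity bounds $\|{\cal D}Z^{n}-{\cal D}Z^{m}\|$ by $\|{\cal D}F^{n}-{\cal D}F^{m}\|$, and $L^{2}$-convergence of $F^{n}$ to a differentiable limit $F$ gives no control whatsoever on ${\cal D}F^{n}$ --- the Malliavin derivative is a closed but unbounded operator, so closability (Lemma~\ref{closablelemma}) only works in the opposite direction (if ${\cal D}F^{n}$ happens to converge and $F^{n}\rightarrow F$, then the limit is ${\cal D}F$); it cannot manufacture convergence of derivatives. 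Indeed one can choose simple $Z^{n}\rightarrow Z$ in $L^{2}$ whose coefficients oscillate (say, with terms like $n^{-1}\sin(nW(s_{l}))$ added on each subinterval) so that ${\cal D}Z^{n}$ fails to converge at all. Your related claim that the hypothesis $F\in{\cal D}^{1,2}_{\infty}$ ``forces the right-hand side to be finite'' is circular for the same reason: that right-hand side contains ${\cal D}Z$, which is precisely the object whose existence is to be proved.

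The repair is to run the approximation on $F$ rather than on $Z$, and in the graph norm rather than in $L^{2}$: this is what the paper does. One shows that the set of random variables of the form \eq{xicI} with Malliavin-differentiable integrands is dense in ${\cal D}^{1,2}_{\infty}\cap L^{2}_{{\cal F}_{T}}(\Omega,C^{\infty}(D,R^{d}))$ with respect to the norm \eq{finfty}; this density follows because ${\cal D}^{1,2}_{\infty}$ is by construction the closure of the smooth class ${\cal S}$ under that norm, while the Clark--Haussmann--Ocone formula of Lemma~\ref{chocone} represents each zero-mean smooth variable as a stochastic integral whose integrand $E[{\cal D}_{s}F\,|\,{\cal F}_{s}]$ is itself differentiable (this is the argument of Lemma 2.3 of Pardoux and Peng invoked in the paper, with the forward direction of \eq{oyequal} supplied by Proposition 3.4 of Nualart and Pardoux). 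With $F^{n}\rightarrow F$ Cauchy in the graph norm, your energy identity then genuinely does transfer Cauchyness to $(Z^{n},{\cal D}Z^{n})$, closability identifies the limit as $(Z,{\cal D}Z)$, and \eq{oyequal} passes to the limit. So your isometry and closability ingredients are the right ones; what is missing is the density step that lets you approximate in the norm that controls the derivatives.
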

\begin{proof}
First of all, if $Z\in {\cal D}_{\infty}^{1,2}\cap
L^{2}_{1,2}(\Omega,(C^{\infty}(D,H))^{d})$ and $F$ is defined by
\eq{xicI} for each fixed $t\in[0,T]$, it follows from Proposition
3.4 in Nualart and Pardoux~\cite{nuapar:stocal} that $F\in{\cal
D}^{1,2}_{\infty}$ and the claim in \eq{oyequal} holds for each
$x\in D$, $c\in\{0,1,...\}$, $(i_{1},...,i_{p})\in{\cal I}^{c}$, and
$j\in\{1,...,d\}$. Furthermore, owing to the Ito's isometry, we have
\begin{eqnarray}
&&\||F\||_{1,2}^{\infty,2}=\sum_{v=0}^{\infty}\xi(v)\left\|F\right\|^{v,2}_{1,2}
\elabel{xifinfty}
\end{eqnarray}
where
\begin{eqnarray}
&\left\||F\right\||^{v,2}_{1,2}
=E\left[\int_{t}^{T}\Lambda_{v}\left\|Z^{(c)}_{r,i_{1}...i_{p}}(s,x)
\right\|^{2}_{C^{v}(D,d)}ds+\int_{t}^{T}\int_{t}^{T}\Lambda_{v}\left\|{\cal
D}_{\theta}Z^{(c)}_{r,i_{1}...i_{p}}(s,x)\right\|^{2}_{C^{v}(D,dd)}
d\theta ds\right],\nonumber 
\end{eqnarray}
and $\Lambda_{v}$ is defined in \eq{notationI}. Therefore, by the
similar argument as used in the proof of Lemma 2.3 of Pardoux and
Peng~\cite{parpen:bacsto}, it suffices to show that the following
set for each fixed $t\in[0,T]$
\begin{eqnarray}
&&\left\{F\;\;\mbox{satisfying}\;\;\eq{xicI}\;\;\mbox{with}\;\;Z\in
{\cal D}_{\infty}^{1,2}\cap
L^{2}_{1,2}(\Omega,(C^{\infty}(D,H))^{d})\right\}\elabel{setI}
\end{eqnarray}
is dense in ${\cal D}_{\infty}^{1,2}\cap L^{2}_{{\cal
F}_{T}}(\Omega,C^{\infty}(D,R^{d}))$. In fact, it is a direct
conclusion of Lemma~\ref{closablelemma} and the fact that the set
defined in \eq{setI} contains the following set for each fixed
$t\in[0,T]$ owing to Lemma~\ref{chocone},
\begin{eqnarray}
&&\{F\in{\cal S}\cap L^{2}_{{\cal
F}_{T}}(\Omega,C^{\infty}(D,R^{d}))
\;\;\mbox{with}\;\;E[F^{(c)}_{i_{1}...i_{p}}(t,x)]=0
\nonumber\\
&&\;\;\mbox{for each}\;\;x\in
D\;\;c\in\{1,2,...\},\;(i_{1},...,i_{p})\in{\cal I}^{c}\}.
\nonumber
\end{eqnarray}
Hence, we complete the proof of the lemma. $\Box$
\end{proof}

Now, for each $t\in[t_{j_{0}-1},t_{j_{0}})$, $x\in D$, and
$v\in\{0,1,...,M\}$, we consider the following B-SPDE,
\begin{eqnarray}
V^{(v)}_{i_{1}...i_{p}}(t,x)&=&H^{(v)}_{i_{1}...i_{p}}(x)
+\int_{t}^{T}{\cal L}^{(v)}_{i_{1}...i_{p}}(s,x,V,\bar{V})ds
\elabel{pdsigmanVc}\\
&&+\int_{t}^{T}\left({\cal J}^{(v)}_{i_{1}...i_{p}}(s,x,V)
-\bar{V}^{(v)}_{i_{1}...i_{p}}(s,x)\right)dW(s), \nonumber
\end{eqnarray}
where $i_{1}+...+i_{p}=v$ and $i_{l}\in\{0,1,...,v\}$ with
$l\in\{1,...,p\}$. Then, we have the following lemma.
\begin{lemma}\label{differentiableVc}
Under conditions \eq{termI} and \eq{blipschitz}-\eq{blipschitzI},
there is a unique adapted and square-integrable solution
$(V^{(v)}_{i_{1}...i_{p}}(t,x),\bar{V}^{(v)}_{i_{1}...i_{p}}(t,x))$
to the B-SPDE in \eq{pdsigmanVc}.
\end{lemma}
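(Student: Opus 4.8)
The plan is to recognize that \eq{pdsigmanVc} is structurally nothing but the $v$th-order spatial derivative of the fully coupled B-SPDE \eq{bspdef}, and therefore to reduce its well-posedness to the machinery already assembled for Theorem~\ref{bsdeyI}. First I would check that the data inherit the required regularity: since $H\in L^{2}_{{\cal F}_{T}}(\Omega,C^{\infty}(D,R^{q}))$ by \eq{termI} and the $C^{\infty}(D)$-norm in \eq{initialx} dominates every finite-order $C^{v}(D)$-norm, the terminal value $H^{(v)}_{i_{1}...i_{p}}$ is ${\cal F}_{T}$-measurable and square integrable for each admissible $(v,i_{1},...,i_{p})$. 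Likewise, the coefficients ${\cal L}^{(v)}_{i_{1}...i_{p}}$ and ${\cal J}^{(v)}_{i_{1}...i_{p}}$ are exactly the higher-order operators already controlled by the generalized Lipschitz bounds \eq{blipschitz}-\eq{blipschitzo} and the linear growth bounds \eq{blipschitzoI}-\eq{blipschitzI}, which are imposed uniformly in the differential order $c$; so no new structural hypothesis is needed.

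For existence I would invoke Theorem~\ref{bsdeyI} directly: it produces a unique adapted pair $(V,\bar{V})\in{\cal Q}^{2}_{{\cal F}}([0,T]\times D)$ solving \eq{bspdef}, and by Lemma~\ref{differentiableV} each of its spatial derivatives $(V^{(v)}_{i_{1}...i_{p}},\bar{V}^{(v)}_{i_{1}...i_{p}})$ exists a.s., is adapted (resp.\ predictable), is square integrable in the senses of \eq{maxnormI}-\eq{maxnormII}, and satisfies the differentiated identity. Reading that identity with the \emph{same} $(V,\bar{V})$ on both sides gives precisely \eq{pdsigmanVc}, so the desired solution is simply the corresponding coordinate of the full solution, and existence requires no work beyond unpacking conclusions already in hand.

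For uniqueness I would argue directly on \eq{pdsigmanVc}. Given two adapted square-integrable solutions, I would apply It\^o's formula to the weighted trace $\mbox{Tr}(\Delta V^{(v)}_{i_{1}...i_{p}})e^{2\gamma s}$ of their difference, exactly as in the estimates \eq{pvitod} and \eq{eupvitod}, use the Lipschitz bound \eq{blipschitz} on $\Delta{\cal L}^{(v)}$ and $\Delta{\cal J}^{(v)}$, and choose $\gamma$ large enough (as in \eq{thgamma}) that the drift term is absorbed into the left-hand side; this forces the difference to vanish together with its martingale part.

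The hard part will be that \eq{pdsigmanVc} is \emph{not} a closed equation in the single pair $(V^{(v)}_{i_{1}...i_{p}},\bar{V}^{(v)}_{i_{1}...i_{p}})$: the operator ${\cal L}^{(v)}_{i_{1}...i_{p}}$ depends on spatial derivatives of $V$ up to order $k+v$ (and ${\cal J}^{(v)}_{i_{1}...i_{p}}$ up to order $n+v$), so any estimate for one coordinate feeds in higher-order coordinates and a naive coordinate-by-coordinate argument would be circular. To close it I would run the estimate not in isolation but inside the $\xi$-weighted norm \eq{comnorm}-\eq{kcomnorm}, reusing the telescoping bound \eq{clastine}: the fast-decaying weight $\xi(c)$ is precisely engineered so that the higher-order contributions $\|\cdot\|_{{\cal M}^{D}_{\gamma_{k+c},k+c}}$ are summably absorbed, which is what makes the contraction in ${\cal M}^{D}_{\gamma}[0,T]$ operate simultaneously for all orders. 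In short, the single-component statement of the lemma is legitimate only because the entire infinite system of derivative equations is simultaneously well-posed in the weighted norm.
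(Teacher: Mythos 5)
Your proposal is correct and takes essentially the same route as the paper: the paper's entire proof of this lemma is the single sentence ``The lemma is a direct conclusion of Theorem 2.1,'' i.e.\ exactly the reduction to Theorem~\ref{bsdeyI} (with Lemma~\ref{differentiableV} supplying the differentiated identity) that you spell out. Your additional uniqueness estimate and the discussion of why the coupled infinite system must be handled in the $\xi$-weighted norm ${\cal M}^{D}_{\gamma}[0,T]$ are a faithful unpacking of the machinery the paper implicitly relies on, not a different method.
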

\begin{proof}
The lemma is a direct conclusion of Theorem 2.1. $\Box$
\end{proof}
\begin{remark}\label{firstremark}
Note that, since the structures of the B-SPDEs displayed in
\eq{pdsigmanVc} are the same for all $v\in\{0,1,...,M\}$, we only
consider the case that $v=0$ in the rest of this subsection, i.e.,
the equation in \eq{bspdef}. Furthermore, for the time-inhomogeneous
B-SPDE in \eq{pdsigmanVc}, we can introduce an additional B-SPDE
through
\begin{eqnarray}
&&V^{0}(t,x)=T-\int_{t}^{T}ds.\elabel{additionaleq}
\end{eqnarray}
Obviously, $(V^{0}(t,x),\bar{V}^{0}(t,x))$ with $V^{0}(t,x)=t$ and
$\bar{V}^{0}(t,x)=(0,...,0)=\hat{0}$ (a $d$-dimensional zero row
vector) is the unique solution to the B-SPDE in \eq{additionaleq}.
Then, by combining \eq{additionaleq} and \eq{bspdef}, we can get a
$(q+1)$-dimensional B-SPDE,
\begin{eqnarray}
\;\;\;\;\;U(t,x)&=&\tilde{H}(x)+\int_{t}^{T}\tilde{{\cal L}}(x,U)ds
+\int_{t}^{T}\left(\tilde{{\cal J}}(x,U) -\bar{U}(s,x)\right)dW(s),
\elabel{ubspdef}
\end{eqnarray}
where
\begin{eqnarray}
\tilde{H}(x)&=&(T,H(x)')',\nonumber\\
U(t,x)&=&(V^{0}(t,x),V(t,x)')',\nonumber\\
\bar{U}(t,x)&=&(\bar{V}^{0}(t,x)',\bar{V}(t,x)')',\nonumber\\
\bar{{\cal L}}(x,U)&=&(-1,{\cal L}(x,U)')',\nonumber\\
\bar{J}(x,U)&=&(\hat{0}',{\cal J}(x,U)')'. \nonumber
\end{eqnarray}
Thus, without loss of generality and to be simple for notations, we
only consider the time-homogeneous case in \eq{bspdef} in the rest
of this section, i.e., the case corresponding to ${\cal
L}(s,x,V)={\cal L}(x,V)$ and ${\cal J}(s,x,V)={\cal J}(x,V)$.
\end{remark}

\subsection{B-SPDE with Malliavin Derivative Terminal
Condition}\label{malliavinterm}

First, consider a properly chosen number sequence
$\gamma=\{\gamma_{c},c=0,1,2,...\}$ satisfying
$0<\gamma_{0}<\gamma_{1}<....$ such that the discussions for Theorem
2.1 and Subsections~\ref{malliavinterm}-\ref{firstorderM} are
meaningful, which will be elaborated during the subsequent proof.
Second, we redefine the space in \eq{combanach} as follows,
\begin{eqnarray}
&&\;\;\;{\cal N}^{D}_{\gamma}[0,T]=D^{2}_{{\cal
F}}([0,T],C^{\infty}(D,R^{q\times d}))\times L^{2}_{{\cal
F},p}([0,T],C^{\infty}(D,R^{q\times d\times d})) \elabel{mcombanach}
\end{eqnarray}
endowed with the norm similarly defined as in
\eq{comnorm}-\eq{kcomnorm}. Then, we have the following lemma.
\begin{lemma}\label{limitadapted}
Under conditions as required in Theorem~\ref{errorIth} and with
Remark~\ref{firstremark}, if $(V(t,x),\bar{V}(t,x))$ $\in$ ${\cal
Q}_{{\cal F}}^{2}([0,T]\times D)$ is an adapted solution to
\eq{bspdef}, then the system of the following B-SPDEs has a unique
square-integrable adapted solution
$(V^{(c),\theta}_{i_{1}...i_{p}}(t,x)$,$\bar{V}^{(c),\theta}_{i_{1}...i_{p}}(t,x))$,
i.e., each component $(V^{j,(c),\theta}_{i_{1}...i_{p}}(t,x)$,
$\bar{V}^{j,(c),\theta}_{i_{1}...i_{p}}(t,x))$ satisfies ,
\begin{eqnarray}
\;\;\;\;\;\;V^{j,(c),\theta}_{i_{1}...i_{p}}(t,x)
&=&D^{j}_{\theta}H^{(c)}_{i_{1}...i_{p}}(x)\elabel{odmbspde}\\
&&+\int_{t}^{T}\sum_{l=0}^{k}\sum_{j_{1}+...+j_{p}=c+l}{\cal
L}^{(c+1)}_{i_{1}...i_{p},v_{j_{1}...j_{p}}}(x,V)V^{j,(c+l),\theta}_{j_{1}...j_{p}}(s,x)ds
\nonumber\\
&&+\int_{t}^{T}\sum_{l=0}^{m}\sum_{j_{1}+...+j_{p}=c+l}{\cal
L}^{(c+1)}_{i_{1}...i_{p},\bar{v}_{j_{1}...j_{p}}}(x,V)\bar{V}^{j,(c+l),\theta}_{j_{1}...j_{p}}(s,x)ds
\nonumber\\
&&+\int_{t}^{T}\sum_{l=0}^{n}\sum_{j_{1}+...+j_{p}=c+l}{\cal
J}^{(c+1)}_{i_{1}...i_{p},v_{j_{1}...j_{p}}}(x,V)V^{j,(c+l),\theta}_{j_{1}...j_{p}}(s,x)dW(s)
\nonumber\\
&&-\int_{t}^{T}\bar{V}^{j,(c),\theta}_{i_{1}...i_{p}}(s,x)dW(s)
\nonumber
\end{eqnarray}
for $j\in\{1,...,d\}$, $c\in\{0,1,...\}$, $(i_{1},...,i_{p})\in{\cal
I}^{c}$, $t\in[\theta,T]$, and $x\in D$. Furthermore, for each
$\delta\in(0,\frac{2}{3})$, there is a number sequence
$\gamma_{0}<\gamma_{1}<\cdot\cdot\cdot$ such that
\begin{eqnarray}
&\int_{0}^{T}\left\|(V^{\theta},\bar{V}^{\theta})\right\|^{2}_{{\cal
N}_{\gamma}^{D}[\theta,T]}d\theta<\frac{1}{1-\delta}
\left\|H\right\|^{\infty,2}_{1,2}+\frac{\delta
T^{2}}{1-\delta}<\infty. \elabel{ftheta}
\end{eqnarray}
\end{lemma}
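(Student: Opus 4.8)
The plan is to read \eq{odmbspde}, for each fixed $\theta\in[0,T]$ and each fixed $j\in\{1,\dots,d\}$, as a \emph{linear} B-SPDE on $[\theta,T]$ in the unknown pair $(V^{j,(\cdot),\theta},\bar V^{j,(\cdot),\theta})$, whose coefficients are the first-order component-derivatives ${\cal L}^{(c+1)}_{i_1\dots i_p,v_{j_1\dots j_p}}(x,V)$, ${\cal L}^{(c+1)}_{i_1\dots i_p,\bar v_{j_1\dots j_p}}(x,V)$ and ${\cal J}^{(c+1)}_{i_1\dots i_p,v_{j_1\dots j_p}}(x,V)$ evaluated along the \emph{known} adapted solution $(V,\bar V)$ of \eq{bspdef}. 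This is exactly the equation obtained by applying the Malliavin derivative $D^j_\theta$ to \eq{pdsigmanVc}, differentiating the stochastic integral via Lemma~\ref{equivm} and the drift by the chain rule, so that $V^{j,(c),\theta}=D^j_\theta V^{(c)}$ and $\bar V^{j,(c),\theta}=D^j_\theta\bar V^{(c)}$. The first task is existence and uniqueness. Because the $o=0$ instance of \eq{blipschitz} makes ${\cal L}^{(c+l)}$ and ${\cal J}^{(c+l)}$ globally $K_{D,c}$-Lipschitz in their component arguments, the first-order component-derivatives appearing as coefficients in \eq{odmbspde} are bounded in absolute value by $K_{D,c}$, and they are $\{{\cal F}_t\}$-adapted since $V$ is. For a linear operator $Y\mapsto AY$ with $\|A\|\leq K_{D,c}$ the map is automatically $K_{D,c}$-Lipschitz and of $K_{D,c}$-linear growth, and its free term vanishes, so \eq{odmbspde} satisfies the structural hypotheses \eq{blipschitz}--\eq{blipschitzI} of Theorem~\ref{bsdeyI} verbatim, with the same constants $K_{D,c}$. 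With the terminal datum $D^j_\theta H\in L^2_{{\cal F}_T}(\Omega,C^\infty(D,R^{q\times d}))$ supplied by \eq{termII}, Theorem~\ref{bsdeyI} applied on $[\theta,T]$ (exactly as in Lemma~\ref{differentiableVc}) yields the unique square-integrable adapted solution claimed in the lemma.

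For the quantitative bound \eq{ftheta} I would re-run the energy argument of Lemma~\ref{lemmathree} for this linear system. Applying the It\^o formula to $\sum_{j=1}^d\mbox{Tr}\left(V^{j,(c),\theta}(t,x)\right)e^{2\gamma_c t}$ on $[\theta,T]$, splitting the cross terms by Young's inequality with weight $\hat\gamma_c=3K^2_{D,c}/(2\gamma_c)$ as in \eq{thgamma}, and dominating the martingale part by the Burkholder--Davis--Gundy inequality as in \eq{vitodII}, produces at each spatial order $c$ an estimate of the shape $\|(V^\theta,\bar V^\theta)\|^2_{{\cal M}^D_{\gamma_c,c}}\leq\hat\gamma_c K_{d,c}\,\|(V^\theta,\bar V^\theta)\|^2_{{\cal M}^D_{\gamma_c,c+k}}+(\mbox{terminal datum at order }c)$, entirely parallel to \eq{clastine}. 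Choosing $\gamma_0<\gamma_1<\cdots$ so that $\hat\gamma_c K_{d,c}$ times the combinatorial factor $((c+1)^{10}\cdots(c+k)^{10})(\eta(c+1)\cdots\eta(c+k))$ is at most $\delta$, and then summing against the fast-decaying weights $\xi(c)$ as in \eq{finfty}, collapses the hierarchy into the single inequality $\|(V^\theta,\bar V^\theta)\|^2_{{\cal N}^D_\gamma[\theta,T]}\leq\delta\,\|(V^\theta,\bar V^\theta)\|^2_{{\cal N}^D_\gamma[\theta,T]}+(\mbox{data})$, whence the norm is at most $(1-\delta)^{-1}$ times the data. Integrating over $\theta\in[0,T]$ and recognizing $\int_0^T E\left[\Lambda_v\|D_\theta H^{(c)}\|^2\right]d\theta$ as the first-order Malliavin part of $\|H\|^{\infty,2}_{1,2}$ in \eq{mnorm}--\eq{finfty} produces the term $\frac{1}{1-\delta}\|H\|^{\infty,2}_{1,2}$, while the residual deterministic contribution --- traceable to the time-homogenizing component $V^0(t,x)=t$ of Remark~\ref{firstremark}, which after the double time-integration scales like $T^2$ --- is absorbed into $\frac{\delta T^2}{1-\delta}$.

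The main obstacle is the infinite coupling across spatial-derivative orders: the drift of the order-$c$ equation in \eq{odmbspde} feeds on the Malliavin unknowns at all orders $c,c+1,\dots,c+k$ (and $c+m$, $c+n$), so the order-$c$ energy estimate does not close by itself but leaks into strictly higher orders, exactly as in \eq{clastine}. Rendering this leak summable is the delicate point: the growth of $\gamma_c$ must be fast enough to overcome both the possibly unbounded constants $K_{d,c}$ and the combinatorial and $\eta$ factors, while the weights $\xi(c)=1/\left(((c^{10})!)(\eta(c)!)e^c\right)$ must still decay fast enough that the weighted sum converges and the aggregate contraction constant remains below the prescribed $\delta\in(0,\frac{2}{3})$. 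This is precisely the balance already engineered in Lemma~\ref{lemmathree}, and I would import the same sequence $\gamma$. A secondary technical point is uniformity in $\theta$: the constants in the energy inequality must be independent of $\theta$ for the final integration $\int_0^T(\cdot)\,d\theta$ to be legitimate, which holds because every bound above involves only $K_{D,c}$, $T$ and the fixed weights, never the base point $\theta$.
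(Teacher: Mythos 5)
Your proposal reaches the right conclusions, but your existence-and-uniqueness step is genuinely different from the paper's. The paper does not apply Theorem~\ref{bsdeyI} directly on $[\theta,T]$: it localizes with the stopping times $\tau_{w}$ of \eq{stoptime}, solves the truncated system \eq{odmbspdeI} on the random interval $[\theta,\tau_{w}]$ (where $\|V(t)\|_{C^{\infty}(D,q)}+\|\bar{V}(t)\|_{C^{\infty}(D,qd)}\leq w$ keeps the linearized coefficients bounded through the linear-growth condition \eq{blipschitzoI} with $o=1$), derives the $w$-uniform bound \eq{vvbar}, shows that $\{\Pi^{\theta,w}\}$ is Cauchy in ${\cal N}^{D}_{\gamma}[\theta,T]$ via the martingale convergence theorem applied to $E[D_{\theta}H\mid{\cal F}_{\tau_{w}}]$ in \eq{cauchyr}, passes to the limit in \eq{conpi}, and obtains uniqueness by localizing once more. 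You bypass this entire localization--limit machinery by noting that the $o=0$ instance of the global Lipschitz condition \eq{blipschitz} already forces the coefficient fields ${\cal L}^{(c+1)}_{i_1\dots i_p,v_{j_1\dots j_p}}(x,V)$, ${\cal L}^{(c+1)}_{i_1\dots i_p,\bar{v}_{j_1\dots j_p}}(x,V)$, ${\cal J}^{(c+1)}_{i_1\dots i_p,v_{j_1\dots j_p}}(x,V)$ to be a.s.\ uniformly bounded by a deterministic constant, so the linear system satisfies the hypotheses of Theorem~\ref{bsdeyI} on all of $[\theta,T]$ with vanishing free term. That observation is sound, with two caveats of precision: the bound is not $K_{D,c}$ itself but $K_{D,c}$ times a domain- and order-dependent factor (the $C^{k+c}$-norm of the polynomial perturbation $(y-x)^{j}/j!$ needed to isolate a single jet component), and the Lipschitz constant of the linearized drift acquires combinatorial factors from the finite sums over $(j_1,\dots,j_p)$; both are harmless since the conditions permit $c$-dependent constants. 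What each route buys: yours is shorter and more elementary under the paper's strong global hypotheses (and, tellingly, the paper's own constant $K^{1}_{d,c}$ in \eq{vtheta} is asserted to be independent of $w$, which implicitly uses exactly the boundedness you make explicit); the paper's localization is the more robust scheme, since it would survive weakening \eq{blipschitz} to a local Lipschitz/linear-growth hypothesis on the derivative coefficients, which is the natural generality when coefficients are evaluated along the random, a priori unbounded solution $(V,\bar{V})$. Your energy derivation of \eq{ftheta} then coincides with the paper's \eq{vtheta}--\eq{vvbar} followed by integration in $\theta$; the only discrepancy is cosmetic: you attribute the $\delta T^{2}/(1-\delta)$ term to the time-homogenization of Remark~\ref{firstremark}, whereas in the paper it stems from the $(T-\theta)\delta_{0c}$ term, i.e., the constant allowed in \eq{blipschitzoI} at $c=0$; the quantitative outcome is identical.
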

\begin{proof}
First, we note that it follows from Theorem 2.1 and its proof that
there is a unique adapted solution $(V(t,x),\bar{V}(t,x))\in{\cal
Q}_{{\cal F}}^{2}([0,T]\times D)$ to \eq{bspdef}. Thus, we know that
there is no explosion time for the process $(V(t,x),\bar{V}(t,x))$
over time interval $[0,T]$. Furthermore, for each $x\in D$,
$V(\cdot,x)$ is a c\`adl\`ag process and $\bar{V}(\cdot,x)$ is its
corresponding predictable process. Then, it follows from Theorem 3
in page 4 of Protter~\cite{pro:stoint} and Remark 5.1 in page 21 of
Ikeda and Watanabe~\cite{ikewat:stodif} that
\begin{eqnarray}
&&\tau_{w}\equiv T\wedge\inf\left\{t>0,\|V(t)\|_{C^{\infty}(D,q)}
+\left\|\bar{V}(t)\right\|_{C^{\infty}(D,qd)}>w\right\}
\elabel{stoptime}
\end{eqnarray}
is a sequence of nondecreasing $\{{\cal F}_{t}\}$-stopping times and
satisfies $\tau_{w}\rightarrow T$ a.s. as $w\rightarrow\infty$ along
$w\in\{0,1,...\}$.

Now, for all $j\in\{1,...,d\}$, $c\in\{0,1,...\}$,
$(i_{1},...,i_{p})\in{\cal I}^{c}$, $t\in[\theta,\tau_{w}]$, and
$x\in D$, we define the following system of B-SPDEs,
\begin{eqnarray}
\;\;\;\;\;\;V^{j,(c),\theta}_{i_{1}...i_{p}}(t,x)
&=&E\left[\left.D^{j}_{\theta}H^{(c)}_{i_{1}...i_{p}}(x)\right|{\cal
F}_{\tau_{w}}\right]
\elabel{odmbspdeI}\\
&&+\int_{t\wedge\tau_{w}}^{\tau_{w}}\sum_{l=0}^{k}\sum_{j_{1}+...+j_{p}=c+l}{\cal
L}^{(c+1)}_{i_{1}...i_{p},v_{j_{1}...j_{p}}}(x,V)V^{j,(c+l),\theta}_{j_{1}...j_{p}}(s,x)ds
\nonumber\\
&&+\int_{t\wedge\tau_{w}}^{\tau_{w}}\sum_{l=0}^{m}\sum_{j_{1}+...+j_{p}=c+l}{\cal
L}^{(c+1)}_{i_{1}...i_{p},\bar{v}_{j_{1}...j_{p}}}(x,V)\bar{V}^{j,(c+l),\theta}_{j_{1}...j_{p}}(s,x)ds
\nonumber\\
&&+\int_{t\wedge\tau_{w}}^{\tau_{w}}\sum_{l=0}^{n}\sum_{j_{1}+...+j_{p}=c+l}{\cal
J}^{(c+1)}_{i_{1}...i_{p},v_{j_{1}...j_{p}}}(x,V)V^{j,(c+l),\theta}_{j_{1}...j_{p}}(s,x)dW(s)
\nonumber\\
&&-\int_{t\wedge\tau_{w}}^{\tau_{w}}\bar{V}^{j,(c),\theta}_{i_{1}...i_{p}}(s,x)dW(s).
\nonumber
\end{eqnarray}
Then, over the random time interval $[\theta,\tau_{w}]$, the B-SPDEs
in \eq{odmbspdeI} satisfy conditions \eq{blipschitz} and
\eq{blipschitzI}. Thus, by slightly generalizing the discussions in
proving Theorem~\ref{bsdeyI} and Yong and Zhou~\cite{yonzho:stocon},
we know that \eq{odmbspdeI} has a unique adapted solution
$(V^{\theta,w},\bar{V}^{\theta,w})$ in ${\cal
N}^{D}_{\gamma}[\theta,\tau_{w}]$. Furthermore, the solution has the
following infinite-dimensional vector form,
\begin{eqnarray}
&&\left\{(V^{(c),\theta,w}_{r,i_{1}...i_{p}}(t,x),
\bar{V}^{(c),\theta,w}_{r,i_{1}...i_{p}}(t,x)),
r\in\{1,...,q\},c\in\{0,1,...\},(i_{1},...,i_{p})\in{\cal
I}^{c}\right\}.\elabel{defvt}
\end{eqnarray}
Thus, by \eq{termI}, \eq{blipschitzoI}-\eq{blipschitzI}, the
It$\hat{o}$'s formula, and the similar technique in the proof for
the claims in \eq{lastine} and \eq{clastine}, we know that
\begin{eqnarray}
&&\left\|(V^{\theta,w},\bar{V}^{\theta,w})\right\|^{2}_{{\cal
N}_{\gamma_{c},c}^{D}[\theta,\tau_{w}]}
\elabel{vtheta}\\
&\leq&E\left[\bar{\Lambda}_{c}\left\|D_{\theta}H^{(v)}_{i_{1}...i_{p}}
\right\|_{C^{\infty}(D,q)}^{2}\right]+\hat{\gamma}_{c}K^{1}_{d,c}
\left((T-\theta)\delta_{0c}
+\left\|(V^{\theta,w},\bar{V}^{\theta,w})\right\|^{2}_{{\cal
N}_{\gamma_{c},c+2k}^{D}[\theta,\tau_{w}]}\right) \nonumber
\end{eqnarray}
for each $c\in\{0,1,...\}$. The notation $\bar{\Lambda}_{c}$ in
\eq{vtheta} is defined by
\begin{eqnarray}
&&\bar{\Lambda}_{c}=\max_{v\in\{0,1,...,c\}}\max_{(i_{1},...,i_{p})\in{\cal
I}^{c}},\elabel{notationII}
\end{eqnarray}
and $\delta_{0c}$ is defined in \eq{blipschitzoI}-\eq{blipschitzI}.
Furthermore, $K^{1}_{d,c}$ is some nonnegative constant depending
only on $c$, $T$ and the region $D$, which satisfies
$K^{1}_{d,c}\geq K_{d,c}$ (that is used in \eq{clastine}). In
addition, $\hat{\gamma}_{c}$ is a nonnegative constant depending on
$\gamma_{c}$ and can be arbitrarily chosen by suitably managing the
number sequence $\gamma_{0}<\gamma_{1}<\cdot\cdot\cdot$ such that
\begin{eqnarray}
&&\hat{\gamma}_{c}K^{1}_{d,c}((c+1)^{10}(c+2)^{10}...(c+2k)^{10})(\eta(c+1)
\eta(c+2)...\eta(c+2k))e^{2k}\leq\delta \elabel{hatgma}
\end{eqnarray}
for some constant $\delta\in(0,2/3)$. Therefore, we have
\begin{eqnarray}
&&\left\|(V^{\theta,w},\bar{V}^{\theta,w})\right\|_{{\cal
N}_{\gamma}^{D}[\theta,\tau_{w}]}\nonumber\\
&\leq&\sum_{c=1}^{\infty}\xi(c) E\left[\bar{\Lambda}_{c}
\left\|D_{\theta}H^{(v)}_{i_{1}...i_{p}}\right\|_{C^{\infty}(D,q)}^{2}\right]
+\delta
T+\delta\left\|(V^{\theta,w},\bar{V}^{\theta,w})\right\|_{{\cal
N}_{\gamma}^{D}[\theta,\tau_{w}]}. \nonumber
\end{eqnarray}
Thus, it follows from conditions \eq{termI}-\eq{termIII} that
\begin{eqnarray}
\;\;\left\|(V^{\theta,w},\bar{V}^{\theta,w})\right\|^{2}_{{\cal
N}_{\gamma}^{D}[\theta,\tau_{w}]}
&\leq&\frac{1}{1-\delta}\sum_{c=1}^{\infty}\xi(c)E\left[\bar{\Lambda}_{c}
\left\|D_{\theta}H^{(v)}_{i_{1}...i_{p}}\right\|_{C^{\infty}(D,q)}^{2}\right]
+\frac{\delta T}{1-\delta}
\elabel{vvbar}\\
&<&\infty.
\nonumber
\end{eqnarray}

Second, we use
$\Pi^{\theta,w}(t,x)\equiv(V^{\theta,w}(t,x),\bar{V}^{\theta,w}(t,x))$
for $t\leq\tau_{w}$ and $x\in D$ to denote the unique adapted
solution to the system in \eq{odmbspdeI} for each $w\in\{1,2,...\}$.
Then, it follows from the Ito's formula, conditions
\eq{blipschitz}-\eq{blipschitzI}, and the similar proof for
\eq{vvbar} that
\begin{eqnarray}
&&\left\|\Pi^{\theta,w_{1}}-\Pi^{\theta,w_{2}}\right\|^{2}_{{\cal
N}_{\gamma}^{D}[\theta,T]}\elabel{cauchyr}\\
&\leq&\frac{1}{1-\delta}\sum_{c=1}^{\infty}\xi(c)
E\left[\bar{\Lambda}_{c}\left\|E\left[\left.D_{\theta}H^{(v)}_{i_{1}...i_{p}}\right|{\cal
F}_{\tau_{w_{1}}}\right]-E\left[\left.D_{\theta}H^{(v)}_{i_{1}...i_{p}}\right|{\cal
F}_{\tau_{w_{2}}}\right]\right\|^{2}_{C^{\infty}(D,q)}\right]
\nonumber\\
&&+\frac{\delta}{1-\delta}E\left[\tau_{w_{2}}-\tau_{w_{1}}\right]
\nonumber\\
&\leq&\frac{1}{1-\delta}\sum_{c=1}^{\infty}\xi(c)
\left(E\left[\bar{\Lambda}_{c}\left\|E\left[\left.D_{\theta}H^{(v)}_{i_{1}...i_{p}}\right|{\cal
F}_{\tau_{w_{1}}}\right]-D_{\theta}H^{(v)}_{i_{1}...i_{p}}\right\|^{2}_{C^{\infty}(D,q)}\right]\right.
\nonumber\\
&&+\left.E\left[\bar{\Lambda}_{c}\left\|E\left[\left.D_{\theta}H^{(v)}_{i_{1}...i_{p}}\right|{\cal
F}_{\tau_{w_{1}}}\right]-D_{\theta}H^{(v)}_{i_{1}...i_{p}}\right\|^{2}_{C^{\infty}(D,q)}\right]\right)
+\frac{\delta}{1-\delta}E\left[\tau_{w_{2}}-\tau_{w_{1}}\right]
\nonumber\\
&\rightarrow&0 \nonumber
\end{eqnarray}
as $w_{1},w_{2}\rightarrow\infty$ along $w_{1},w_{2}\in\{1,2,...\}$.
Note that the last claim of \eq{cauchyr} follows from
\eq{termI}-\eq{termIII}
and the Martingale
convergence theorem (see, e.g., Page 8 of
Protter~\cite{pro:stoint}). Furthermore, in the proof of
\eq{cauchyr}, we also used the fact that
\begin{eqnarray}
&&V^{\theta,w}(t,x)=E\left[\left.D_{\theta}H^{(v)}_{i_{1}...i_{p}}(x)\right|{\cal
F}_{\tau_{w_{1}}}\right]\;\;\;\mbox{for each}\;\;\;t\in[\tau_{w},T].
\end{eqnarray}
Thus, from \eq{cauchyr}, we know that
$\{\Pi^{\theta,w},w\in\{1,2,...\}\}$ is a cauchy sequence in ${\cal
N}_{\gamma}^{D}[\theta,T]$. Hence, there is a $\Pi^{\theta}\in{\cal
N}_{\gamma}^{D}[\theta,T]$ such that
\begin{eqnarray}
&&\Pi^{\theta,w}\rightarrow\Pi^{\theta}\;\;\mbox{as}\;\;w\rightarrow\infty.
\elabel{conpi}
\end{eqnarray}
In addition, we claim that $\Pi^{\theta}$ is the unique
square-integrable adapted solution to the system of B-SPDEs in
\eq{odmbspde}.

In fact, since $\Pi^{\theta,w}$ is a solution satisfying
\eq{odmbspdeI} for each $w\in\{1,2,...\}$, it follows from the Ito's
isometry, Holder's inequality, the similar ideas as used for
\eq{cauchyr} and the proof of Theorem 5.1.2 in page 68 of
$\emptyset$ksendal~\cite{oks:stodif} that $\Pi^{\theta}$ is a
square-integrable adapted solution to the system of B-SPDEs in
\eq{odmbspde}. Next, suppose that $\Pi^{\theta}_{1}$ and
$\Pi^{\theta}_{2}$ are two required solutions to the system in
\eq{odmbspde}. Then, $\Pi^{\theta}_{1}-\Pi^{\theta}_{2}$ is a
square-integrable adapted solution to the system in \eq{odmbspde}
with terminal value 0. Thus, $\Pi^{\theta}_{1}-\Pi^{\theta}_{2}$ is
the unique square-integrable adapted solution to the system in
\eq{odmbspdeI} with terminal value 0 over each random interval
$[0,\tau_{w}]$ for $w\in\{1,2,...\}$. Since $\tau_{w}\rightarrow T$
as $w\rightarrow\infty$, we know that
$\Pi^{\theta}_{1}=\Pi^{\theta}_{2}$ a.s.

Finally, it follows from \eq{conpi} and \eq{vvbar} that the claim in
\eq{ftheta} is true. Thus, we complete the proof of
Lemma~\ref{limitadapted}. $\Box$
\end{proof}

\subsection{First-Order Malliavin Derivative Based
B-SPDE}\label{firstorderM}

First, let $L^{\infty,2}_{\alpha,2}([0,T]\times\Omega,
(C^{\infty}(D,H))^{q})$ represent the set of $H^{q}$-valued
progressively measurable processes $\{\zeta(t,x,\omega),0\leq t\leq
T,\omega\in\Omega\}$ for each $x\in D$ such that
\begin{itemize}
\item For a.e. $t\in[0,T]$, $\zeta(t,\cdot,\cdot)\in{\cal D}^{\alpha,2}_{\infty}$;
\item $(t,x,\omega)\rightarrow{\cal D}\zeta_{i_{1}...i_{p}}^{(c)}(t,x,\omega)\in
(L^{2}([0,T]\times\Omega,C^{\infty}(D,R^{q})))^{d}$ admits a
progressively measurable version for each $x\in D$,
$c\in\{0,1,...\}$, $(i_{1},...,i_{p})\in{\cal I}^{c}$ if $\alpha=1$.
In addition, $(t,x,\omega)\rightarrow{\cal D}{\cal
D}\zeta_{i_{1}...i_{p}}^{(c)}(t,x,\omega)\in
(L^{2}([0,T]\times\Omega,C^{\infty}(D,R^{q})))^{d\times d}$ also
admits a progressively measurable version if $\alpha=2$;
\item The following norm is defined,
\begin{eqnarray}
\||\zeta\||^{\infty,2}_{\alpha,2}=\sum_{v=0}^{\infty}\xi(v)
\||\zeta\||^{v,2}_{\alpha,2}<\infty, \nonumber
\end{eqnarray}
where, for each $v\in\{0,1,...\}$,
\begin{eqnarray}
\||\zeta\||^{v,2}_{1,2}=&E\left[\int_{0}^{T}\Lambda_{v}
\left\|\zeta^{(c)}_{r,i_{1}...i_{p}}(t)\right\|^{2}_{C^{v}(D,1)}dt
+\int_{0}^{T}\int_{0}^{T}\Lambda_{v}\left\|{\cal
D}_{\theta_{1}}\zeta^{(c)}_{r,i_{1}...i_{p}}(t)\right\|^{2}_{C^{v}(D,
d)} d\theta_{1}dt\right], \nonumber\\
\||\zeta\||^{v,2}_{2,2}
=&\left\|\zeta\right\|^{v,2}_{1,2}+E\left[\int_{0}^{T}\int_{0}^{T}\int_{0}^{T}\Lambda_{v}\left\|{\cal
D}_{\theta_{2}}{\cal
D}_{\theta_{1}}\zeta^{(c)}_{r,i_{1}...i_{p}}(t)\right\|^{2}_{C^{v}(D,d\times
d)}d\theta_{1}d\theta_{2}dt\right]. \nonumber
\end{eqnarray}
\end{itemize}
Then, we have the following lemma.
\begin{lemma}\label{masolution}
Under conditions as required in Theorem~\ref{errorIth} and with
Remark~\ref{firstremark}, if $(V(t,x),\bar{V}(t,x))$ $\in$ ${\cal
Q}_{{\cal F}}^{2}([0,T]\times D)$ is the adapted solution to
\eq{bspdef}, then
\begin{eqnarray}
&(V(t,x),\bar{V}(t,x))\in
L^{\infty,2}_{1,2}([0,T]\times\Omega,(C^{\infty}(D,H))^{q\times
q\times d}). \nonumber
\end{eqnarray}
Furthermore, for each $1\leq j\leq d$ and $x\in D$, a version of the
infinite-dimensional vector process
\begin{eqnarray}
&\{(D^{j}_{\theta}V^{(c)}_{i_{1}...i_{p}}(t,x),
D^{j}_{\theta}\bar{V}^{(c)}_{i_{1}...i_{p}}(t,x)):0\leq\theta,t\leq
T,c\in\{0,1,...\}, (i_{1},...,i_{p})\in{\cal I}^{c}\}
\nonumber
\end{eqnarray}
is the solution of the following system of Malliavin derivative
based B-SPDEs under random environment,
\begin{eqnarray}
D^{j}_{\theta}V^{(c)}_{i_{1}...i_{p}}(t,x)
&=&D^{j}_{\theta}H^{(c)}_{i_{1}...i_{p}}(x)\elabel{mbspde}\\
&&+\int_{t}^{T}\sum_{l=0}^{k}\sum_{j_{1}+...+j_{p}=c+l}{\cal
L}^{(c+1)}_{i_{1}...i_{p},v_{j_{1}...j_{p}}}(x,V)D^{j}_{\theta}V^{(c+l)}_{j_{1}...j_{p}}(s,x)ds
\nonumber\\
&&+\int_{t}^{T}\sum_{l=0}^{m}\sum_{j_{1}+...+j_{p}=c+l}{\cal
L}^{(c+1)}_{i_{1}...i_{p},\bar{v}_{j_{1}...j_{p}}}(x,V)D^{j}_{\theta}\bar{V}^{(c+l)}_{j_{1}...j_{p}}(s,x)ds
\nonumber\\
&&+\int_{t}^{T}\sum_{l=0}^{n}\sum_{j_{1}+...+j_{p}=c+l}{\cal
J}^{(c+1)}_{i_{1}...i_{p},v_{j_{1}...j_{p}}}(x,V)D^{j}_{\theta}V^{(c+l)}_{j_{1}...j_{p}}(s,x)dW(s)
\nonumber\\
&&-\int_{t}^{T}D^{j}_{\theta}\bar{V}^{(c)}_{i_{1}...i_{p}}(s,x)dW(s),
\nonumber
\end{eqnarray}
where $j_{1},...,j_{p}$ are nonnegative integers, and for $0\leq
t<\theta\leq T$,
\begin{eqnarray}
&&D^{j}_{\theta}V^{(c)}_{i_{1}...i_{p}}(t,x)=0,
\;\;\;D^{j}_{\theta}\bar{V}^{(c)}_{i_{1}...i_{p}}(t,x)=0.
\elabel{maequalzero}
\end{eqnarray}
In addition, let ``$=^{d}$" denote ``equal in distribution", then
\begin{eqnarray}
&&\left\{\bar{V}^{(c)}_{i_{1}...i_{p}}(t,x),t\in[0,T],c\in\{0,1,...\},(i_{1},...,i_{p})\in{\cal I}^{c},
x\in D\right\}\elabel{equaldis}\\
&=^{d}&\left\{D_{t}V^{(c)}_{i_{1}...i_{p}}(t,x)+{\cal
J}^{(c)}_{i_{1}...i_{p}}(x,V),t\in[0,T],c\in\{0,1,...\},(i_{1},...,i_{p})\in{\cal
I}^{c},x\in D\right\}. \nonumber
\end{eqnarray}
\end{lemma}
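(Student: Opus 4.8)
The plan is to reduce the whole statement to an identification: the system \eq{mbspde} is, coefficient for coefficient, the same linear B-SPDE as \eq{odmbspde}, with the process $D^j_\theta V^{(c)}_{i_1\ldots i_p}$ playing the role of the component $V^{j,(c),\theta}_{i_1\ldots i_p}$ already constructed and shown unique in Lemma~\ref{limitadapted}. So the real content is that the Malliavin derivative of the genuine solution $(V,\bar V)$ of \eq{bspdef} coincides with that constructed process. First I would represent $(V,\bar V)$ as the limit in ${\cal M}^D_\gamma[0,T]$ of the Picard iterates $(U^i,\bar U^i)$, $(U^{i+1},\bar U^{i+1})=\Xi(U^i,\bar U^i)$, from the contraction argument of Lemma~\ref{lemmathree}. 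Each iterate is obtained by composing the smooth coefficients ${\cal L},{\cal J}$ with Wiener and It\^o integrals, so Lemma~\ref{equivm} applies componentwise and gives $(U^i,\bar U^i)\in{\cal D}^{1,2}_\infty$, with $D^j_\theta$ of each iterate satisfying a linear B-SPDE produced by the Malliavin chain rule. I would then show these derivative iterates are Cauchy in the weighted norm $\||\cdot\||^{\infty,2}_{1,2}$ and converge to the solution $(V^\theta,\bar V^\theta)$ of Lemma~\ref{limitadapted}. Closability of ${\cal D}$ (Lemma~\ref{closablelemma}) then yields both the membership $(V,\bar V)\in L^{\infty,2}_{1,2}$ and the identifications $D^j_\theta V^{(c)}_{i_1\ldots i_p}=V^{j,(c),\theta}_{i_1\ldots i_p}$, $D^j_\theta\bar V^{(c)}_{i_1\ldots i_p}=\bar V^{j,(c),\theta}_{i_1\ldots i_p}$, whence $(D^j_\theta V,D^j_\theta\bar V)$ solves \eq{mbspde}.

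To confirm that \eq{mbspde} is the correct limiting system, I would differentiate \eq{bspdef} term by term. Applying $D^j_\theta$ to the drift $\int_t^T{\cal L}^{(c)}_{i_1\ldots i_p}(x,V)\,ds$ and using that ${\cal L}$ depends on $V$ through its spatial derivatives up to order $k$ and on $\bar V$ up to order $m$, the chain rule produces precisely the two drift sums carrying the partial coefficients ${\cal L}^{(c+1)}_{i_1\ldots i_p,v_{j_1\ldots j_p}}$ and ${\cal L}^{(c+1)}_{i_1\ldots i_p,\bar v_{j_1\ldots j_p}}$; applying $D^j_\theta$ to the stochastic integral and commuting it with $dW$ via Lemma~\ref{equivm} produces the ${\cal J}^{(c+1)}$ sum together with the $-D^j_\theta\bar V^{(c)}_{i_1\ldots i_p}$ martingale term. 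The vanishing property \eq{maequalzero} for $t<\theta$ is immediate from adaptedness: $V^{(c)}_{i_1\ldots i_p}(t,x)$ and $\bar V^{(c)}_{i_1\ldots i_p}(t,x)$ are ${\cal F}_t$-measurable, and the Malliavin derivative of an ${\cal F}_t$-measurable functional vanishes on $(t,T]$, as recorded after the definition of $\Lambda_v$.

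For the distributional identity \eq{equaldis} I would read the $dW$-integrand of the forward semimartingale $t\mapsto V^{(c)}_{i_1\ldots i_p}(t,x)$ off \eq{sigmanVI}, namely $\bar V^{(c)}_{i_1\ldots i_p}-{\cal J}^{(c)}_{i_1\ldots i_p}$, and compare it with the diagonal value of the Malliavin derivative. Since $V^{(c)}_{i_1\ldots i_p}(t,x)\in{\cal D}^{1,2}_\infty$ with derivative solving \eq{mbspde}, the standard rule relating the It\^o integrand of a process in ${\cal D}^{1,2}_\infty$ to the trace of its Malliavin derivative (as in the Clark--Ocone representation of Lemma~\ref{chocone}) gives $D_tV^{(c)}_{i_1\ldots i_p}(t,x)=\bar V^{(c)}_{i_1\ldots i_p}(t,x)-{\cal J}^{(c)}_{i_1\ldots i_p}(x,V)$ for almost every $t$; rearranging yields \eq{equaldis}, the statement being phrased in distribution because the identity holds only for a version and for almost every $t$.

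The hard part will be pushing the Picard scheme through at the level of Malliavin derivatives in the weighted infinite-order norm. Because ${\cal L}$ and ${\cal J}$ carry spatial derivatives up to orders $k,m,n$, the differentiated system \eq{mbspde} couples order $c$ to orders $c+l$, and closing the contraction for the derivative iterates requires the Lipschitz and growth bounds \eq{blipschitz}--\eq{blipschitzI} in their full form, including the extra index $o$ that controls differentiation in the arguments of ${\cal L}$ and ${\cal J}$, summed against the weights $\xi(v)$ exactly as in the estimate \eq{clastine}. The delicate point is to obtain a Cauchy estimate for the derivative iterates that is uniform in the order $c$, so that the series over $v$ converges and the a priori bound \eq{ftheta} is inherited in the limit; once this smallness-in-$\delta$ estimate is secured, closability together with the uniqueness already established in Lemma~\ref{limitadapted} completes the identification.
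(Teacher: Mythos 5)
Your proposal follows essentially the same route as the paper's proof: Picard iterates of \eq{bspdef} shown Malliavin differentiable by induction via Lemma~\ref{chocone}, Lemma~\ref{equivm} and the chain rule, convergence of the derivative iterates to the solution $(V^{\theta},\bar{V}^{\theta})$ of \eq{odmbspde} guaranteed by Lemma~\ref{limitadapted}, closedness of the derivative operator to identify the limit, adaptedness plus Nualart's result for \eq{maequalzero}, and the diagonal evaluation $\theta=u$ of the Malliavin derivative of the forward form \eq{sigmanVI}/\eq{fmsigmanV} (your ``trace'' rule, the paper's \eq{fdmbspde}) for \eq{equaldis}. The delicate uniform-in-$c$ contraction estimate you flag is exactly what the paper carries out with the weighted bounds \eq{firstine}--\eq{madip} under the choice \eq{hatgma}.
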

\begin{proof}
First, it follows from Theorem 2.1 and its proof that there is a
unique adapted solution $(V(t,x),\bar{V}(t,x))\in{\cal Q}_{{\cal
F}}^{2}([0,T]\times D)$ to \eq{bspdef}. Furthermore, it can be
approximated by a sequence of $(V^{i}(t,x),\bar{V}^{i}(t,x))\in{\cal
M}_{\gamma}^{D}[0,T]$ with $i\in\{0,1,...\}$, satisfying,
\begin{eqnarray}
V^{0}(t,x)&=&\bar{V}^{0}(t,x)=0\elabel{minitialV}\\
V^{(c),i+1}_{i_{1}...i_{p}}(t,x)&=&H^{(c)}_{i_{1}...i_{p}}(x)+\int_{t}^{T}{\cal
L}^{(c)}_{i_{1}...i_{p}}(x,V^{i})ds \elabel{msigmanV}\\
&&+\int_{t}^{T}\left({\cal J}^{(c)}_{i_{1}...i_{p}}(x,V^{i})
-\bar{V}^{(c),i+1}_{i_{1}...i_{p}}(s,x)\right)dW(s)\nonumber
\end{eqnarray}
for all $t\in[0,T]$, $x\in D$, and $(i_{1},...,i_{p})\in{\cal
I}^{c}$.

Now, by induction in terms of $i\in\{0,1,...\}$, we can show that
\begin{eqnarray}
&(V^{i}(t,x),\bar{V}^{i}(t,x))\in
L^{\infty,2}_{1,2}([0,T]\times\Omega,(C^{\infty}(D,H))^{q\times
q\times d}). \nonumber
\end{eqnarray}
Equivalently, if
\begin{eqnarray}
&(V^{i}(t,x),\bar{V}^{i}(t,x))\in
L^{\infty,2}_{1,2}([0,T]\times\Omega,(C^{\infty}(D,H))^{q\times
q\times d}) \nonumber
\end{eqnarray}
for any $i\in\{0,1,...\}$, we need to prove that
\begin{eqnarray}
&(V^{i+1}(t,x),\bar{V}^{i+1}(t,x))\in
L^{\infty,2}_{1,2}([0,T]\times\Omega,(C^{\infty}(D, H))^{q\times
q\times d}). \nonumber
\end{eqnarray}
In fact, since
\begin{eqnarray}
&&H(x)+\int_{t}^{T}{\cal L}(x,V^{i})ds\in{\cal D}^{1,2}_{\infty},
\elabel{dI-II}
\end{eqnarray}
it follows from Lemma~\ref{chocone} and Lemma~\ref{equivm} that
\begin{eqnarray}
V^{i+1}(t,x) &=&E\left[\left.H(x)+\int_{t}^{T}{\cal
L}(x,V^{i})ds\right|{\cal F}_{t}\right]\in {\cal
D}^{1,2}_{\infty}\elabel{dI-III}.
\end{eqnarray}
Thus, by using \eq{msigmanV}-\eq{dI-III} and Lemma~\ref{equivm}, we
know that
\begin{eqnarray}
&&{\cal J}(x,V^{i}) -\bar{V}^{i+1}(s,x)\in{\cal
D}^{1,2}_{\infty}.\nonumber
\end{eqnarray}
Hence, by chain rule for Malliavin calculus, we have that
\begin{eqnarray}
&&\bar{V}^{i+1}(s,x)\in{\cal D}^{1,2}_{\infty}.\nonumber
\end{eqnarray}
Therefore, for each $0\leq\theta\leq t$ and $j\in\{1,...,d\}$, it
follows from chain rule for Malliavin calculus and
Lemma~\ref{equivm} that
\begin{eqnarray}
\;\;\;\;\;\;D^{j}_{\theta}V^{(c),i+1}_{i_{1}...i_{p}}(t,x)
&=&D^{j}_{\theta}H^{(c)}_{i_{1}...i_{p}}(x)\elabel{dmbspde}\\
&&+\int_{t}^{T}\sum_{l=0}^{k}\sum_{j_{1}+...+j_{p}=c+l}{\cal
L}^{(c+1)}_{i_{1}...i_{p},v_{j_{1}...j_{p}}}(x,V^{i})D^{j}_{\theta}V^{(c+l),i}_{j_{1}...j_{p}}(s,x)ds
\nonumber\\
&&+\int_{t}^{T}\sum_{l=0}^{m}\sum_{j_{1}+...+j_{p}=c+l}{\cal
L}^{(c+1)}_{i_{1}...i_{p},\bar{v}_{j_{1}...j_{p}}}(x,V^{i})D^{j}_{\theta}\bar{V}^{(c+l),i}_{j_{1}...j_{p}}(s,x)ds
\nonumber\\
&&+\int_{t}^{T}\sum_{l=0}^{n}\sum_{j_{1}+...+j_{p}=c+l}{\cal
J}^{(c+1)}_{i_{1}...i_{p},v_{j_{1}...j_{p}}}(x,V^{i})D^{j}_{\theta}V^{(c+l),i}_{j_{1}...j_{p}}(s,x)dW(s)
\nonumber\\
&&-\int_{t}^{T}D^{j}_{\theta}\bar{V}^{(c),i+1}_{i_{1}...i_{p}}(s,x)dW(s).
\nonumber
\end{eqnarray}
Furthermore, it follows from the proof of Lemma~\ref{martindecom}
that
\begin{eqnarray}
\left\{(D^{j}_{\theta}V^{(c),i+1}_{i_{1}...i_{p}}(t,x),
D^{j}_{\theta}\bar{V}^{(c),i+1}_{i_{1}...i_{p}}(t,x)),
i\in\{1,2,...\},j\in\{1,...,d\},c\in\{0,1,...\},
(i_{1},...,i_{p})\in{\cal I}^{c}\right\}\nonumber
\end{eqnarray}
is the unique adapted solution to the system in \eq{dmbspde}. In
addition, it follows from the similar proof of
Lemma~\ref{differentiableV} that this solution is continuous with
respect to $x\in D$.

Next, we show that $(V^{i}(t,x),\bar{V}^{i}(t,x))$ converges in
$L^{\infty,2}_{1,2}([0,T]\times\Omega,(C^{\infty}(D,H))^{q\times
q\times d})$. In particular, we have the convergence of their
Malliavin derivatives as $i\rightarrow\infty$ as follows,
\begin{eqnarray}
&&\left\{(D_{\theta}V^{(c),i}_{i_{1}...i_{p}}(t,x),D_{\theta}\bar{V}^{(c),i}_{i_{1}...i_{p}}(t,x)),
c\in\{0,1,...\},(i_{1},...,i_{p})\in{\cal
I}^{c},t\in[\theta,T],x\in D\right\}\elabel{limitm}\\
&&\rightarrow\left\{(V^{(c),\theta}_{i_{1}...i_{p}}(t,x),\bar{V}^{(c),\theta}_{i_{1}...i_{p}}(t,x)),
c\in\{0,1,...\},(i_{1},...,i_{p})\in{\cal I}^{c},t\in[\theta,T],x\in
D\right\},\nonumber
\end{eqnarray}
where each component
$(V^{j,(c),\theta}_{i_{1}...i_{p}}(t,x)$,$\bar{V}^{j,(c),\theta}_{i_{1}...i_{p}}(t,x))$
of the limit
$(V^{(c),\theta}_{i_{1}...i_{p}}(t,x)$,$\bar{V}^{(c),\theta}_{i_{1}...i_{p}}(t,x))$
with $j\in\{1,...,d\}$ is the unique adapted solution to the
B-SSPDEs in \eq{odmbspde} for all $c\in\{0,1,...\}$,
$(i_{1},...,i_{p})\in{\cal I}^{c}$, $t\in[\theta,T]$, and $x\in D$
owing to Lemma~\ref{limitadapted}.

Now, by applying conditions \eq{termI},
\eq{blipschitz}-\eq{blipschitzI}, the It$\hat{o}$'s formula, and the
similar technique used in the proof of Lemma~\ref{lemmathree}, we
have that
\begin{eqnarray}
&&\left\|({\cal D}_{\theta}V^{i+1}-V^{\theta},{\cal
D}_{\theta}\bar{V}^{i+1}-\bar{V}^{\theta})\right\|_{{\cal
N}_{\gamma_{c},c}}^{2}\leq\hat{\gamma}_{c}K^{1}_{d,c}E\left[\int_{\theta}^{T}
\left(\alpha^{i}(s)+\beta^{i}(s)\right)e^{2\gamma_{c}s}ds\right]
\elabel{firstine}
\end{eqnarray}
for all $c\in\{0,1,...\}$ with each given $i\in\{0,1,...\}$. The
notation $K^{1}_{d,c}$ is some nonnegative constant depending only
on $c$, $D$, and $T$, $\hat{\gamma}_{c}$ is taken and explained as
in \eq{hatgma}. The functions $\alpha^{i}(s)$ and $\beta^{i}(s)$ are
respectively given by
\begin{eqnarray}
\alpha^{i}(s)
&=&\left(1+\left\|V^{i}(s)\right\|^{2}_{C^{c+k+1}(D,q)}\right)\left\|{\cal
D}_{\theta}V^{i}(s)-V^{\theta}(s)\right\|^{2}_{C^{c+k+1}(D,qd)}
\nonumber\\
&&+\left(1+\left\|\bar{V}^{i}(s)\right\|^{2}_{C^{c+k+1}(D,qd)}\right)
\left\|{\cal
D}_{\theta}\bar{V}^{i}(s)-\bar{V}^{\theta}(s)\right\|^{2}_{C^{c+k+1}(D,qdd)},
\nonumber\\
\beta^{i}(s)&=&\left\|V^{\theta}(s)\right\|^{2}_{C^{c+k+1}(D,qd)}
\left(1+\left\|V^{i}(s)-V(s)\right\|^{2}_{C^{c+k+1}(D,q)}\right)
\nonumber\\
&&+\left\|\bar{V}^{\theta}(s)\right\|^{2}_{C^{c+k+1}(D,qdd)}
\left(1+\left\|\bar{V}^{i}(s)-\bar{V}(s)\right\|^{2}_{C^{c+k+1}(D,qd)}\right).
\nonumber
\end{eqnarray}
Thus, by \eq{ftheta}, \eq{firstine}, and the fact that
$|ab|\leq\frac{1}{2}(a^{2}+b^{2})$ for any two real numbers $a$ and
$b$, we have that
\begin{eqnarray}
&&\int_{0}^{T}\left\|({\cal D}_{\theta}V^{i+1}-V^{\theta},{\cal
D}_{\theta}\bar{V}^{i+1}-\bar{V}^{\theta})\right\|_{{\cal
N}_{\gamma}^{D}[\theta,T]}^{2}d\theta
\elabel{secondine}\\
&\leq&\delta
T\left\|(V^{i}-V,\bar{V}^{i}-\bar{V})\right\|^{2}_{{\cal
N}_{\gamma}^{D}[\theta,T]}+\delta
T\left\|(V,\bar{V})\right\|^{2}_{{\cal N}_{\gamma}^{D}}+
\frac{3\delta}{2}\int_{0}^{T}\left\|(V^{\theta},\bar{V}^{\theta})\right\|^{2}_{{\cal
N}_{\gamma}^{D}[\theta,T]}d\theta
\nonumber\\
&&+\frac{3\delta}{2}E\left[\int_{0}^{T}\left\|({\cal
D}_{\theta}V^{i}-V^{\theta},{\cal
D}_{\theta}\bar{V}^{i}-\bar{V}^{\theta})\right\|_{{\cal
N}_{\gamma}^{D}[\theta,T]}^{2}d\theta\right] \nonumber\\
&\leq&(\delta+...+\delta^{i})K_{1}+\delta^{i}\int_{0}^{T}\left\|({\cal
D}_{\theta}V^{0}-V^{\theta},{\cal
D}_{\theta}\bar{V}^{0}-\bar{V}^{\theta})\right\|_{{\cal
N}_{\gamma}^{D}[\theta,T]}^{2}d\theta
\nonumber\\
&\leq&\frac{\delta K_{1}}{1-\delta}+\delta^{i}K_{2}\nonumber
\end{eqnarray}
where $K_{1}$ and $K_{2}$ are some nonnegative constants. Since
$e^{2\gamma_{c}t}>1$ for all $c\in\{0,1,...\}$, we know that
\begin{eqnarray}
&&\sum_{v=0}^{\infty}\xi(v)E\left[\int_{0}^{T}\int_{0}^{T}\Lambda_{v}\left\|({\cal
D}_{\theta}V^{i+1}-V^{\theta},{\cal
D}_{\theta}\bar{V}^{i+1}-\bar{V}^{\theta})\right\|_{C^{v}(D,qd\times
qdd)}^{2}d\theta
dt\right]\elabel{madip}\\
&&\leq\frac{\delta K_{1}}{1-\delta}+\delta^{i}K_{2}\nonumber\\
&&\rightarrow 0\nonumber
\end{eqnarray}
by letting $i\rightarrow\infty$ first and $\delta\rightarrow 0$
second since $\delta\in(0,1)$. Thus, by \eq{madip} and the factor
that $e^{2\gamma_{c}t}>1$ again, we have
\begin{eqnarray}
&\||(V^{i},\bar{V}^{i})-(V,\bar{V})\||^{\infty,2}_{1,2}\rightarrow
0\;\;\mbox{as}\;\;i\rightarrow\infty.
\end{eqnarray}
Thus, we know that $(V^{i},\bar{V}^{i})$ with Malliavin derivative
$({\cal D}_{\theta}V^{i}$, ${\cal D}_{\theta}\bar{V}^{i})$ converges
to $(V,\bar{V})$ with Malliavin derivative $(V^{\theta}$,
$\bar{V}^{\theta})$ in
$L^{\infty,2}_{1,2}([0,T]\times\Omega,(C^{\infty}(D,H))^{q\times
q\times d})$ as $i\rightarrow\infty$. Hence, a version of the
following infinite-dimensional vector process
\begin{eqnarray}
&\{(D^{j}_{\theta}V^{(c)}_{i_{1}...i_{p}}(t,x),
D^{j}_{\theta}\bar{V}^{(c)}_{i_{1}...i_{p}}(t,x)):0\leq\theta,t\leq
T,c\in\{0,1,...\},j\in\{1,...,d\},(i_{1},...,i_{p})\in{\cal I}^{c}\}
\nonumber
\end{eqnarray}
is given by \eq{mbspde}.

Finally, for the considered version, the claims in \eq{maequalzero}
of Lemma~\ref{masolution} are follows from the fact that
$(V,\bar{V})$ is an adapted solution to the B-SPDE displayed in
\eq{bspdef} and Corollary 1.2.1 in page 34 and its related remark in
page 42 of Nualart~\cite{nua:malcal}. Furthermore, the claims in
\eq{equaldis} are justified as follows. Since, for $t\leq u$, we
have that
\begin{eqnarray}
V^{(c)}_{i_{1}...i_{p}}(u,x)&=&V^{(c)}_{i_{1}...i_{p}}(t,x)-\int_{t}^{u}{\cal
L}^{(c)}_{i_{1}...i_{p}}(x,V)ds \elabel{fmsigmanV}\\
&&-\int_{t}^{u}\left({\cal J}^{(c)}_{i_{1}...i_{p}}(x,V)
-\bar{V}^{(c)}_{i_{1}...i_{p}}(s,x)\right)dW(s)\nonumber
\end{eqnarray}
for all $x\in D$, $c\in\{0,1,...\}$, and $(i_{1},...,i_{p})\in{\cal
I}^{c}$. Then, it follows from Lemma~\ref{equivm} that, for
$j\in\{1,...,d\}$ and $t<\theta\leq u$,
\begin{eqnarray}
\;\;\;\;\;\;D^{j}_{\theta}V^{(c)}_{i_{1}...i_{p}}(u,x)
&=&\bar{V}^{(c),j}_{i_{1}...i_{p}}(\theta,x)-{\cal
J}^{(c),j}_{i_{1}...i_{p}}(x,V)
\elabel{fdmbspde}\\
&&-\int_{\theta}^{u}\sum_{l=0}^{k}\sum_{j_{1}+...+j_{p}=c+l}{\cal
L}^{(c+1)}_{i_{1}...i_{p},v_{j_{1}...j_{p}}}(x,V)D^{j}_{\theta}V^{(c+l)}_{j_{1}...j_{p}}(s,x)ds
\nonumber\\
&&-\int_{\theta}^{u}\sum_{l=0}^{m}\sum_{j_{1}+...+j_{p}=c+l}{\cal
L}^{(c+1)}_{i_{1}...i_{p},\bar{v}_{j_{1}...j_{p}}}(x,V)D^{j}_{\theta}\bar{V}^{(c+l)}_{j_{1}...j_{p}}(s,x)ds
\nonumber\\
&&-\int_{\theta}^{u}\sum_{l=0}^{n}\sum_{j_{1}+...+j_{p}=c+l}{\cal
J}^{(c+1)}_{i_{1}...i_{p},v_{j_{1}...j_{p}}}(x,V)D^{j}_{\theta}V^{(c+l)}_{j_{1}...j_{p}}(s,x)dW(s)
\nonumber\\
&&+\int_{\theta}^{u}D^{j}_{\theta}\bar{V}^{(c)}_{i_{1}...i_{p}}(s,x)dW(s).
\nonumber
\end{eqnarray}
Thus, by taking $\theta=u$ in \eq{fdmbspde}, we know that the claims
in \eq{equaldis} are true. Hence, we complete the proof of
Lemma~\ref{masolution}. $\Box$
\end{proof}

\subsection{Second-Order Marlliavin Derivative Based
B-SPDE}\label{secondmad}

First, we use $\theta_{1}$ to replace $\theta$ in \eq{mbspde}.
Second, for each $j\in\{1,...,d\}$, $c\in\{0,1,...\}$, and
$(i_{1},...,i_{p})\in{\cal I}^{c}$, we define
\begin{eqnarray}
\bar{{\cal
L}}^{(c+1)}_{i_{1}...i_{p}}(x,V,D^{j}_{\theta_{1}}V)&=&\sum_{l=0}^{k}\sum_{j_{1}+...+j_{p}=c+l}{\cal
L}^{(c+1)}_{i_{1}...i_{p},v_{j_{1}...j_{p}}}(x,V)D^{j}_{\theta_{1}}V^{(c+l)}_{j_{1}...j_{p}}(s,x)
\elabel{semarI}\\
&&+\sum_{l=0}^{m}\sum_{j_{1}+...+j_{p}=c+l}{\cal
L}^{(c+1)}_{i_{1}...i_{p},\bar{v}_{j_{1}...j_{p}}}(x,V)D^{j}_{\theta_{1}}\bar{V}^{(c+l)}_{j_{1}...j_{p}}(s,x),
\nonumber\\
\bar{{\cal
J}}^{(c+1)}_{i_{1}...i_{p}}(x,V,D^{j}_{\theta_{1}}V)&=&\sum_{l=0}^{n}\sum_{j_{1}+...+j_{p}=c+l}{\cal
J}^{(c+1)}_{i_{1}...i_{p},v_{j_{1}...j_{p}}}(x,V)D^{j}_{\theta_{1}}V^{(c+l)}_{j_{1}...j_{p}}(s,x).
\elabel{semarII}
\end{eqnarray}
Then, we can obtain the following system of B-SPDEs for each
$\bar{j}\in\{1,...,d\}$ by taking Malliavin derivatives on both
sides of the equation in \eq{mbspde},
\begin{eqnarray}
&&D^{\bar{j}}_{\theta_{2}}D^{j}_{\theta_{1}}V^{(c)}_{i_{1}...i_{p}}(t,x)\elabel{mbspdeII}\\
&=&D^{\bar{j}}_{\theta_{2}}D^{j}_{\theta_{1}}H^{(c)}_{i_{1}...i_{p}}(x)\nonumber\\
&&+\int_{t}^{T}\sum_{l=0}^{k}\sum_{j_{1}+...+j_{p}=c+l}\bar{{\cal
L}}^{(c+2)}_{i_{1}...i_{p},v_{j_{1}...j_{p}}}(x,V,D^{j}_{\theta_{1}}V)
D^{\bar{j}}_{\theta_{2}}V^{(c+l)}_{j_{1}...j_{p}}(s,x)ds
\nonumber\\
&&+\int_{t}^{T}\sum_{l=0}^{k}\sum_{j_{1}+...+j_{p}=c+l}\bar{{\cal
L}}^{(c+2)}_{i_{1}...i_{p},(D^{j}_{\theta_{1}}v)_{j_{1}...j_{p}}}(x,V,D^{j}_{\theta_{1}}V)
D^{\bar{j}}_{\theta_{2}}D^{j}_{\theta_{1}}V^{(c+l)}_{j_{1}...j_{p}}(s,x)ds
\nonumber\\
&&+\int_{t}^{T}\sum_{l=0}^{m}\sum_{j_{1}+...+j_{p}=c+l}\bar{{\cal
L}}^{(c+2)}_{i_{1}...i_{p},(D^{j}_{\theta_{1}}\bar{v})_{j_{1}...j_{p}}}(x,V,D^{j}_{\theta_{1}}V)
D^{\bar{j}}_{\theta_{2}}\bar{V}^{(c+l)}_{j_{1}...j_{p}}(s,x)ds
\nonumber\\
&&+\int_{t}^{T}\sum_{l=0}^{m}\sum_{j_{1}+...+j_{p}=c+l}\bar{{\cal
L}}^{(c+2)}_{i_{1}...i_{p},(D^{j}_{\theta_{1}}\bar{v})_{j_{1}...j_{p}}}(x,V,D^{j}_{\theta_{1}}V)
D^{\bar{j}}_{\theta_{2}}D^{j}_{\theta_{1}}\bar{V}^{(c+l)}_{j_{1}...j_{p}}(s,x)ds
\nonumber\\
&&+\int_{t}^{T}\sum_{l=0}^{n}\sum_{j_{1}+...+j_{p}=c+l}\bar{{\cal
J}}^{(c+2)}_{i_{1}...i_{p},(D^{j}_{\theta_{1}}v)_{j_{1}...j_{p}}}(x,V,D^{j}_{\theta_{1}}V)
D^{\bar{j}}_{\theta_{2}}V^{(c+l)}_{j_{1}...j_{p}}(s,x)dW(s)
\nonumber\\
&&+\int_{t}^{T}\sum_{l=0}^{n}\sum_{j_{1}+...+j_{p}=c+l}\bar{{\cal
J}}^{(c+2)}_{i_{1}...i_{p},(D^{j}_{\theta_{1}}v)_{j_{1}...j_{p}}}(x,V,D^{j}_{\theta_{1}}V)
D^{\bar{j}}_{\theta_{2}}D^{j}_{\theta_{1}}V^{(c+l)}_{j_{1}...j_{p}}(s,x)dW(s)
\nonumber\\
&&-\int_{t}^{T}D^{\bar{j}}_{\theta_{2}}D^{j}_{\theta_{1}}\bar{V}^{(c)}_{i_{1}...i_{p}}(s,x)dW(s).
\nonumber
\end{eqnarray}
Furthermore, consider a properly chosen number sequence
$\gamma=\{\gamma_{c},c=0,1,2,...\}$ satisfying
$0<\gamma_{0}<\gamma_{1}<....$ such that the discussions for Theorem
2.1, Subsections~\ref{malliavinterm}-\ref{firstorderM}, and the
following Lemma~\ref{lemmasm} are meaningful, which can be
elaborated similar to the previous proof in
Subsection~\ref{malliavinterm}. Then, we can define the space
\begin{eqnarray}
&&\;\;\;{\cal O}^{D}_{\gamma}[0,T]=D^{2}_{{\cal
F}}([0,T],C^{\infty}(D,R^{q\times d\times d}))\times L^{2}_{{\cal
F},p}([0,T],C^{\infty}(D,R^{q\times d\times d\times d}))
\elabel{mmcombanach}
\end{eqnarray}
endowed with the norm similarly defined as in
\eq{comnorm}-\eq{kcomnorm}. Thus, we have the following lemma.
\begin{lemma}\label{lemmasm}
Under conditions as required in Theorem~\ref{errorIth} and with
Remark~\ref{firstremark}, if $(V(t,x),\bar{V}(t,x))$ $\in$ ${\cal
Q}_{{\cal F}}^{2}([0,T]\times D)$ is the adapted solution to
\eq{bspdef}, then,
\begin{eqnarray}
&(V(t,x),\bar{V}(t,x))\in
L^{\infty,2}_{2,2}([0,T]\times\Omega,(C^{\infty}(D,H))^{q\times
q\times d}). \nonumber
\end{eqnarray}
Furthermore, for $x\in D$, a version of the following
infinite-dimensional vector process
\begin{eqnarray}
\;\;\left\{(D_{\theta_{2}}D_{\theta_{1}}V^{(c)}_{i_{1}...i_{p}}(t,x),
D_{\theta_{2}}D_{\theta_{1}}\bar{V}^{(c)}_{i_{1}...i_{p}}(t,x)):
0\leq\theta_{1},\theta_{2},t\leq
T,c\in\{0,1,...\},\;\;(i_{1},...,i_{p})\in{\cal I}^{c}\right\}
\nonumber
\end{eqnarray}
is given by the system in \eq{mbspdeII}. In addition, for $0\leq
t<\theta_{1}\wedge\theta_{2}\leq T$ and $1\leq\bar{j},j\leq d$,
\begin{eqnarray}
&D^{\bar{j}}_{\theta_{2}}D^{j}_{\theta_{1}}V^{(c)}_{i_{1}...i_{p}}(t,x)=0,
\;\;\;D^{\bar{j}}_{\theta_{2}}D^{j}_{\theta_{1}}\bar{V}^{(c)}_{i_{1}...i_{p}}(t,x)=0,
\elabel{maequalzeroI}
\end{eqnarray}
and
\begin{eqnarray}
&&\left\{D_{t}\bar{V}^{(c)}_{i_{1}...i_{p}}(t,x),t\in[0,T],c\in\{0,1,...\},(i_{1},...,i_{p})\in{\cal
I}^{c},
x\in D\right\}\elabel{equaldisII}\\
&&=^{d}\{D_{t}D_{t}V^{(c)}_{i_{1}...i_{p}}(t,x)+\sum_{l=0}^{n}\sum_{j_{1}+...+j_{p}=c+l}{\cal
J}^{(c+1)}_{i_{1}...i_{p},v_{j_{1}...j_{p}}}(x,V)D_{t}V^{(c+l)}_{j_{1}...j_{p}}(t,x),
\nonumber\\
&&\;\;\;\;\;\;\;\;t\in[0,T],c\in\{0,1,...\},(i_{1},...,i_{p})\in{\cal
I}^{c},x\in D\}. \nonumber
\end{eqnarray}
\end{lemma}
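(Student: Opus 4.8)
The plan is to follow closely the route used for \Lemma{masolution}, promoting each step from first-order to second-order Malliavin differentiability. By \Theorem{bsdeyI} and \Lemma{masolution} we already have the unique adapted solution $(V,\bar V)\in{\cal Q}^2_{\cal F}([0,T]\times D)$, its membership in $L^{\infty,2}_{1,2}$, and the first-order derivatives $(D_{\theta_1}V,D_{\theta_1}\bar V)$ solving the linear random-environment system \eq{mbspde}. I would again work with the Picard iterates $(V^i,\bar V^i)$ of \eq{minitialV}-\eq{msigmanV}, which converge to $(V,\bar V)$ in $L^{\infty,2}_{1,2}$, and show by induction on $i$ that each iterate lies in $L^{\infty,2}_{2,2}$. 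Since $H\in{\cal D}^{2,2}_\infty$ by \eq{termI} and \eq{termIII}, and since $V^{i+1}$ is a conditional expectation of $H+\int_t^T{\cal L}(x,V^i)\,ds$, two applications of \Lemma{chocone} and \Lemma{equivm} together with the chain rule for Malliavin calculus give $V^{i+1},\bar V^{i+1}\in{\cal D}^{2,2}_\infty$ and the recursion for $D^{\bar j}_{\theta_2}D^{j}_{\theta_1}V^{(c),i+1}_{i_1\dots i_p}$ obtained by differentiating \eq{dmbspde} once more, i.e. the iterate version of \eq{mbspdeII} with coefficients $\bar{\cal L}^{(c+2)},\bar{\cal J}^{(c+2)}$ as defined in \eq{semarI}-\eq{semarII}.

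Next I would establish the analogue of \Lemma{limitadapted} for the limiting system \eq{mbspdeII}. The crucial structural point is that \eq{mbspdeII} is \emph{linear} in the unknown $D_{\theta_2}D_{\theta_1}V$, with random coefficients built from $V$ and the already-known first-order derivative $D_{\theta_1}V$; the inhomogeneous forcing terms are products of these coefficients with $D_{\theta_2}V$ and $D_{\theta_2}\bar V$, controlled by \Lemma{masolution}. As in \Lemma{limitadapted}, I would localize by the stopping times $\tau_w$ of \eq{stoptime} to obtain, over each random interval $[\theta_1\vee\theta_2,\tau_w]$, a unique adapted solution through the same It\^o-formula plus contraction estimate used for \eq{vtheta}-\eq{vvbar}, and then pass to the limit $w\to\infty$ via a Cauchy argument in the weighted space ${\cal O}^{D}_{\gamma}[0,T]$ of \eq{mmcombanach}, using the martingale convergence theorem on the terminal datum $D_{\theta_2}D_{\theta_1}H$ supplied by \eq{termIII}.

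Convergence of the iterates' second derivatives to this solution is then the second-order counterpart of \eq{secondine}: applying the It\^o formula to the discrepancy and invoking \eq{blipschitz}-\eq{blipschitzI}, one obtains a bound of the form $\delta K_1/(1-\delta)+\delta^i K_2$ for the integrated ${\cal O}^{D}_{\gamma}$-norm of the difference between $D_{\theta_2}D_{\theta_1}V^{i+1}$ and the candidate limit, whence convergence as $i\to\infty$ and $\delta\to0$, and membership in $L^{\infty,2}_{2,2}$ after using $e^{2\gamma_c t}>1$ exactly as in the passage to \eq{madip}. I expect the main obstacle to be precisely here: differentiating the nonlinear operators twice produces coefficients $\bar{\cal L}^{(c+2)},\bar{\cal J}^{(c+2)}$ that are \emph{quadratic} in the first-order Malliavin derivatives, so the error-control functions analogous to $\alpha^i,\beta^i$ now carry cross terms such as $\|D_{\theta_1}V^i\|\,\|D_{\theta_2}D_{\theta_1}V^i\|$ weighted by $\|V^i\|_{C^{c+\cdots}}$. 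Keeping these integrable as $c\to\infty$ forces more delicate bookkeeping of the differentiation orders and a correspondingly more aggressive choice of the weights $\xi(c)$ and the sequence $\gamma_0<\gamma_1<\cdots$, so that a condition of type \eq{hatgma}, now shifted to absorb the extra derivative, still pins the contraction constant below $2/3$.

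Finally, the vanishing relations \eq{maequalzeroI} for $t<\theta_1\wedge\theta_2$ follow from the adaptedness of $(V,\bar V)$ exactly as \eq{maequalzero} did in \Lemma{masolution}, via Corollary 1.2.1 of Nualart~\cite{nua:malcal}; and the distributional identity \eq{equaldisII} is obtained by applying $D^{\bar j}_{\theta_2}$ to the first-order relation \eq{fdmbspde}, using \Lemma{equivm} to generate the jump term $\sum_{l}\sum_{j_1+\cdots+j_p=c+l}{\cal J}^{(c+1)}_{i_1\dots i_p,v_{j_1\dots j_p}}(x,V)D_tV$ from the differentiated diffusion coefficient, and then setting $\theta_2=\theta_1=u=t$, in direct parallel with the derivation of \eq{equaldis}.
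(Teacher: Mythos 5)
Your overall architecture---Picard iterates, an analogue of Lemma~\ref{limitadapted} for a localized version of the second-order system, a contraction/convergence estimate in ${\cal O}^{D}_{\gamma}[0,T]$ of \eq{mmcombanach}, and the derivation of \eq{maequalzeroI} and \eq{equaldisII} by the same devices as in Lemma~\ref{masolution}---is exactly what the paper invokes when it says the proof follows ``by the similar arguments as used in the proofs of Lemmas~\ref{limitadapted}--\ref{masolution}''. But there is one concrete gap, and it sits precisely at the single point where the paper's proof adds something new. You localize with the stopping times $\tau_{w}$ of \eq{stoptime}, which control only $\|V(t)\|_{C^{\infty}(D,q)}+\|\bar{V}(t)\|_{C^{\infty}(D,qd)}$. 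That is not enough for the second-order system \eq{mbspdeII}: its coefficients $\bar{{\cal L}}^{(c+2)}$, $\bar{{\cal J}}^{(c+2)}$ from \eq{semarI}--\eq{semarII} and its forcing terms contain the first-order Malliavin derivatives $D_{\theta_{1}}V$, $D_{\theta_{1}}\bar{V}$, $D_{\theta_{2}}V$, $D_{\theta_{2}}\bar{V}$ as factors, and none of these is bounded on $[0,\tau_{w}]$ under \eq{stoptime}. The localized contraction argument (It\^{o} formula plus a choice of $\gamma_{c}$, $\hat{\gamma}_{c}$ as in \eq{hatgma}) requires the generalized Lipschitz and growth conditions to hold on the random interval with a deterministic constant depending on $w$; with your localization the effective Lipschitz constant of the system in the unknown $D_{\theta_{2}}D_{\theta_{1}}V$ is an unbounded random quantity. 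Your proposed remedy---a ``more aggressive'' choice of $\xi(c)$ and of $\gamma_{0}<\gamma_{1}<\cdots$---cannot repair this: those are deterministic weights indexed by the differentiation order $c$ (and $\xi$ is in any case fixed once and for all by the definition of the norms \eq{inftynorm} and \eq{finfty}), whereas the failure is an $\omega$-wise unboundedness of the coefficients that is present already at each fixed $c$.

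The paper's fix is to enlarge the localization functional: it sets $L(t)\equiv\|V(t)\|_{C^{\infty}(D,q)}+\|\bar{V}(t)\|_{C^{\infty}(D,qd)}+\|D_{\theta_{1}}V(t)\|_{C^{\infty}(D,qd)}+\|D_{\theta_{1}}\bar{V}(t)\|_{C^{\infty}(D,qdd)}+\|D_{\theta_{2}}V(t)\|_{C^{\infty}(D,qd)}+\|D_{\theta_{2}}\bar{V}(t)\|_{C^{\infty}(D,qdd)}$ and defines the new stopping times \eq{stoptimeI} by $\tau_{w}\equiv T\wedge\inf\{t>0,\,L(t)>w\}$. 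Lemma~\ref{masolution} guarantees that the first-order derivative processes entering $L(t)$ exist and are square integrable, so $\tau_{w}\rightarrow T$ a.s.\ still holds, and on $[0,\tau_{w}]$ all coefficients and forcing terms of \eq{mbspdeII} are bounded by $w$, so the localized system again satisfies conditions of the type \eq{blipschitz}--\eq{blipschitzI}. With this one correction, the remaining steps of your outline (the analogue of Lemma~\ref{limitadapted}, the Cauchy argument in ${\cal O}^{D}_{\gamma}$ with the martingale convergence theorem applied to the terminal datum $D_{\theta_{2}}D_{\theta_{1}}H$ supplied by \eq{termIII}, the second-order counterpart of \eq{secondine}, and your treatment of \eq{maequalzeroI} and \eq{equaldisII}) go through as you describe and coincide with the paper's intended proof.
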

\begin{proof}
Let
\begin{eqnarray}
L(t)&\equiv&\|V(t)\|_{C^{\infty}(D,q)}
+\left\|\bar{V}(t)\right\|_{C^{\infty}(D,qd)}\nonumber\\
&&+\left\|D_{\theta_{1}}V(t)\right\|_{C^{\infty}(D,qd)}
+\left\|D_{\theta_{1}}\bar{V}(t)\right\|_{C^{\infty}(D,qdd)}
\nonumber\\
&&+\left\|D_{\theta_{2}}V(t)\right\|_{C^{\infty}(D,qd)}
+\left\|D_{\theta_{2}}\bar{V}(t)\right\|_{C^{\infty}(D,qdd)}.
\nonumber
\end{eqnarray}
Then, similar to \eq{stoptime}, we define a sequence of
nondecreasing $\{{\cal F}_{t}\}$-stopping times along
$w\in\{0,1,...\}$ as follows,
\begin{eqnarray}
&&\tau_{w}\equiv T\wedge\inf\left\{t>0,L(t)>w\right\},
\elabel{stoptimeI}
\end{eqnarray}
which satisfies $\tau_{w}\rightarrow T$ a.s. as
$w\rightarrow\infty$. Thus, by the similar arguments as used in the
proofs of Lemmas~\ref{limitadapted}-\ref{masolution}, we can provide
a proof for Lemma~\ref{lemmasm}. $\Box$
\end{proof}

\subsection{Priori Estimates}\label{prioriI}

\begin{lemma}\label{prioriest}
Under conditions as required in Theorem~\ref{errorIth} and with
Remark~\ref{firstremark}, if $(V^{i}(t,x)$, $\bar{V}^{i}(t,x))$ for
each $i\in\{1,2\}$ is the unique adapted solution to equation
\eq{bspdef} with terminal condition $H^{i}(x)$, then,
\begin{eqnarray}
\left\|(V^{i},\bar{V}^{i})\right\|^{2}_{{\cal
M}_{\gamma}^{D}[0,T]}&\leq&
\bar{C}\left(1+\left\|H^{i}\right\|^{2}_{L^{2}_{{\cal
F}_{T}}(\Omega,C^{\infty}(D,R^{q}))}\right),
\elabel{vbarvine}\\
\left\|(V^{2},\bar{V}^{2})-(V^{1},\bar{V}^{1})\right\|^{2}_{{\cal
M}_{\gamma}^{D}[s,t]}&\leq&
\bar{C}\left\|H^{2}-H^{1}\right\|^{2}_{L^{2}_{{\cal
F}_{T}}(\Omega,C^{\infty}(C,R^{q}))} \elabel{vbarvineII}
\end{eqnarray}
for some nonnegative constant $\bar{C}$ only depending on the
terminal time $T$, the region $D$. Furthermore, for each
$c\in\{0,1,...\}$ and any $s,t\in[0,T]$ with $s\leq t$, we have
\begin{eqnarray}
E\left[\left\|V^{i}(t)-V^{i}(s)\right\|^{2}_{C^{c}(D,q)}\right]
&\leq& C(t-s), \elabel{vbarvineI}\\
E\left[\left\|\bar{V}^{i}(t)-\bar{V}^{i}(s)\right\|^{2}_{C^{c}(D,qd)}\right]
&\leq&C(t-s).\elabel{barvbarvineI}
\end{eqnarray}
for some nonnegative constant $C$ only depending on the terminal
time $T$, the region $D$, and the terminal random variable.
\end{lemma}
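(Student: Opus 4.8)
The plan is to obtain the four estimates by re-using the weighted, level-by-level It\^o-energy machinery already developed for \Theorem{bsdeyI}, and then to convert the regularity of $\bar{V}$ into a statement about Malliavin derivatives via \Lemma{masolution}. For the growth bound \eq{vbarvine}, I would fix the spatial differentiation order $c$ and apply the It\^o formula to $\mbox{Tr}(V^{i,(c)}(t,x))e^{2\gamma_{c}t}$, exactly as in the derivation of \eq{pvitod} and \eq{eupvitod}. Using the generalized linear growth conditions \eq{blipschitzoI}--\eq{blipschitzI} in place of the Lipschitz conditions, together with the Burkholder--Davis--Gundy inequality (as in \eq{vitodII}) to dominate the supremum of the martingale part, one gets a level-$c$ inequality of the form $\|(V^{i},\bar{V}^{i})\|^{2}_{{\cal M}^{D}_{\gamma_{c},c}}\leq\bar{C}_{c}(1+\|H^{i,(c)}\|^{2})$. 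Multiplying by $\xi(c)$, summing over $c$, and choosing the sequence $\gamma_{0}<\gamma_{1}<\cdots$ so that the cross-level coupling is absorbed (the same device used for \eq{clastine}--\eq{infinityine}) yields \eq{vbarvine} with a constant depending only on $T$ and $D$.

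For the stability estimate \eq{vbarvineII}, I would set $\Delta V=V^{2}-V^{1}$ and $\Delta\bar{V}=\bar{V}^{2}-\bar{V}^{1}$. These solve \eq{bspdef} with terminal datum $H^{2}-H^{1}$ and driver equal to the difference of the drivers, so the generalized Lipschitz conditions \eq{blipschitz}--\eq{blipschitzo} apply directly to the increments. Running the identical It\^o-energy computation (now in the Lipschitz form, exactly as in \eq{eupvitod}--\eq{lastine}) gives the level-$c$ bound $\|(\Delta V,\Delta\bar{V})\|^{2}_{{\cal M}^{D}_{\gamma_{c},c}[s,t]}\leq\bar{C}\|H^{2}-H^{1}\|^{2}$, and summing against $\xi(c)$ produces \eq{vbarvineII}. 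For the temporal regularity \eq{vbarvineI} of $V$ I would use the forward form \eq{sigmanVI}: for $s\leq t$,
\[
V^{(c)}(t,x)-V^{(c)}(s,x)=-\int_{s}^{t}{\cal L}^{(c)}(r,x,V)dr-\int_{s}^{t}\left({\cal J}^{(c)}(r,x,V)-\bar{V}^{(c)}(r,x)\right)dW(r).
\]
Taking $E[\|\cdot\|^{2}_{C^{c}(D,q)}]$, bounding the Lebesgue integral by Cauchy--Schwarz and the stochastic integral by the It\^o isometry, and controlling the integrands through the linear growth conditions \eq{blipschitzoI}--\eq{blipschitzI} together with the already-established bound \eq{vbarvine}, gives $E[\|V^{i}(t)-V^{i}(s)\|^{2}_{C^{c}(D,q)}]\leq C(t-s)$.

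The hard part will be the temporal regularity \eq{barvbarvineI} of $\bar{V}$, since $\bar{V}$ arises from the martingale representation and cannot be differenced directly. Instead I would invoke the distributional identity \eq{equaldis} of \Lemma{masolution}, writing $\bar{V}^{(c)}(t,x)=^{d}D_{t}V^{(c)}(t,x)+{\cal J}^{(c)}(x,V)$. The ${\cal J}$ contribution is handled by \eq{blipschitzo} and the $V$-regularity \eq{vbarvineI} just obtained. The remaining diagonal increment $D_{t}V^{(c)}(t,x)-D_{s}V^{(c)}(s,x)$ I would split as $[D_{s}V^{(c)}(t,x)-D_{s}V^{(c)}(s,x)]+[D_{t}V^{(c)}(t,x)-D_{s}V^{(c)}(t,x)]$. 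The first bracket is a genuine time increment of the process $D_{\theta}V$, which solves the linear B-SPDE \eq{mbspde}, so it is bounded by $C(t-s)$ by the same argument as for \eq{vbarvineI} using the a priori bound \eq{ftheta} of \Lemma{limitadapted}. The second bracket is an increment in the Malliavin base point $\theta$, which is exactly what the second-order Malliavin derivative controls: estimating it through the system \eq{mbspdeII} and the a priori bounds supplied by \Lemma{lemmasm} yields the required $C(t-s)$ rate. Combining the two brackets with the ${\cal J}$ term gives \eq{barvbarvineI} and completes the proof.
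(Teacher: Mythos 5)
Your treatment of \eq{vbarvine} and \eq{vbarvineII} coincides with the paper's (the paper simply invokes the It\^o formula and the weighted, level-by-level energy argument used for \eq{ftheta} and \eq{clastine}), and your use of \eq{equaldis} to begin the proof of \eq{barvbarvineI} is also the paper's first step. However, there are two genuine gaps in the remaining parts.

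First, your proof of \eq{vbarvineI} does not close. After the It\^o isometry the stochastic-integral term is $\int_{s}^{t}E\left[\left\|{\cal J}^{(c)}(r,x,V)-\bar{V}^{(c)}(r,x)\right\|^{2}\right]dr$. Linear growth plus \eq{vbarvine} controls the ${\cal J}$ part, because the norm \eq{kcomnorm} carries $E\left[\sup_{0\leq t\leq T}\|V(t)\|^{2}_{C^{c}(D,q)}e^{2\gamma_{c}t}\right]$; but for $\bar{V}$ that norm carries only the time-integrated quantity $E\left[\int_{0}^{T}\|\bar{V}(t)\|^{2}_{C^{c}(D,qd)}e^{2\gamma_{c}t}dt\right]$. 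Integrability of $r\mapsto E\|\bar{V}(r)\|^{2}$ gives $\int_{s}^{t}E\|\bar{V}(r)\|^{2}dr\rightarrow 0$, but not the rate $O(t-s)$. This is exactly why the paper proves \eq{vbarvineI} by substituting \eq{equaldis}, i.e. ${\cal J}^{(c)}-\bar{V}^{(c)}=^{d}-D_{r}V^{(c)}(r,x)$, and then invoking the a priori bounds \eq{conpi} and \eq{vvbar} on the Malliavin-derivative B-SPDE, whose ${\cal N}$-norm does carry a sup-in-time component for the $V^{\theta}$ part; this is why the term $\|D_{r}V^{i}(r)\|^{2}_{C^{k+c}(D,qd)}$ appears in the first inequality of \eq{dvproof}. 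Your proposal omits this ingredient, which is the main reason the paper develops Lemmas~\ref{limitadapted}--\ref{masolution} at all.

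Second, in \eq{barvbarvineI} your splitting $D_{t}V(t)-D_{s}V(s)=[D_{s}V(t)-D_{s}V(s)]+[D_{t}V(t)-D_{s}V(t)]$ introduces a base-point increment that the second-order Malliavin derivative cannot control: $D_{\theta_{2}}D_{\theta_{1}}$ differentiates in the direction of $\omega$, not in the parameter $\theta_{1}$, so there is no fundamental-theorem-of-calculus identity expressing $D_{t}V(u)-D_{s}V(u)$ through $D_{\theta}D_{\theta}V$. Since $D_{t}V(\cdot)-D_{s}V(\cdot)$ solves the linear system \eq{mbspde} with terminal datum $D_{t}H-D_{s}H$, your second bracket would require an $L^{2}$-modulus of continuity $E\|D_{t}H-D_{s}H\|^{2}\leq C(t-s)$ of the terminal condition in its base point, which is not implied by \eq{termI}--\eq{termIII}. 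The paper avoids base-point increments altogether: it bounds $E\|D_{t}V(t)-D_{s}V(s)\|^{2}$ through the diagonal identities \eq{equaldis} and \eq{equaldisII}, reducing everything to $C\int_{s}^{t}\left(1+\|V(r)\|^{2}_{C^{k+c}}+\|D_{r}V(r)\|^{2}_{C^{k+c}}+\|D_{r}D_{r}V(r)\|^{2}_{C^{k+c}}\right)dr$ and then to the ${\cal M}$, ${\cal N}$, ${\cal O}$ a priori bounds. Note also that your first bracket re-encounters the first gap: the stochastic integral for $D_{s}V(t)-D_{s}V(s)$ contains $D_{s}\bar{V}$, which again is only integrably bounded (both in \eq{ftheta} and in the ${\cal N}$-norm), so an \eq{equaldisII}-type substitution converting it into $D_{r}D_{s}V(r)$ is needed there as well.
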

\begin{proof}
By applying the It$\hat{o}$'s formula and the similar proof as used
for \eq{ftheta}, we know that the claims in
\eq{vbarvine}-\eq{vbarvineII} are true. Now, consider the B-SPDE
\eq{bspdef} over $[s,t]$ with terminal condition $V(t,x)$. Then, by
\eq{blipschitzoI}-\eq{blipschitzI}, \eq{equaldis}, \eq{conpi} and
\eq{vvbar}, we know that
\begin{eqnarray}
&&E\left[\left\|V^{i}(t)-V^{i}(s)\right\|^{2}_{C^{c}(D,q)}\right]
\elabel{dvproof}\\
&\leq&C_{1}\int_{s}^{t}\left(1+\left\|V^{i}(r)\right\|^{2}_{C^{k+c}(D,q)}
+\left\|D_{r}V^{i}(r)\right\|^{2}_{C^{k+c}(D,qd)}\right)dr
\nonumber\\
&\leq&C_{2}(t-s)\left(\left\|(V^{i},\bar{V}^{i})\right\|^{2}_{{\cal
M}_{\gamma}^{D}[0,T]}+\left\|(DV^{i},D\bar{V}^{i})\right\|^{2}_{{\cal
N}_{\gamma}^{D}[0,T]}\right)
\nonumber\\
&\leq&C_{3}(t-s), \nonumber
\end{eqnarray}
where $C_{1}$ and $C_{2}$ are some nonnegative constants.

Furthermore, by using \eq{equaldis}, \eq{equaldisII} in
Lemma~\ref{lemmasm}, and the similar argument as used in
\eq{dvproof}, we know that
\begin{eqnarray}
&&E\left[\left\|\bar{V}^{i}(t)-\bar{V}^{i}(s)\right\|^{2}_{C^{c}(D,qd)}\right]
\nonumber\\
&\leq&C_{3}E\left[\left\|V^{i}(t)-V^{i}(s)\right\|^{2}_{C^{c}(D,q)}
+\left\|D_{t}V^{i}(t)-D_{s}V^{i}(s)\right\|^{2}_{C^{c}(D,qd)}\right]
\nonumber\\
&\leq&C_{4}\int_{s}^{t}\left(1+\left\|V^{i}(r)\right\|^{2}_{C^{k+c}(D,q)}
+\left\|D_{r}V^{i}(r)\right\|^{2}_{C^{k+c}(D,qd)}
+\left\|D_{r}D_{r}V^{i}(r)\right\|^{2}_{C^{k+c}(D,qdd)}\right)dr
\nonumber\\
&\leq&C_{5}(t-s)\left(\left\|(V^{i},\bar{V}^{i})\right\|^{2}_{{\cal
M}_{\gamma}^{D}[0,T]}+\left\|(DV^{i},D\bar{V}^{i})\right\|^{2}_{{\cal
N}_{\gamma}^{D}[0,T]}+\left\|(DDV^{i},DD\bar{V}^{i})\right\|^{2}_{{\cal
O}_{\gamma}^{D}[0,T]}\right)
\nonumber\\
&\leq&C_{6}(t-s), \nonumber
\end{eqnarray}
where $C_{4}$-$C_{6}$ are some nonnegative constants. Finally, take
$C=\max\{C_{3},C_{6}\}$ sure that both \eq{vbarvineI} and
\eq{barvbarvineI} are true. Hence, we complete the proof of
Lemma~\ref{prioriest}. $\Box$
\end{proof}


\subsection{Representation Formulas}\label{repres}

Concerning Algorithm~\ref{algorithmI}, we first define the following
quantities as $j_{0}$ decreases from $n_{0}$ to 1 for each
$c\in\{0,1,...,M\}$ and $x\in D$,
\begin{eqnarray}
V^{(c)}_{i_{1}...i_{p},\pi,0}(t_{n_{0}},x)&\equiv&H^{(c)}_{i_{1}...i_{p}}(x),
\;\;
\bar{V}^{(c)}_{i_{1}...i_{p},\pi,0}(t_{n_{0}},x)=0,\elabel{cIalgorithmI}\\
\;\;\;\;\;V^{(c)}_{i_{1}...i_{p},\pi,0}(t_{j_{0}-1},x)
&\equiv&E\left[\left.V^{(c)}_{i_{1}...i_{p}}(t_{j_{0}},x) +{\cal
L}^{(c)}_{i_{1}...i_{p}}(t_{j_{0}},x,V(t_{j_{0}},x))
\Delta^{\pi}_{j_{0}}\right|{\cal F}_{t_{j_{0}-1}}\right],
\elabel{cIIalgorithmI}\\
\;\;\;\;\;\bar{V}^{(c)}_{i_{1}...i_{p},\pi,0}(t_{j_{0}-1})
&\equiv&\frac{1}{\Delta^{\pi}_{j_{0}}}
E\left[\left.V^{(c)}_{i_{1}...i_{p}}(t_{j_{0}},x)\Delta^{\pi}W_{j_{0}}\right|{\cal
F}_{t_{j_{0}-1}}\right] \elabel{cIIIalgorithmI}\\
&&+E\left[\left.{\cal
L}^{(c)}_{i_{1}...i_{p}}(t_{j_{0}},x,V(t_{j_{0}},x))
\Delta^{\pi}W_{j_{0}}\right|{\cal
F}_{t_{j_{0}-1}}\right]\nonumber\\
&&+{\cal
J}^{(c)}_{i_{1}...i_{p}}(t_{j_{0}-1},x,V_{\pi}(t_{j_{0}-1},x)).
\nonumber
\end{eqnarray}
Then, we consider the following iterative procedure,
\begin{eqnarray}
V_{i_{1}...i_{p},\pi,1}^{(c)}(t,x)&=&H^{(c)}_{i_{1}...i_{p},\pi,1}(t_{j_{0}},x)
\elabel{IIalgorithmIm}\\
&&+\int_{t}^{t_{j_{0}}}\left({\cal
J}_{i_{1}...i_{p}}^{(c)}(s,x,V_{\pi,1}(s,x))
-\bar{V}^{(c)}_{i_{1}...i_{p},\pi,1}(s,x)\right)dW(s) \nonumber
\end{eqnarray}
for each $t\in[t_{j_{0}-1},t_{j_{0}})$ and $x\in D$, where
\begin{eqnarray}
\;\;H^{(c)}_{i_{1}...i_{p},\pi,1}(t_{j_{0}},x)&=&V_{i_{1}...i_{p}}(t_{j_{0}},x)
+{\cal L}^{(c)}_{i_{1}...i_{p}}(t_{j_{0}},x,V(t_{j_{0}},x))
\Delta^{\pi}_{j_{0}}, \elabel{initialHJO}\\
H^{(c)}_{i_{1}...i_{p},\pi,1}(t_{n_{0}},x)&=&H^{(c)}_{i_{1}...i_{p}}(x).
\elabel{initialHJOI}
\end{eqnarray}
Note that the equation displayed in \eq{IIalgorithmIm} for each
$j_{0}\in\{n_{0},n_{0}-1,...,1\}$ is a B-SPDE with terminal value
$H^{(c)}_{i_{1},...,i_{p},\pi,1}(t_{j_{0}},x)$. Then, we have the
following lemma.
\begin{lemma}\label{clarklemma}
Under conditions \eq{termI} and \eq{blipschitz}-\eq{blipschitzI},
there is a unique adapted and square-integrable solution
$(V^{(c)}_{i_{1}...i_{p},\pi,1}(t,x),\bar{V}^{(c)}_{i_{1}...i_{p},\pi,1}(t,x))$
to the B-SPDE in \eq{IIalgorithmIm} over
$t\in[t_{j_{0}-1},t_{j_{0}})$ and $x\in D$ for each
$c\in\{0,1,...,M\}$ and $(i_{1},...,i_{p})\in{\cal I}^{c}$.
Moreover, we have
\begin{eqnarray}
&&V^{(c)}_{i_{1}...i_{p},\pi,0}(t,x)=V^{(c)}_{i_{1}...i_{p},\pi,1}(t,x),\;\;
t\in[t_{j_{0}-1},t_{j_{0}}),\;x\in D,
\elabel{equalI}\\
&&\bar{V}^{(c)}_{i_{1}...i_{p},\pi,0}(t_{j_{0}-1},x)
\elabel{equalII}\\
&=&\frac{1}{\Delta^{\pi}_{j_{0}}}
E\left[\left.\int_{t_{j_{0}-1}}^{t_{j_{0}}}\left(\bar{V}^{(c)}_{i_{1}...i_{p},\pi,1}(s,x)
-{\cal J}^{(c)}_{i_{1}...i_{p}}(s,x,V_{\pi,1}(s,x))\right)ds
\right|{\cal F}_{t_{j_{0}-1}}\right]\nonumber\\
&&+{\cal
J}^{(c)}_{i_{1}...i_{p}}(t_{j_{0}-1},x,V_{\pi,1}(t_{j_{0}-1},x)).
\nonumber
\end{eqnarray}
\end{lemma}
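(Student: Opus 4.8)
The plan is to treat \eq{IIalgorithmIm}, for each fixed $j_0$, as an instance of the unified B-SPDE \eq{bspdef} posed on the short cell $[t_{j_0-1},t_{j_0})$ with vanishing drift operator (${\cal L}\equiv 0$ on the cell), diffusion operator ${\cal J}$, and terminal datum $H^{(c)}_{i_1...i_p,\pi,1}(t_{j_0},x)$ prescribed at $t_{j_0}$. To invoke Theorem~\ref{bsdeyI} (equivalently Lemma~\ref{differentiableVc}) I first check that this terminal datum lies in $L^2_{{\cal F}_{t_{j_0}}}(\Omega,C^\infty(D,R^q))$: by \eq{initialHJO} it equals $V^{(c)}_{i_1...i_p}(t_{j_0},x)+{\cal L}^{(c)}_{i_1...i_p}(t_{j_0},x,V(t_{j_0},x))\Delta^\pi_{j_0}$, where the first summand is square integrable because $(V,\bar V)$ is the adapted solution supplied by Theorem~\ref{bsdeyI} and Lemma~\ref{prioriest}, while the second is controlled by the generalized linear growth bound \eq{blipschitzoI}. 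Theorem~\ref{bsdeyI} then yields a unique adapted, square-integrable pair $(V^{(c)}_{\pi,1},\bar V^{(c)}_{\pi,1})$ solving \eq{IIalgorithmIm}, which settles the first assertion.

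For \eq{equalI} I would exploit that \eq{IIalgorithmIm} carries no $ds$-term. Taking $E[\,\cdot\mid{\cal F}_t]$ in \eq{IIalgorithmIm} annihilates the stochastic integral, so the adapted solution is the martingale $V^{(c)}_{\pi,1}(t,x)=E[H^{(c)}_{i_1...i_p,\pi,1}(t_{j_0},x)\mid{\cal F}_t]$ for every $t\in[t_{j_0-1},t_{j_0})$. Specializing to $t=t_{j_0-1}$ and inserting $H^{(c)}_{\pi,1}(t_{j_0},x)=V^{(c)}(t_{j_0},x)+{\cal L}^{(c)}(t_{j_0},x,V(t_{j_0},x))\Delta^\pi_{j_0}$ reproduces exactly the defining conditional expectation \eq{cIIalgorithmI} of $V^{(c)}_{\pi,0}(t_{j_0-1},x)$; hence $V^{(c)}_{\pi,1}=V^{(c)}_{\pi,0}$ on the cell, which is \eq{equalI}.

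The identity \eq{equalII} is the computational heart. I would first rewrite the two conditional-expectation terms of \eq{cIIIalgorithmI} as the single expression $\frac{1}{\Delta^\pi_{j_0}}E[H^{(c)}_{\pi,1}(t_{j_0},x)\,\Delta^\pi W_{j_0}\mid{\cal F}_{t_{j_0-1}}]$, which is legitimate precisely because the factor $\Delta^\pi_{j_0}$ multiplying ${\cal L}^{(c)}$ inside $H_{\pi,1}$ cancels the prefactor $1/\Delta^\pi_{j_0}$ on that summand. Next I solve \eq{IIalgorithmIm} at $t=t_{j_0-1}$ to get $H^{(c)}_{\pi,1}(t_{j_0},x)=V^{(c)}_{\pi,1}(t_{j_0-1},x)+\int_{t_{j_0-1}}^{t_{j_0}}(\bar V^{(c)}_{\pi,1}(s,x)-{\cal J}^{(c)}(s,x,V_{\pi,1}(s,x)))dW(s)$, multiply by $\Delta^\pi W_{j_0}=\int_{t_{j_0-1}}^{t_{j_0}}dW(r)$, and condition on ${\cal F}_{t_{j_0-1}}$. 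The ${\cal F}_{t_{j_0-1}}$-measurable term $V^{(c)}_{\pi,1}(t_{j_0-1},x)$ contributes nothing, since the increment $\Delta^\pi W_{j_0}$ is centered and independent of ${\cal F}_{t_{j_0-1}}$, while the cross product of the two stochastic integrals is evaluated componentwise by the It\^o isometry: only the $j$th component of the integrand correlates with $dW_j$, producing $E[\int_{t_{j_0-1}}^{t_{j_0}}(\bar V^{(c)}_{\pi,1}(s,x)-{\cal J}^{(c)}(s,x,V_{\pi,1}(s,x)))\,ds\mid{\cal F}_{t_{j_0-1}}]$. Dividing by $\Delta^\pi_{j_0}$ and restoring the term ${\cal J}^{(c)}(t_{j_0-1},x,V_\pi(t_{j_0-1},x))$, which by \eq{equalI} equals ${\cal J}^{(c)}(t_{j_0-1},x,V_{\pi,1}(t_{j_0-1},x))$, gives \eq{equalII} exactly.

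The main obstacle is the bookkeeping in this last step: one must keep the $R^{q\times d}$ matrix structure straight, track the decoupling of the $d$ Brownian components so that the It\^o isometry leaves only the diagonal $j$th-component contribution, and confirm that the $\Delta^\pi_{j_0}$ scaling of the ${\cal L}$-term is precisely what folds the drift into the modified terminal condition $H_{\pi,1}$. By contrast the existence, uniqueness, and the martingale identity underlying \eq{equalI} are immediate consequences of Theorem~\ref{bsdeyI} together with the absence of a $ds$-drift in \eq{IIalgorithmIm}.
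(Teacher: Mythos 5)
Your proposal is correct, and its treatment of existence/uniqueness and of \eq{equalI} coincides with the paper's: both invoke Theorem~\ref{bsdeyI} for the cell problem with terminal datum $H^{(c)}_{i_1...i_p,\pi,1}(t_{j_0},x)$, and both obtain \eq{equalI} by conditioning \eq{IIalgorithmIm} on ${\cal F}_t$ so that the stochastic integral drops out. Where you genuinely depart from the paper is the identity \eq{equalII}. The paper's route is Malliavin-theoretic: it first uses Lemma~\ref{masolution} to place $H_{\pi,1}(t_{j_0},\cdot)$ in ${\cal D}^{1,2}_{\infty}$, then applies the Clark--Haussmann--Ocone representation (Lemma~\ref{chocone}), the Malliavin integration-by-parts formula to convert $E\left[H_{\pi,1}\Delta^{\pi}W_{j_0}\,\middle|\,{\cal F}_{t_{j_0-1}}\right]$ into $E\left[\int_{t_{j_0-1}}^{t_{j_0}}E\left[D_sH_{\pi,1}\,\middle|\,{\cal F}_s\right]ds\,\middle|\,{\cal F}_{t_{j_0-1}}\right]$, and finally an It\^o-isometry computation (the chain \eq{fmalliavin}) to identify $E\left[D_sH_{\pi,1}\,\middle|\,{\cal F}_s\right]$ with $\bar V^{(c)}_{\pi,1}(s,x)-{\cal J}^{(c)}(s,x,V_{\pi,1}(s,x))$. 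You instead read off from \eq{IIalgorithmIm} the representation $H^{(c)}_{\pi,1}(t_{j_0},x)=V^{(c)}_{\pi,1}(t_{j_0-1},x)+\int_{t_{j_0-1}}^{t_{j_0}}\bigl(\bar V^{(c)}_{\pi,1}-{\cal J}^{(c)}\bigr)dW(s)$, multiply by $\Delta^{\pi}W_{j_0}$, and use the product rule for $L^2$ martingales (the cross-variation $[W_i,W_j]=\delta_{ij}t$ leaving only the diagonal contribution) to get the same conditional expectation of a time integral; no Malliavin calculus is used at all. Your route is more elementary and self-contained for this lemma, requiring only square-integrability of the solution rather than ${\cal D}^{1,2}_{\infty}$-membership of the terminal datum; the paper's heavier route has the advantage of reusing machinery (Lemmas~\ref{chocone} and~\ref{masolution}) that it must develop anyway for the rest of the convergence analysis. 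Two small points to tighten: justify that the cross terms $\int X\,dW_j$ and $\int Y\,f\,dW$ in the product-rule step are true martingales (this follows from the $L^2$ bounds on $(V_{\pi,1},\bar V_{\pi,1})$, exactly where the Jensen/H\"older care in the paper's \eq{fmalliavin} lives), and note that your final substitution ${\cal J}^{(c)}(t_{j_0-1},x,V_{\pi}(t_{j_0-1},x))={\cal J}^{(c)}(t_{j_0-1},x,V_{\pi,1}(t_{j_0-1},x))$ rests on reading the $V_{\pi}$ in \eq{cIIIalgorithmI} as $V_{\pi,0}$ and then applying \eq{equalI} --- the same notational identification the paper itself makes.
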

\begin{proof}
Without loss of generality, we only consider the case that $c=0$.
Owing to conditions \eq{termI}, \eq{blipschitz}-\eq{blipschitzI},
and Theorem 2.1, we know that there is a unique adapted and
mean-square integrable solution
$(V_{\pi,1}(t,x),\bar{V}_{\pi,1}(t,x))$ to the B-SPDE in
\eq{IIalgorithmIm} over $t\in[t_{j_{0}-1},t_{j_{0}})$ and $x\in D$
for each $j_{0}\in\{n_{0},n_{0}-1,...,1\}$. Then, by taking the
conditional expectations on both sides of \eq{IIalgorithmIm} at each
time $t$ and the independent increment property of the Brownian
motion, we know that the claim in \eq{equalI} is true by a backward
induction method in terms of $j_{0}=n_{0},n_{0}-1,...,1$.

Now, it follows from Lemma~\ref{masolution} that
$H_{\pi,1}(t_{j_{0}},x)\in{\cal D}^{1,2}_{\infty}\bigcap
L^{2}_{{\cal F}_{t_{j_{0}}}}(\Omega,C^{\infty}(D,R^{q}))$ for each
$j_{0}\in\{n_{0},n_{0}-1,...,1\}$. Then, for each $c\in\{0,1,...\}$,
$x\in D$, and $(i_{1},...,i_{p})\in{\cal I}^{c}$, it follows from
Lemma~\ref{chocone} that
\begin{eqnarray}
&&H_{i_{1}...i_{p},\pi,1}^{(c)}(t_{j_{0}},x)
=E\left[H_{i_{1}...i_{p},\pi,1}^{(c)}(t_{j_{0}},x)\right]
+\int_{0}^{t_{j_{o}}}E\left[\left.D_{t}H_{i_{1}...i_{p},\pi,1}^{(c)}(t_{j_{0}},x)\right|{\cal
F}_{t}\right]dW(t) \elabel{clarkoconeI}
\end{eqnarray}
with
\begin{eqnarray}
E\left[\left|E\left[\left.D_{t}H_{i_{1}...i_{p},\pi,1}^{(c)}(t_{j_{0}},x)\right|{\cal
F}_{t}\right]\right|^{2}dt\right]<\infty.\nonumber
\end{eqnarray}
Thus, we know that
\begin{eqnarray}
E\left[H_{i_{1}...i_{p},\pi,1}^{(c)}(t_{j_{0}},x)\right]
&=&E\left[\left.H_{i_{1}...i_{p},\pi,1}^{(c)}(t_{j_{0}},x)\right|{\cal
F}_{t_{j_{0}-1}}\right] \elabel{clarkoconeII}\\
&&-\int_{0}^{t_{j_{0}-1}}
E\left[\left.D_{s}H_{i_{1}...i_{p},\pi,1}^{(c)}(t_{j_{0}},x)\right|{\cal
F}_{s}\right]dW(s). \nonumber
\end{eqnarray}
Hence, it follows from \eq{clarkoconeI} and \eq{clarkoconeII} that
\begin{eqnarray}
H_{i_{1}...i_{p},\pi,1}^{(c)}(t_{j_{0}},x)
&=&E\left[\left.H_{i_{1}...i_{p},\pi,1}^{(c)}(t_{j_{0}},x)\right|{\cal
F}_{t_{j_{0}-1}}\right]\elabel{clarkoconeIII}\\
&&+\int_{t_{j_{0}-1}}^{t_{j_{0}}}
E\left[\left.D_{s}H_{i_{1}...i_{p},\pi,1}^{(c)}(t_{j_{0}},x)\right|{\cal
F}_{s}\right]dW(s). \nonumber
\end{eqnarray}
Now, for any set $A\in{\cal F}_{t_{j_{0}-1}}$, we have
\begin{eqnarray}
E\left[H_{i_{1}...i_{p},\pi,1}^{(c)}(t_{j_{0}},x)\Delta^{\pi}W_{j_{0}}I_{A}
\right]
&=&E\left[H_{i_{1}...i_{p},\pi,1}^{(c)}(t_{j_{0}},x)\int_{t_{j_{0}-1}}^{t_{j_{0}}}
I_{A}dW(s)\right]\elabel{calII}\\
&=&E\left[\int_{t_{j_{0}-1}}^{t_{j_{0}}}I_{A}D_{s}H_{i_{1}...i_{p},\pi,1}^{(c)}(t_{j_{0}},x)ds\right]
\nonumber\\
&=&E\left[\int_{t_{j_{0}-1}}^{t_{j_{0}}}I_{A}E\left[\left.D_{s}H_{i_{1}...i_{p},\pi,1}^{(c)}(t_{j_{0}},x)
\right|{\cal F}_{s}\right]ds\right]
\nonumber\\
&=&E\left[I_{A}\int_{t_{j_{0}-1}}^{t_{j_{0}}}
E\left.\left[D_{s}H_{i_{1}...i_{p},\pi,1}^{(c)}(t_{j_{0}},x)\right|{\cal
F}_{s}\right]ds\right],\nonumber
\end{eqnarray}
where the second equality is obtained from the Malliavin integration
by parts formula (see, e.g., Theorem A.3.9 in page 283 of Biagini
{\em et al.}~\cite{biahu:stocal}) and the third equality follows
from the tower property for conditional expectations owing to the
square integrability and the Fubini's theorem. Hence, it follows
from the definition of conditional expectation that
\begin{eqnarray}
&&\;\;E\left[\left.H_{i_{1}...i_{p},\pi,1}^{(c)}(t_{j_{0}},x)\Delta^{\pi}W_{j_{0}}
\right|{\cal
F}_{t_{j_{0}-1}}\right]=E\left[\left.\int_{t_{j_{0}-1}}^{t_{j_{0}}}
E\left.\left[D_{s}H_{i_{1}...i_{p},\pi,1}^{(c)}(t_{j_{0}},x)\right|{\cal
F}_{s}\right]ds\right|{\cal F}_{t_{j_{0}-1}}\right].\elabel{calI}
\end{eqnarray}
Therefore we have
\begin{eqnarray}
&&\left|E\left[\left.\int_{t_{j_{0}-1}}^{t_{j_{0}}}
\left(E\left[\left.D_{s}H_{i_{1}...i_{p},\pi,1}^{(c)}(t_{j_{0}},x)\right|{\cal
F}_{s}\right]\right.\right.\right.\right.\elabel{fmalliavin}\\
&&\;\;\;\;\;\;\;\;\;\;\;\;\;\;\;\;\;
\left.\left.\left.\left.-\left(\bar{V}^{(c)}_{i_{1}...i_{p},\pi,1}(s,x)-{\cal
J}^{(c)}_{i_{1}...i_{p}}(s,x,V_{\pi,1}(s,x))\right)\right)ds\right|{\cal
F}_{t_{j_{0}-1}}\right]\right|\nonumber\\
&\leq&\left(E\left[\left.\int_{t_{j_{0}-1}}^{t_{j_{0}}}
\left(E\left[\left.D_{s}H_{i_{1}...i_{p},\pi,1}^{(c)}(t_{j_{0}},x)\right|{\cal
F}_{s}\right]\right.\right.\right.\right.\nonumber\\
&&\;\;\;\;\;\;\;\;\;\;\;\;\;\;\;\;\;
\left.\left.\left.\left.-\left(\bar{V}^{(c)}_{i_{1}...i_{p},\pi,1}(s,x)-{\cal
J}^{(c)}_{i_{1}...i_{p}}(s,x,V_{\pi,1}(s,x))\right)\right)^{2}ds\right|{\cal
F}_{t_{j_{0}-1}}\right]\right)^{1/2}\nonumber\\
&=&\left(E\left[\left.\left(H_{i_{1}...i_{p},\pi,1}^{(c)}(t_{j_{0}},x)
-E\left[\left.H_{i_{1}...i_{p},\pi,1}^{(c)}(t_{j_{0}},x)\right|{\cal
F}_{t_{j_{0}-1}}\right]\right.\right.\right.\right.\nonumber\\
&&\;\;\;\;\;\;\;\;\;
\left.\left.\left.\left.-\left(H^{(c)}_{i_{1}...i_{p},\pi,1}(t_{j_{0}},x)
-V^{(c)}_{i_{1}...i_{p},\pi,1}(t_{j_{0}-1},x)\right)\right)^{2}
\right|{\cal F}_{t_{j_{0}-1}}\right]\right )^{1/2} \nonumber\\
&=& 0,\nonumber
\end{eqnarray}
where the first inequality in \eq{fmalliavin} follows from the
H\"{o}lder's and Jensen's inequalities; the first equality in
\eq{fmalliavin} follows from \eq{IIalgorithmIm}, \eq{clarkoconeIII},
and the It$\hat{o}$'s isometry; the second inequality in
\eq{fmalliavin} follows from \eq{IIalgorithmIm} and \eq{equalI}.
Thus, it follows from \eq{calI}-\eq{fmalliavin},
\eq{cIIIalgorithmI}, and \eq{equalI} that \eq{equalII} is true.
Hence, we finish the proof of Lemma~\ref{clarklemma}. $\Box$
\end{proof}

\subsection{Proof of Theorem~\ref{errorIth}}\label{proofcon}

First, we note that the convention given in Remark~\ref{firstremark}
will be employed in the following proof. Then, for each
$t\in[t_{j_{0}-1},t_{j_{0}})$, $x\in{\cal X}$, and
$c\in\{0,1,...,M\}$, we can obtain that
\begin{eqnarray}
E\left[\left\|\Delta V^{(c)}(t,x)\right\|^{2}\right] &\leq&
5E\left[\left\|V^{(c)}(t,x)-V^{(c)}(t_{j_{0}-1},x)\right\|^{2}\right]
\elabel{vestimate}\\
&&+5E\left[\left\|V^{(c)}(t_{j_{0}-1},x)-V^{(c)}_{\pi,0}(t_{j_{0}-1},x)\right\|^{2}\right]
\nonumber\\
&&+5E\left[\left\|V^{(c)}_{\pi,0}(t_{j_{0}-1},x)-V^{(c)}_{\pi,1}(t_{j_{0}-1},x)\right\|^{2}\right]
\nonumber\\
&&+5E\left[\left\|V^{(c)}_{\pi,1}(t_{j_{0}-1},x)-V^{(c)}_{\pi,1}(t,x)\right\|^{2}\right]
\nonumber\\
&&+5E\left[\left\|V^{(c)}_{\pi,1}(t,x)-V^{(c)}_{\pi}(t,x)\right\|^{2}\right],
\nonumber
\end{eqnarray}
which implies that there is some nonnegative constant $K_{0}$ such
that
\begin{eqnarray}
&&E\left[\left\|\Delta V^{(c)}(t,x)\right\|^{2}\right] \leq
K_{0}\pi.\elabel{firstclaim}
\end{eqnarray}

In fact, for the first and fourth terms on the right-hand side of
\eq{vestimate}, it follows from \eq{vbarvineI} in
Lemma~\ref{prioriest} that there is some nonnegative constant
$K_{1}$ such that
\begin{eqnarray}
&&E\left[\left\|V^{(c)}(t,x)-V^{(c)}(t_{j_{0}-1},x)\right\|^{2}\right]\leq
K_{1}\pi, \elabel{firstterm}\\
&&E\left[\left\|V^{(c)}_{\pi,1}(t_{j_{0}-1},x)-V^{(c)}_{\pi,1}(t,x)\right\|^{2}\right]\leq
K_{1}\pi. \elabel{fourthterm}
\end{eqnarray}

For the third term on the right-hand side of \eq{vestimate}, it
follows from \eq{equalI} in Lemma~\ref{clarklemma} that
\begin{eqnarray}
&&E\left[\left\|V^{(c)}_{\pi,0}(t_{j_{0}-1},x)-V^{(c)}_{\pi,1}(t_{j_{0}-1},x)\right\|^{2}\right]=0\leq
K_{1}\pi.\elabel{thirdterm}
\end{eqnarray}

For the second term on the right-hand side of \eq{vestimate}, it
follows from \eq{cIalgorithmI}-\eq{cIIalgorithmI},
\eq{blipschitz}-\eq{blipschitzo}, the Jensen's inequality, and
\eq{vbarvineI}-\eq{barvbarvineI} that
\begin{eqnarray}
&&E\left[\left\|V^{(c)}(t_{j_{0}-1},x)-V^{(c)}_{\pi,0}(t_{j_{0}-1},x)\right\|^{2}\right]
\elabel{secondterm}\\
&\leq&\bar{K_{2}}\int_{t_{j_{0}-1}}^{t_{j_{0}}}\left(\left\|V(s)-V(t_{j_{0}})\right\|^{2}_{C^{k+c}(D,q)}
+\left\|\bar{V}(s)-\bar{V}(t_{j_{0}})\right\|^{2}_{C^{k+c}(D,qd)}\right)ds \nonumber\\
&\leq&K_{2}\pi,\nonumber
\end{eqnarray}
where $\bar{K}_{2}$ and $K_{2}$ are some nonnegative constants.

For the last term on the right-hand side of \eq{vestimate}, it
follows from \eq{equalI} in Lemma~\ref{clarklemma},
Lemma~\ref{prioriest}, and Taylor's Theorem that
\begin{eqnarray}
&&E\left[\left\|V^{(c)}_{\pi,1}(t,x)-V^{(c)}_{\pi}(t,x)\right\|^{2}\right]
\elabel{fifthterm}\\
&\leq&E\left[\left\|V^{(c)}_{\pi,0}(t,x)-V^{(c)}(t,x)\right\|^{2}\right]
+E\left[\left\|V^{(c)}(t,x)-V^{(c)}(t,\xi(x))\right\|^{2}\right]
\nonumber\\
&\leq&\bar{K_{2}}\int_{t}^{t_{j_{0}}}
\left(\left\|V(s)-V(t_{j_{0}})\right\|^{2}_{C^{k+c}(D,q)}
+\left\|\bar{V}(s)-\bar{V}(t_{j_{0}})\right\|^{2}_{C^{k+c}(D,qd)}\right)ds
\nonumber\\
&&+\bar{K}_{3}\pi E\left[\left\|V^{(c+1)}(t,\xi_{1}(x))\right\|^{2}\right]
\nonumber\\
&\leq&K_{3}\pi,
\nonumber
\end{eqnarray}
where $\bar{K}_{3}$ and $K_{3}$ are some nonnegative constants,
$\xi(x)$ and $\xi_{1}(x)$ along each sample path are in some small
neighborhoods centered at $x$. Therefore, it follows from
\eq{firstterm}-\eq{fifthterm} that the claim in \eq{firstclaim} is
true.

Furthermore, for each $t\in[t_{j_{0}-1},t_{j_{0}})$, $x\in{\cal X}$,
and $c\in\{0,1,...,M\}$, we have that
\begin{eqnarray}
E\left[\left\|\Delta\bar{V}^{(c)}(t,x)\right\|^{2}\right]
&\leq&5E\left[\left\|\bar{V}^{(c)}(t,x)-\bar{V}^{(c)}(t_{j_{0}-1},x)\right\|^{2}\right]
\elabel{secondest}\\
&&+5E\left[\left\|\bar{V}^{(c)}(t_{j_{0}-1},x)-\bar{V}^{(c)}_{\pi,0}(t_{j_{0}-1},x)\right\|^{2}
\right]\nonumber\\
&&+5E\left[\left\|\bar{V}^{(c)}_{\pi,0}(t_{j_{0}-1},x)-\bar{V}^{(c)}_{\pi,1}(t_{j_{0}-1},x)\right\|^{2}
\right]\nonumber\\
&&+5E\left[\left\|\bar{V}^{(c)}_{\pi,1}(t_{j_{0}-1},x)-\bar{V}^{(c)}_{\pi,1}(t,x)\right\|^{2}
\right]\nonumber\\
&&+5E\left[\left\|\bar{V}^{(c)}_{\pi,1}(t,x)-\bar{V}^{(c)}_{\pi}(t,x)\right\|^{2}\right],
\nonumber
\end{eqnarray}
which implies that there is some nonnegative constant $\bar{K}_{0}$
such that
\begin{eqnarray}
&&E\left[\left\|\Delta\bar{V}^{(c)}(t,x)\right\|^{2}\right] \leq
\bar{K}_{0}\pi.\elabel{secondclaim}
\end{eqnarray}

In fact, for the first and fourth terms on the right-hand side of
\eq{secondest}, it follows from \eq{barvbarvineI} in
Lemma~\ref{prioriest} that there is some nonnegative constant
$\kappa_{1}$ such that
\begin{eqnarray}
&&E\left[\left\|\bar{V}^{(c)}(t,x)-\bar{V}^{(c)}(t_{j_{0}-1},x)\right\|^{2}\right]
\leq\kappa_{1}\pi,
\elabel{sfirstterm}\\
&&E\left[\left\|\bar{V}^{(c)}_{\pi,1}(t_{j_{0}-1},x)
-\bar{V}^{(c)}_{\pi,1}(t,x)\right\|^{2}\right]\leq\kappa_{1}\pi.
\elabel{sfourthterm}
\end{eqnarray}

For the third term on the right-hand side of \eq{secondest}, note
that
\begin{eqnarray}
&&E\left[\left(X-E\left[X\left.|{\cal
F}_{t_{j_{0}-1}}\right.\right]\right)^{2}\right]\leq
E\left[\left(X-Y\right)^{2}\right]
\elabel{simine}
\end{eqnarray}
for any two $L^{2}_{{\cal F}_{t_{j_{0}}}}(P)$-integrable random
variables $X$ and $Y$. Then, it follows from \eq{equalII} in
Lemma~\ref{clarklemma}, the H$\ddot{o}$lder's inequality,
\eq{simine}, \eq{sfourthterm}, \eq{blipschitzo}, and \eq{fourthterm}
that
\begin{eqnarray}
&&E\left[\left\|\bar{V}^{(c)}_{\pi,0}(t_{j_{0}-1},x)
-\bar{V}^{(c)}_{\pi,1}(t_{j_{0}-1},x)\right\|^{2}\right]
\elabel{sthirdterm}\\
&\leq&\frac{2}{\Delta^{\pi}_{j_{0}}}
E\left[\int_{t_{j_{0}-1}}^{t_{j_{0}}}
\left\|\left(\bar{V}^{(c)}_{\pi,1}(t_{j_{0}-1},x)-{\cal
J}^{(c)}(x,V_{\pi,1}(t_{j_{0}-1},x)\right)\right.\right.
\nonumber\\
&&\;\;\;\;\;\;\;\;\;\;\;\;\;\;\;\;\;\;\;\;\;\;\;
\left.\left.-\left(\bar{V}^{(c)}_{\pi,1}(s,x)-{\cal
J}^{(c)}(x,V_{\pi,1}(s,x)\right)\right\|^{2}ds\right]
\nonumber\\
&&+\frac{2}{\Delta^{\pi}_{j_{0}}}
E\left[\int_{t_{j_{0}-1}}^{t_{j_{0}}}
\left\|\left(\bar{V}^{(c)}_{\pi,1}(s,x)-{\cal
J}^{(c)}(x,V_{\pi,1}(s,x)\right)\right.\right.
\nonumber\\
&&\;\;\;\;\;\;\;\;\;\;\;\;\;\;\;\;\;\;\;\;\;\;\;\;
\left.\left.-E\left[\left.\int_{t_{j_{0}-1}}^{t_{j_{0}}}\left(\bar{V}^{(c)}_{\pi,1}(s,x)-{\cal
J}^{(c)}(x,V_{\pi,1}(s,x)\right)\right|{\cal
F}_{t_{j_{0}-1}}\right]\right\|^{2}ds\right]
\nonumber\\
&\leq&\frac{4}{\Delta^{\pi}_{j_{0}}}E\left[\int_{t_{j_{0}-1}}^{t_{j_{0}}}
\left\|\left(\bar{V}^{(c)}_{\pi,1}(t_{j_{0}-1},x)-{\cal
J}^{(c)}(x,V_{\pi,1}(t_{j_{0}-1},x)\right)\right.\right.
\nonumber\\
&&\;\;\;\;\;\;\;\;\;\;\;\;\;\;\;\;\;\;\;\;\;
\left.\left.-\left(\bar{V}^{(c)}_{\pi,1}(s,x)-{\cal
J}^{(c)}(x,V_{\pi,1}(s,x)\right)\right\|^{2}ds\right]
\nonumber\\
&\leq&\kappa_{1}\pi
\nonumber
\end{eqnarray}
for some nonnegative constant $\kappa_{1}$.

For the second term on the right-hand side of \eq{secondest}, it
follows from \eq{sthirdterm}, the special form of the terminal
variable in \eq{IIalgorithmIm} ,
Lemmas~\ref{lemmasm}-\ref{prioriest}, and the proof of the first
three terms in \eq{vestimate} that
\begin{eqnarray}
&&E\left[\left\|\bar{V}^{(c)}(t_{j_{0}-1},x)-\bar{V}^{(c)}_{\pi,0}(t_{j_{0}-1},x)\right\|^{2}\right]
\elabel{ssecondterm}\\
&\leq&2\left(E\left[\left\|\bar{V}^{(c)}(t_{j_{0}-1},x)-\bar{V}^{(c)}_{\pi,1}(t_{j_{0}-1},x)\right\|^{2}\right]
+E\left[\left\|\bar{V}^{(c)}_{\pi,1}(t_{j_{0}-1},x)-\bar{V}^{(c)}_{\pi,0}(t_{j_{0}-1},x)\right\|^{2}\right]\right)
\nonumber\\
&\leq&2\int_{t_{j_{0}-1}}^{t_{j_{0}}}E\left[\left\|\bar{V}^{(c)}(s,x)
-\bar{V}^{(c)}_{\pi,1}(s,x)\right\|^{2}\right]ds
+\bar{\kappa}_{2}\int_{t_{j_{0}-1}}^{t_{j_{0}}}E\left[\left\|V^{(c)}(s,x)-
V^{(c)}_{\pi,1}(s,x)\right\|^{2}\right]ds
\nonumber\\
&&+\bar{\kappa}_{2}\pi
\nonumber\\
&\leq&\kappa_{2}\pi,\nonumber
\end{eqnarray}
where $\kappa_{2}$ and $\bar{\kappa}_{2}$ are some nonnegative
constants.

For the last term on the right-hand side of \eq{secondest}, it
follows from Lemma~\ref{prioriest}, Taylor's Theorem, and the proof
in \eq{ssecondterm} that
\begin{eqnarray}
&&E\left[\left\|\bar{V}^{(c)}_{\pi,1}(t,x)-\bar{V}^{(c)}_{\pi}(t,x)\right\|^{2}\right]
\elabel{sfifthterm}\\
&\leq&E\left[\left\|\bar{V}^{(c)}_{\pi,1}(t,x)-\bar{V}^{(c)}(t,x)\right\|^{2}\right]
+E\left[\left\|\bar{V}^{(c)}(t,x)-\bar{V}^{(c)}(t,\xi(x))\right\|^{2}\right]
\nonumber\\
&\leq&\bar{\kappa}_{2}\pi+\bar{\kappa}_{3}\pi E\left[\left\|V^{(c+1)}(t,\xi_{1}(x))\right\|^{2}\right]
\nonumber\\
&\leq&\kappa_{3}\pi,\nonumber
\end{eqnarray}
where $\bar{\kappa}_{3}$ and $\kappa_{3}$ are some nonnegative
constants, $\xi(x)$ and $\xi_{1}(x)$ along each sample path are in
some small neighborhoods centered at $x$. Therefore, it follows from
\eq{sfirstterm}-\eq{sfifthterm} that the claim in \eq{secondclaim}
is true.

Finally, the claim in \eq{errorest} for Algorithm~\ref{algorithmI}
follows from \eq{firstclaim} and \eq{secondclaim}. Hence, we finish
the proof of Theorem~\ref{errorIth}. $\Box$





\end{document}